\theoremstyle{plain}
\newtheorem{theorem}{Theorem}[section]
\newtheorem{corollary}[theorem]{Corollary}
\newtheorem{lemma}[theorem]{Lemma}
\newtheorem{proposition}[theorem]{Proposition}
\theoremstyle{definition}
\theoremstyle{remark}
\newtheorem{remark}[theorem]{Remark}
\newcommand{\N}{\mathbb{N}}
\newcommand{\Z}{\mathbb{Z}}
\newcommand{\R}{\mathbb{R}}
\newcommand{\floor}[1]{{\left\lfloor #1 \right\rfloor}}
\DeclareMathOperator{\1}{\mathbbm{1}}
\numberwithin{equation}{section}
\DeclareMathOperator{\E}{\mathbf{E}}
\newcommand{\rmE}{\mathbb{E}}
\renewcommand{\P}{\mathbf{P}}
\newcommand{\rmP}{\mathbb{P}}
\DeclareMathOperator{\Var}{\mathbf{V}\mathrm{ar}}
\DeclareMathOperator{\rmVar}{\mathbb{V}\mathrm{ar}}
\DeclareMathOperator{\rmCov}{\mathbb{C}\mathrm{ov}}
\newcommand{\calL}{\mathcal{L}}
\newcommand{\calK}{\mathcal{K}}
\newcommand{\dd}{\mathrm{d}}
\renewcommand{\bar}[1]{\overline{#1}}
\renewcommand{\hat}[1]{\widehat{#1}}
\renewcommand{\rho}{\varrho}
\renewcommand{\epsilon}{\varepsilon}
\title{Late levels of nested occupancy scheme in random environment}
\author{Alexander Iksanov\thanks{\texttt{iksan@univ.kiev.ua}, Taras Shevchenko National University of Kyiv, Kyiv-01033, Ukraine} \and Bastien Mallein\thanks{\texttt{mallein@math.univ-paris13.fr}, Université Sorbonne Paris Nord, LAGA, UMR 7539, F-93430, Villetaneuse, France}}
\date{\today}
\begin{document}

\maketitle

\begin{abstract}
Consider a weighted branching process generated by a point process on $[0,1]$, whose atoms sum up to one. Then the weights of all individuals in any given generation sum up to one, as well. We define a nested occupancy scheme in random environment as the sequence of balls-in-boxes schemes (with random probabilities) in which boxes of the $j$th level, $j=1,2,\ldots$ are identified with the $j$th generation individuals and the hitting probabilities of boxes are identified with the corresponding weights. The collection of balls is the same for all generations, and each ball starts at the root and moves along the tree of the weighted branching process according to the following rule: transition from a mother box to a daughter box occurs with probability given by the ratio of the daughter and mother weights.

Assuming that there are $n$ balls, we give a full classification of regimes of the a.s.\ convergence for the number of occupied (ever hit) boxes in the $j$th level, properly normalized, as $n$ and $j=j_n$ grow to $\infty$. Here, $(j_n)_{n\in\N}$ is a sequence of positive numbers growing proportionally to $\log n$. We call such levels late, for the nested occupancy scheme gets extinct in the levels which grow super-logarithmically in $n$ in the sense that each occupied box contains one ball. Also, in some regimes we prove the strong laws of large numbers (a) for the number of the $j$th level boxes which contain at least $k$ balls, $k\geq 2$ and (b) under the assumption that the mean number of the first level boxes is finite, for the number of empty boxes in the $j$th level.
\end{abstract}

\section{Introduction}\label{sec:intro}

\subsection{Definition of the model}\label{subsec:definition}

Let $P_1$, $P_2,\ldots$ be nonnegative random variables with an arbitrary joint distribution satisfying $\sum_{k\geq 1}P_k=1$ almost surely (a.s.). We interpret the sequence $(P_k)_{k\in\N}$ as a random environment. In the {\it occupancy scheme in random environment}, conditionally on
$(P_k)_{k\in\N}$, balls are thrown independently into an array of boxes $1$, $2,\ldots$ with probability $P_k$ of hitting box $k$. To exclude the trivial occupancy scheme with one ball, we always assume that the mean number of positive (nonzero) probabilities is larger than one.

We call the model {\it infinite occupancy scheme in random environment} provided that the number of positive probabilities is infinite a.s. When the random probabilities $(P_k)_{k\in\N}$ follow a residual allocation model
\begin{equation}\label{BS}
P_r:=W_1W_2\cdot\ldots\cdot W_{r-1}(1-W_r),\quad r\in\N,
\end{equation}
where $W_1$, $W_2,\ldots$ are independent copies of a random variable $W$ taking values in $(0,1)$, the corresponding infinite occupancy scheme is called {\it Bernoulli sieve}. This scheme was introduced in \cite{Gnedin:2004} and further investigated in many articles. See \cite{Gnedin+Iksanov+Marynych:2010} and \cite{Iksanov:2016} for the surveys and \cite{Alsmeyer+Iksanov+Marynych:2017, Duchamps+Pitman+Tang:2019, Iksanov+Jedidi+Bouzeffour:2017, Pitman+Tang:2019} for recent contributions. A list of other popular random environments can be found in \cite{Gnedin+Iksanov:2020}.

A {\it nested occupancy scheme in random environment} is a sequence of occupancy schemes in random environments constructed as a successive refinement of the partition of balls. The model was introduced in \cite{Bertoin:2008}. The other contributions that we are aware of are \cite{Buraczewski+Dovgay+Iksanov:2020, Gnedin+Iksanov:2020, Iksanov+Marynych+Samoilenko:2020, Joseph:2011}. Following \cite{Bertoin:2008} and \cite{Gnedin+Iksanov:2020}, we now provide the definition of the nested scheme. An important ingredient of the construction is a weighted branching process with positive weights which is nothing else but a multiplicative counterpart of a branching random walk.

Let $\mathbb{V}=\cup_{n\in\N_0}\N^n$ be the set of all possible
individuals of some population, encoded with the Ulam-Harris notation, where $\N_0:=\N\cup\{0\}$. The ancestor is identified with
the empty word $\varnothing$ and its weight is $P(\varnothing)=1$.
On some probability space $(\Omega, \mathcal{F}, \P)$ let
$((P_k(u))_{k\in\N})_{u\in \mathbb{V}}$ be a family of independent copies of $(P_k)_{k\in\N}$. An individual $u=u_1\ldots u_j$ of the $j$th generation whose weight is denoted by
$P(u)$ produces an infinite number of offspring residing in the $(j+1)$th generation. The
offspring of the individual $u$ are enumerated by $uk = u_1 \ldots u_j k$, where $k\in \N$, and the weights of the offspring are denoted by $P(uk)$. It is postulated that $P(uk)=P(u)P_k(u)$.
Observe that, for each $j\in\N$, $\sum_{|u|=j} P(u)=1$ a.s., where, by convention, $|u|=j$ means that the sum is taken over all individuals of the $j$th generation. For $j\in\N$, denote by
$\mathcal{F}_j$ the $\sigma$-algebra generated by the weights $(P(u))_{|u| \leq j}$.

The nested occupancy scheme in random environment is then constructed as follows. For each $j \in \N$, the set of the $j$th level boxes is identified with the set $\{u\in \mathbb{V} : |u|=j\}$ of individuals in the $j$th generation, and the weight of box $u$ is given by $P(u)$. The collection of balls is the same for all levels. The balls are allocated, conditionally on $(P(u))_{u\in\mathbb{V}}$, in the following fashion. At each time $n \in \N$, a new ball arrives and falls, independently on the $n-1$ previous balls, into box $u$ of the first level with probability $P(u)$. Simultaneously, it falls into the box $up$ of the second level with probability $P(up)/P(u)$, into the box $upq$ with probability $P(upq)/P(up)$, and so one indefinitely. A box is deemed {\it occupied} at time $n$ provided it was hit by a ball on its way over the levels. Observe that restricting attention to one arbitrary level we obtain
the occupancy scheme in the random environment.

This process can be explicitly constructed as follows. For each $j\in\N$, associated to $u \in \mathbb{\N}^j$ is the interval
\[
  I_u = \left[ \sum_{|v|=j, v \prec u} P(v) , \sum_{|v|=j, v\preccurlyeq u} P(v) \right),
\]
where $\prec$ is the alphabetical ordering\footnote{That is, $(a_1,\ldots a_j) \prec (b_1,\ldots b_j)$ if there exists $i\leq j$ such that $a_i < b_i$ and $a_r = b_r$ for all $r<i$.} on $\N^j$, and $u \preccurlyeq v$ means that either $u \prec v$ or $u=v$. Note that, for each $u \in \mathbb{V}$, the interval $I_u$ has length $P(u)$ and $(I_{uk})_{k \in\N}$ forms a partition of $I_u$. More generally, $((I_u)_{|u|=j})_{j \in \N}$ forms a nested sequence of partitions of $[0,1)$.

\begin{figure}[ht]
\centering
\begin{tikzpicture}[xscale = 11]

  %First Generation
  \draw [thick] (0,2) -- (0,0);
  \draw [thick] (0.6,2) -- (0.6,0);
  \draw [thick] (0.8,2) -- (0.8,0);
  \draw [thick] (1,2) -- (1,0);

  \draw [<->] (0,-.3) -- (0.6,-.3);
  \draw (0.3,-.3) node[below] {$I_{1}$};

  \draw [<->] (0.6,-.3) -- (0.8,-.3);
  \draw (0.7,-.3) node[below] {$I_{2}$};

  \draw [<->] (0.8,-.3) -- (1,-.3);
  \draw (0.9,-.3) node[below] {$I_{3}$};

  %Second Generation
  \draw [thick,color=black!60] (0.4,1.33) -- (0.4,0);
  \draw [thick,color=black!60] (0.53,1.33) -- (0.53,0);
  \draw [thick,color=black!60] (0.71,1.33) -- (0.71,0);
  \draw [thick,color=black!60] (0.89,1.33) -- (0.89,0);
  \draw [thick,color=black!60] (0.96,1.33) -- (0.96,0);

  \draw [<->] (0.4,1.52) -- (0.53,1.52);
  \draw (0.465,1.52) node[above] {$I_{12}$};

  %Third Generation
  \draw [thick,color=black!40] (0.19,0.48) -- (0.19,0);
  \draw [thick,color=black!40] (0.31,0.48) -- (0.31,0);
  \draw [thick,color=black!40] (0.45,0.48) -- (0.45,0);
  \draw [thick,color=black!40] (0.48,0.48) -- (0.48,0);
  \draw [thick,color=black!40] (0.68,0.48) -- (0.68,0);
  \draw [thick,color=black!40] (0.77,0.48) -- (0.77,0);
  \draw [thick,color=black!40] (0.86,0.48) -- (0.86,0);
  \draw [thick,color=black!40] (0.99,0.48) -- (0.99,0);

  \draw [<->] (0.6,1.02) -- (0.68,1.02);
  \draw (0.64,1.02) node[above] {$I_{211}$};

  \draw[->,thick] (-0.05,0) -- (1.05,0);
  \draw [thick] (0,-0.2) node[below left] {$0$} -- (0,0.2);
  \draw [thick] (1,-0.2) node[below right] {$1$} -- (1,0.2);

  %Balls
  \draw [->] (0.875,3) node[above] {$U_1$} -- (0.875,1.2);
  \draw [->] (0.35,3) node[above] {$U_2$} -- (0.35,1.2);

\end{tikzpicture}
\caption{Construction of the first three levels of a nested occupancy scheme in random environment. The first ball hits boxes $3$, $31$ and $312$, the second one hits boxes $1$, $11$ and $113$.}
\end{figure}
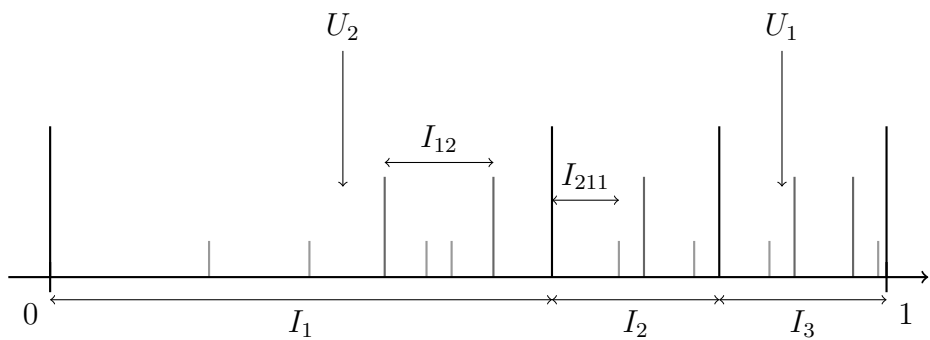

Let $(U_j)_{j\in\N}$ be independent random variables with the uniform distribution on $[0,1]$, which are independent of $(P(u))_{u \in \mathbb{V}}$. For each $n \in \N$, $U_n$ is associated to the ball $n$, which falls simultaneously in all boxes $u$ such that $U_n \in I_u$. Observe that, for all $n \in \N$, the number of balls hitting box $u$ can be written as $\sum_{i=1}^n \1_{\{U_i \in I_u\}}$. For $j, n \in \N$, put
\begin{equation}\label{as const}
  K_n^{(j)}(k):=\sum_{|u|=j}\1_{\left\{\sum_{i=1}^n \1_{\{U_i \in I_u\}} \geq k\right\}},\quad k\in\N.
\end{equation}
In words, $K_n^{(j)}(k)$ is the number of the $j$th level boxes that were hit by at least $k$ balls before time $n$. In particular, $K_n^{(j)}:=K^{(j)}_n(1)$ represents the number of occupied boxes that is, those containing at least one ball in the $j$th level.

Observe that the random variables $K_n^{(j)}(k)$ are constructed consistently for all positive integer $k$, $j$ and $n$. In particular, an immediate consequence of \eqref{as const} is that, with $n$ fixed, the sequence $(K_n^{(j)}(1))_{j \in \N}$ is a.s.\ nondecreasing, and with $j$ and $k$ fixed, the sequence $(K_n^{(j)}(k))_{n \in \N}$ is a.s.\ nondecreasing, too.

The process introduced above may be called a {\it deterministic version} of the nested occupancy scheme in random environment. Let $(N_t)_{t \geq 0}$ be a Poisson process of unit intensity with the arrival times $S_1$, $S_2,\ldots$, so that
\begin{equation*}
N_t=\#\{k\in\N: S_k\leq t\},\quad t\geq 0.
\end{equation*}
Furthermore, we assume that the Poisson process is independent of both $(P(u))_{u \in \mathbb{V}}$ and the sampling. As in much of the previous research on occupancy models, we shall also work with a {\it Poissonized version} in which balls arrive at random times $S_1$, $S_2,\ldots$ rather than $1,2,\ldots$. A key observation is that in the Poissonized nested occupancy scheme at time $t$, conditionally given $(P(u))_{u \in \mathbb{V}}$, the number of balls in box $u$ is given by a Poisson random variable of mean $tP(u)$ which is independent of the number of balls in all boxes $v$ such that $I_u \cap I_v =\emptyset$. Similarly to the deterministic version, we set
\begin{equation}\label{eq:ktdef}
\mathcal{K}_t^{(j)}(k):=K_{N_t}^{(j)}(k),\quad t\geq 0,~~k,j\in\N.
\end{equation}
The so defined random variable represents the number of boxes at time $t$ in the $j$th level of the Poissonized version containing at least $k$ balls. In particular, $\mathcal{K}_t^{(j)}:=\mathcal{K}_t^{(j)}(1)$ represents the number of occupied boxes at time $t$ in the $j$th level of the Poissonized version.

Following \cite{Iksanov+Marynych+Samoilenko:2020}, we call the $j$th level {\it early}, {\it intermediate} or {\it late} depending on whether $j$ is fixed, $j=j_n\to\infty$ and $j_n=o(\log n)$ as $n\to\infty$, or $j$ is of order $\log n$. The asymptotic behavior of $K_n^{(j)}(k)$ in certain late levels was investigated in \cite{Bertoin:2008} and \cite{Joseph:2011}. An extension of some results obtained in these two papers to a multitype version of the nested occupancy scheme in random environment can be found in~\cite{Businger:2017}. According to Theorem 1 in \cite{Joseph:2011}, the nested occupancy schemes in random environment get extinct in a level $j_\star(n)$ of order $a_\star \log n$. This means that in the level $j \geq j_\star(n)$ every ball is in a distinct box, that is, $K^{(n)}_j= n$. This shows that as far as analysis of the random variables $K_n^{(j)}(k)$ is concerned, the aforementioned classification of the levels is complete.

Our purpose is to provide a complete description of regimes of the a.s.\ convergence for $K_n^{(j)}$, properly normalized, in the late levels. Our findings complement the results obtained in \cite{Bertoin:2008}, in which the a.s.\ behavior of $K_n^{(j)}(k)$ was analyzed under the assumption $j_n = a \log n + b + o(1)$ for $a < a_\star$ and $b \in \R$. Additionally, assuming that the mean number of the first level boxes is finite we derive the asymptotic behavior of the number of empty boxes, in the regime when this number is small with respect to the total number of boxes. Last but not least, we investigate the a.s.\ convergence of $K_n^{(j)}(k)$, $k\geq 2$ in some late levels $j$. Our main technical tools are Biggins' local limit type estimate for the Gibbs measure of the branching random walk (Lemma~\ref{fct:biggStep}) and its consequences (Propositions~\ref{prop:bertoinExtended} and~\ref{prop:cltBrw}).

\subsection{Standing assumptions}\label{sect:standing}

Throughout the rest of the paper, we prefer to work with the (additive) branching random walk (BRW, for short) associated to the (multiplicative) weighted branching process
$(P(u))_{u\in\mathbb{V}}$ defined in Section~\ref{subsec:definition}. For $u \in \mathbb{V}$, put $V(u) := - \log P(u)$. By convention, $V(u) = \infty$ if $P(u) = 0$. The process $\mathcal{V} := (V(u))_{u \in \mathbb{V}}$ is a BRW, that is, in this process the daughters of each individual are positioned according to independent copies of a point process, centred around the position of that individual. The value $\infty$ is treated as a cemetery state. Individuals at that position only give birth to particles at $\infty$, and sums over $|u| = j$ ignore individuals with $V(u) = \infty$. Note that the standing assumptions $\sum_{k\geq 1}P_k=1$ a.s.\ and $\E\#\{k\in\N: P_k>0\}>1$ are equivalent to
\begin{equation}\label{eq:1}
\sum_{|u|=1} e^{-V(u)} = 1\quad \text{a.s.~~~and}\quad
\E\Big(\sum_{|u|=1} 1 \Big) > 1.
\end{equation}
We additionally impose a \emph{nonlattice} assumption, namely, for all $a>0$ and $b\in\R$,
\begin{equation}
  \label{eqn:nonLattice}
  \P(V(u) \in a \Z + b\quad\text{for all}~u~\text{with}~~|u|=1) < 1.
\end{equation}

For any $\theta\in\R$, define
\[
L(\theta): = \E\left( \sum_{|u|=1} e^{-\theta V(u)} \right)\quad \text{and} \quad
\lambda(\theta): = \log L(\theta)
\]
($\lambda$ takes values in $(-\infty,\infty]$) and then put
\[
  \underline{\theta} := \inf \{ \theta \in \R : \lambda(\theta) < \infty \}.
\]
The function $\lambda$ is strictly convex and decreasing on its domain, which particularly contains the interval $[1,\infty)$ because $\lambda(1)=0$. 

The {\it speed of BRW} is defined by $v:= - \inf_{\theta > 0} \frac{\lambda(\theta)}{\theta}$. The term is justified by the limit relation
\[
  \lim_{j \to \infty} \frac{1}{j} \min_{|u|=j} V(u) = v \quad \text{a.s.},
\]
see, for instance, Theorem 1.3 on p.~6 in \cite{Shi:2012}. We assume the existence of $\theta^*$ such that
\begin{equation}  \label{eqn:defThetaStar}
  \theta^* \lambda'(\theta^*) - \lambda(\theta^*) = 0.
\end{equation}
Under \eqref{eqn:defThetaStar}, the minimum of $\frac{\lambda(\theta)}{\theta}$ is attained at point $\theta^*$, that is,
\[
  v = -\frac{\lambda(\theta^*)}{\theta^*} = - \lambda'(\theta^*).
\]
Observe that $\lambda'(1) = -\E\sum_{|u|=1} V(u) e^{-V(u)} = \E P(u) \log P(u) \in [-\infty, 0)$, using that $P(u) \in [0,1]$ a.s. for all $|u|=1$ and $\P(P(u) = 1) < 1$. As $\lambda(1) = 0$, we infer that
\begin{equation*}
\underline{\theta} \leq 1 < \theta^* \quad \text{ and } - \lambda'(1) > v > 0.
\end{equation*}
Here, we have also used the fact that $\theta \in [1,\infty) \mapsto - \lambda^\prime(\theta)$ is strictly decreasing and positive, as $\lambda$ is strictly convex and decreasing.

Put
\[
  \lambda^*(a): = \sup_{\theta \in \R} (- \theta a - \lambda(\theta)) = - \inf_{\theta > 0}(\theta a + \lambda(\theta)),\quad a>0
\]
and note that $\lambda^*$ is the Legendre transform of $\theta \mapsto \lambda(-\theta)$. In particular, $\lambda^\ast$ is a convex function. It is a classical observation that, if there exists $\theta > \underline{\theta}$ such that $- \lambda'(\theta)=a$ for some $a>0$, then
\begin{equation}\label{eqn:formCramer}
\lambda^*(a) = -(\theta a + \lambda(\theta)) = \theta \lambda'(\theta) - \lambda(\theta).
\end{equation}
In particular, $\lambda^*(v) = 0$, with $(\lambda^*)'(v) = -\theta^*$. More generally,
\[
  \lambda'(\theta) = -a \iff (\lambda^*)'(a)=-\theta. 
\]

When $\underline{\theta}<0$, define a counterpart of $\theta^\ast$ on the negative halfline by
\begin{equation}\label{eq:thetaast-}
\theta_\ast:=\inf\{\theta\in (\underline{\theta}, 0): \theta\lambda^\prime(\theta)-\lambda(\theta)<0\}
\end{equation}
and put $\bar{a}_-:=\lim_{\theta\to \theta_\ast+}(-\lambda^\prime(\theta))$.
Two other (mutually exclusive) properties of $\lambda^\ast$ which we use later on are the following.

\noindent {\sc Property A}.
Under the assumption $\lambda(0) = \infty$, $\lambda^*$ is a strictly decreasing function. In this situation, we put
$\theta_\ast = \underline{\theta}$.

\noindent {\sc Property B}. Under the assumption $\underline{\theta} < 0$, $\lambda^*$ attains its minimum at point $-\lambda'(0)$. Furthermore, $\lambda^\ast$ is negative on $(-\lambda^\prime(0), \bar{a}_-)$.

For $\theta > \underline{\theta}$, put 
\[
  W_j(\theta)= \sum_{|u|=j} e^{-\theta V(u) - \lambda(\theta)j} \quad \text{and} \quad W(\theta) = \lim_{j \to \infty} W_j(\theta) \quad \text{a.s.}
\]
Observe that $(W_j(\theta),\mathcal{F}_j)_{j \geq 0}$ is a nonnegative martingale, so that the limit random variable $W(\theta)$ is well-defined and non-negative almost surely.

Let $\gamma>1$. Using the convexity of $x\mapsto x^\gamma$ on $\R^+:=[0,\infty)$ we obtain, for $\theta>\underline{\theta}$, $$\Big(\sum_{|u|=1}e^{-\theta V(u)}\Big)^\gamma=\Big(\sum_{|u|=1}e^{-V(u)}e^{-(\theta-1) V(u)}\Big)^\gamma\leq \sum_{|u|=1}e^{-((\theta-1)\gamma+1) V(u)}\quad\text{a.s.}$$ Choosing $\gamma>1$ sufficiently close to $1$ that satisfies $\gamma(\theta-1)+1>\underline{\theta}$, so that particularly $L(\gamma(\theta-1)+1)<\infty$, we infer $\E(W_1(\theta)^\gamma) < \infty$.
Hence, irrespective of whether $\underline{\theta} < 0$ or $\lambda(0) = \infty$, we conclude with the help of Theorem 1 in \cite{Biggins:1992} that
\[
  W(\theta) > 0\quad \text{a.s. for all $\theta \in (\theta_*,\theta^*)$ and} \quad W(\theta) = 0\quad \text{ a.s. for all $\theta > \theta^*$}.
\]
Recall that $\theta_\ast=\underline{\theta}$ if $\lambda(0) = \infty$.

\section{Main results} \label{sec:occupiedBins}

We formulate in this section our main results which are concerned with the almost sure asymptotic behaviour of $K_n^{(j)} := K_n^{(j)}(1)$ the number of occupied boxes in the late levels $j$ when $n$ balls have been thrown. More precisely, we investigate a regime in which $j=j_n$ satisfies
\[
  \log n \sim a j,\quad n\to\infty
\]
for some $a>0$. It turns out that the asymptotic behavior of $K_n^{(j)}$ depends heavily upon the value of $a$, which can be partially explained by the fact that $K_n^{(j)}\approx K_{\floor{e^{aj}}}^{(j)}$, in this setting, is an increasing function in $a$. We call the parameter $a$ the \emph{density of balls} in the $j$th level. We expect that in the levels of low density most balls fall into distinct boxes, whereas in the levels of higher density more and more balls fall into the same box, so that the total number of occupied boxes do not increase any longer proportionally to the number of balls.

Put $\Phi(x):=(2\pi)^{-1}\int_{-\infty}^x \exp(-y^2/2) {\rm d}y$ for $x\in\R$, so that $\Phi$ is the standard normal distribution function.
Put
\begin{equation}\label{eqn:defAstar}
a_* := \begin{cases}
-\frac{\lambda(2)}{2} & \text{if } \theta^*>
2,\\
v =- \frac{\lambda(\theta^*)}{\theta^*} & \text{if } \theta^* \leq 2.
\end{cases}
\end{equation}

Our first main result is Theorem~\ref{thm:occupancy}.
\begin{theorem}\label{thm:occupancy}
Let $a > 0$, $b\in\R$ and fix $\theta > 0$ such that $a = - \lambda'(\theta)$. Also, let $(j_n)_{n\in\N}$ be a sequence of positive integers.

\noindent {\rm (I)} If $a < a_\star$ and $\lim_{n\to\infty} (\log n/j_n)=a$, then $K_n^{(j_n)} = n$ {\rm a.s.} for $n$ large enough.

\noindent {\rm (II)} Let $a_\star < a < a_c := - \lambda'(1)$.

{\rm (A)} If $a< - \lambda'(2)$ and $j_n=a^{-1}\log n+ O((\log n)^{1/2})$ as $n\to\infty$, then
\begin{equation}\label{eq:impo}
K_n^{(j_n)} = n - \frac{W(2)}{2} n^2e^{\lambda(2)j_n} (1 + o(1))\quad \text{{\rm a.s.~~ as}}~~n \to \infty.
\end{equation}

{\rm (B)} If $a=-\lambda'(2)$ and
\begin{equation}\label{eq:jn}
\lim_{n\to\infty} \frac{\log n - a j_n}{(\log n)^{1/2}}=b,
\end{equation}
then as $n \to \infty$,
\begin{equation}\label{eq:impo1}
K_n^{(j_n)} = n -\frac{\Phi\big(-b (a/\lambda''(2))^{1/2}\big)}{2} W(2) n^2 e^{\lambda (2)j_n} (1+o(1))\quad \text{{\rm a.s.}}
\end{equation}

{\rm (C)} If $a > - \lambda'(2)$ and the sequence $(j_n)_{n\in\N}$ satisfies \eqref{eq:jn}, then as $n \to \infty$,
\begin{equation}\label{eq:impo2}
K_n^{(j_n)} = n - \frac{\Gamma(2-\theta)}{\theta(\theta-1)(2\pi \lambda''(\theta))^{1/2}}e^{-ab^2/(2\lambda''(\theta))} W(\theta) \frac{n^\theta e^{\lambda(\theta)j_n}}{j_n^{1/2}}   (1+ o(1))\quad {\rm a.s.}
\end{equation}

\noindent {\rm (III)} If $a = a_c$ and the sequence $(j_n)_{n\in\N}$ satisfies \eqref{eq:jn}, then
$$K_n^{(j_n)} = \Phi\big(-b (a/\lambda''(1))^{1/2}\big)n(1+o(1))\quad {\rm a.s.~~as}~~n \to \infty.$$

\noindent {\rm (IV)} If $a_c < a < \bar{a} := \lim_{\theta \to \max(\underline{\theta},0)} (-\lambda'(\theta))$ and the sequence $(j_n)_{n\in\N}$ satisfies \eqref{eq:jn}, then
\[K_n^{(j_n)} =\frac{\Gamma(1-\theta)}{\theta(2 \pi \lambda''(\theta))^{1/2}}e^{-ab^2/(2\lambda''(\theta))} W(\theta)\frac{n^\theta e^{\lambda(\theta)j_n}}{j_n^{1/2}}(1 + o(1))\quad {\rm a.s.~~as}~~n \to \infty.\]
\end{theorem}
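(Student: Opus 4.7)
The plan is to work throughout in the Poissonized model at time $t=n$ and depoissonize at the end, using the concentration $N_n=n+O(\sqrt{n\log n})$ a.s.\ together with the monotonicity of $n\mapsto K_n^{(j)}$. In the Poissonized scheme the conditional expectations given the environment take the convenient forms
\[
\E\bigl[\mathcal{K}_n^{(j)}\,\big|\,\mathcal{F}_\infty\bigr]=\sum_{|u|=j}\bigl(1-e^{-nP(u)}\bigr),\qquad
\E\bigl[N_n-\mathcal{K}_n^{(j)}\,\big|\,\mathcal{F}_\infty\bigr]=\sum_{|u|=j}\phi(nP(u)),
\]
with $\phi(x):=x-1+e^{-x}$, and Poisson variance bounds give concentration around the conditional means. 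After writing $\log n=aj_n+c\sqrt{j_n}+o(\sqrt{j_n})$ with $c:=b\sqrt{a}$ (so that $c^2=ab^2$ emerges in the limit), the central input is Biggins' local limit estimate for the Gibbs measure of the BRW (Lemma~\ref{fct:biggStep} together with Propositions~\ref{prop:bertoinExtended} and~\ref{prop:cltBrw}): for $\theta\in(\theta_\ast,\theta^\ast)$ and $a=-\lambda'(\theta)$ and suitable test functions $g$,
\[
\sqrt{j}\sum_{|u|=j}e^{-\theta V(u)-j\lambda(\theta)}\,g(V(u)-aj)\xrightarrow[j\to\infty]{\textrm{a.s.}}\frac{W(\theta)}{\sqrt{2\pi\lambda''(\theta)}}\int_\R g(y)\,\diff y,
\]
together with a CLT companion for unbounded test functions. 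The tilting identity $F(V(u))=e^{\theta V(u)+j\lambda(\theta)}\cdot e^{-\theta V(u)-j\lambda(\theta)}F(V(u))$ then reduces every sum appearing in the theorem to a Gaussian integral.

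For part (IV) ($0<\theta<1$), I apply the LLT with $g(y)=e^{\theta y}\bigl(1-\exp(-e^{c\sqrt{j}-y})\bigr)$. The prefactor pulled outside is $e^{j(\lambda(\theta)+\theta a)+\theta c\sqrt{j}}=n^\theta e^{\lambda(\theta)j_n}(1+o(1))$, the Gaussian kernel evaluated near $y=c\sqrt{j}$ contributes $e^{-c^2/(2\lambda''(\theta))}=e^{-ab^2/(2\lambda''(\theta))}$, and the substitution $u=e^{c\sqrt{j}-y}=nP(u)$ reduces the remaining integral to
\[
\int_0^\infty u^{-\theta-1}(1-e^{-u})\,\diff u=\frac{\Gamma(1-\theta)}{\theta}
\]
(integration by parts, valid for $0<\theta<1$); combining gives the claimed formula. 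Part (II)(C) ($1<\theta<2$) is structurally identical but applied to $\sum\phi(nP(u))$; the analogous integral, again via integration by parts, is $\int_0^\infty u^{-\theta-1}\phi(u)\,\diff u=\Gamma(2-\theta)/[\theta(\theta-1)]$.

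The critical cases (III) ($\theta=1$) and (II)(B) ($\theta=2$) do not fit the above scheme because the corresponding integrals diverge at one endpoint; one instead uses the CLT for the tilted Gibbs measure (Proposition~\ref{prop:cltBrw}). For (III), the identity $\mathcal{K}_n^{(j_n)}/n=\sum_{|u|=j_n}e^{-V(u)}\,h(V(u)-\log n)$ with $h(t):=e^t(1-e^{-e^{-t}})$ bounded in $[0,1]$ and satisfying $h(-\infty)=0$, $h(+\infty)=1$ reduces $\mathcal{K}_n^{(j_n)}/n$ to the expectation of a bounded function against the probability measure putting mass $e^{-V(u)}/W_{j_n}(1)$ on $\{|u|=j_n\}$; combined with $W_j(1)\to 1$ and the $\theta=1$ CLT centred at $a_c j_n$ with variance $\lambda''(1)j_n$, this yields $\Phi(-c/\sqrt{\lambda''(1)})=\Phi(-b(a/\lambda''(1))^{1/2})$. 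For (II)(B), one splits $\sum\phi(nP(u))$ at $V(u)=\log n$: on the right tail $nP(u)<1$ and $\phi(nP(u))=(nP(u))^2/2\,(1+o(1))$, so the $\theta=2$ CLT gives $\tfrac{W(2)}{2}n^2e^{\lambda(2)j_n}\Phi(-b(a/\lambda''(2))^{1/2})$, while on the left tail $V(u)<\log n$ lies in the deep left tail of the $\theta=1$ Gibbs measure (since $a=-\lambda'(2)<a_c$) and is exponentially smaller.

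Part (II)(A) ($\theta>2$, $a<-\lambda'(2)$) is handled by a direct second-moment argument: the $\theta=2$ Gibbs measure concentrates at $V(u)\approx-\lambda'(2)j_n$ where $nP(u)=e^{(a+\lambda'(2))j_n+O(\sqrt{j_n})}\to 0$ exponentially, so the Taylor expansion $\phi(x)=(x^2/2)(1+o(1))$ at zero yields $\sum\phi(nP(u))=\tfrac{n^2}{2}W_{j_n}(2)e^{\lambda(2)j_n}(1+o(1))\to\tfrac{W(2)}{2}n^2e^{\lambda(2)j_n}$, the remainder being bounded using a Biggins estimate on $W_j(\theta')$ for some $\theta'>2$ (or on $W_j(3)$ when $\theta^\ast>3$). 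Part (I) follows from a Borel-Cantelli argument: when $\theta^\ast>2$ and $a<-\lambda(2)/2$, $\E[n-K_n^{(j_n)}]\leq\binom{n}{2}\E\sum_{|u|=j_n}P(u)^2=\binom{n}{2}e^{\lambda(2)j_n}$ decays exponentially, so a.s.\ $K_n^{(j_n)}=n$ for all large $n$; when $\theta^\ast\leq 2$ an analogous bound using a moment of order slightly below $\theta^\ast$ does the job. The main obstacle throughout will be upgrading the $L^1$ or in-probability versions of Biggins' estimates to the a.s.\ statements needed as $(n,j_n)\to\infty$ jointly; this is handled by combining the a.s.\ convergence $W_j(\theta)\to W(\theta)$ with uniform error bounds in the LLT, and by sandwiching between nearby integer values of $n$ via the a.s.\ monotonicity of $n\mapsto K_n^{(j_n)}$.
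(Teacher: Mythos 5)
Your overall strategy — Poissonize, compute the quenched mean and variance of $\mathcal K_t^{(j)}$ (resp.\ $N_t-\mathcal K_t^{(j)}$), feed the resulting sums into Biggins' local/central limit estimates for the tilted Gibbs measure, concentrate via Chebyshev and Borel--Cantelli, and depoissonize by monotonicity and the SLLN for the Poisson process — is exactly the route taken in the paper (Lemmas~\ref{lem:belowSaturationPoisson}, \ref{lem:belowSaturationPoissonk}, \ref{lem:poissonAboveSaturation}). The test functions you choose for (IIC), (III), (IV) match the paper's $m$, $h_k$, and the integrals $\Gamma(1-\theta)/\theta$ and $\Gamma(2-\theta)/(\theta(\theta-1))$ are correct. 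Your reduction for (III) to a bounded test function $h(t)=e^t(1-e^{-e^{-t}})$ against the $\theta=1$ Gibbs measure is a clean restatement of the paper's computation.

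There is, however, a genuine gap in your treatment of part (I). You claim that $\E[n-K_n^{(j_n)}]\le\binom{n}{2}e^{\lambda(2)j_n}$ \emph{decays exponentially}, and conclude $K_n^{(j_n)}=n$ eventually by Borel--Cantelli over $n$. But with $j_n\sim\log n/a$ one has $\binom{n}{2}e^{\lambda(2)j_n}=n^{2+\lambda(2)/a+o(1)}$, which is only polynomially small, and the exponent $2+\lambda(2)/a$ lies in $(-1,0)$ for $a$ in a whole neighbourhood of $a_\star=-\lambda(2)/2$ (namely $-\lambda(2)/3<a<-\lambda(2)/2$). In that range the series $\sum_n \E[n-K_n^{(j_n)}]$ diverges, and the annealed Borel--Cantelli over $n$ simply fails to cover the full regime $a<a_\star$. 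The paper avoids this by running a quenched Borel--Cantelli \emph{over $j$}, not over $n$: Lemma~\ref{lem:lowDensityPoisson} takes $t_j=e^{a_\star j-\epsilon j^{1/2}}$, bounds $\rmE\mathcal K_{t_j}^{(j)}(2)\le e^{-2\epsilon j^{1/2}}W_j(2)$, uses that $W_j(2)$ converges a.s.\ (so the $j$-sum converges a.s.), concludes $\mathcal K_{t_j}^{(j)}(2)\to0$ a.s., and then transfers to the fixed-$n$ scheme via $K_n^{(j_n)}(2)\le\mathcal K_{t_{j_n}}^{(j_n)}(2)$, which holds for large $n$ because $t_{j_n}/n\to\infty$. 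The key gains are (i) the $j$-indexed sum plus the buffer $e^{-\epsilon j^{1/2}}$ make the series summable regardless of $a<a_\star$, and (ii) the quenched bound uses the a.s.\ boundedness of the martingale $W_j(2)$ rather than the annealed $\E W_j(2)=1$. Your closing remark about ``sandwiching between nearby integer values of $n$'' is indeed the idea that rescues this (take a geometric subsequence $n_k$ and interpolate by the monotonicity of $(K_n^{(j)}(2))_n$ and of $(K_n^{(j)}(2))_j$), but you would need to say so explicitly in place of the incorrect ``decays exponentially'' claim. A secondary, smaller point: in (IIA) the remainder from replacing $\phi$ by $x^2/2$ is best controlled not by invoking $W_j(3)$ (for $\theta\in(2,3)$ the comparison $a+\lambda(3)-\lambda(2)<0$ is not automatic), but by truncating at $nP(u)\le\epsilon$ and using $P(u)^3\le(\epsilon/n)P(u)^2$ on that event, then sending $\epsilon\to0$; alternatively one uses the paper's decomposition into the dRi pieces $\hat m_\vartheta$ fed into Proposition~\ref{prop:bertoinExtended}.
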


\begin{remark}\label{rem:growth}
We intend to show that in the settings of cases (II) and (IV) of Theorem~\ref{thm:occupancy}, the a.s. growth of $n-K_n^{(j_n)}$ and $K_n^{(j_n)}$, respectively, is sublinear, the principal contribution being $n^{\alpha(a) + o(1)}$ for some $\alpha(a)\in (0,1)$. 

Let $a > 0$, we fix $\theta > 0$ such that $a = -\lambda'(\theta)$. Let $(j_n)_{n\in\N}$ be a sequence of positive integers satisfying $j_n \sim \log n/a$ as $n\to\infty$. Then
\[
  n^\theta e^{\lambda(\theta)j_n} = n^{-\frac{\lambda^*(a)}{a}+o(1)},\quad n\to\infty,
\]
which implies that $\alpha$ is given by
\[
  \alpha(a):= \begin{cases}  -\frac{\lambda^*\left(a\right)}{a} & \text{if } a> -\lambda'(2)\\ 2 - \lambda(2)/a & \text{if } a \leq - \lambda'(2) \end{cases},\quad a \in (a_\star,\bar{a}).
\]

Further, in view of \eqref{eqn:formCramer},
\[
  - \frac{\lambda^*(a)}{a} = \frac{\theta \lambda'(\theta) - \lambda(\theta)}{\lambda'(\theta)},
\]
and the function $g : \theta \in (\underline{\theta},\infty) \mapsto \theta \lambda'(\theta) - \lambda(\theta) - \lambda'(\theta)$ satisfies $g(1) = 0$ and $g'(\theta) = (\theta - 1) \lambda''(\theta)$. This in conjunction with the strict convexity of $\lambda$ implies that, for $\theta \neq 1$, $g(\theta)  > 0$ or equivalently
\[
  \theta \lambda'(\theta) - \lambda(\theta) > \lambda'(\theta).
\] Thus $\sup_{a > 0} \alpha(a) = 1$, with the supremum being attained at $a=-\lambda^\prime(1)$.
\end{remark}

\begin{remark} In case (II) of Theorem \ref{thm:occupancy}, the limit of the martingale appearing in the asymptotic expansion of $K^{(j_n)}_n$ is always positive almost surely. Let $a> a_\star$ and $\theta$ be such that $a = -\lambda'(\theta)$. By convexity of $\lambda$, there is the following dichotomy.
\begin{itemize}
  \item If $\theta^* > 2$ (in particular $W(2) > 0$ a.s.), then $a_\star = -\frac{\lambda(2)}{2} < v < -\lambda'(2)$. In this situation, the cases $\theta > 2$, $\theta=2$ and $\theta<2$ corresponds to cases (IIA), (IIB) and (IIC) of Theorem~\ref{thm:occupancy}, respectively.
  \item If $\theta^* \leq 2$, then $a_\star = v \leq -\frac{\lambda(2)}{2} \leq -\lambda'(2)$. In this situation, $\theta < \theta^*\leq 2$, so that the assumptions of case (IIC) hold.
\end{itemize}
\end{remark}

Theorem~\ref{thm:occupancy} provides the strong laws of large numbers (SLLNs, for short) for $K^{(j)}_n$ in all late levels $j=j_n$ of order $\log n$. In particular, case (I) $a < a_\star$ corresponds to the levels of the {\it very low density}, in which balls fall into distinct boxes with overwhelming probability. According to the terminology of the already mentioned article \cite{Joseph:2011}, the nested occupancy scheme gets extinct in these levels. Case (I) is treated in Section~\ref{subsec:lowDensity}. Case (II) corresponds to the levels of the {\it low density} that we call {\it presaturation levels}. The terminology stems from the fact that in these levels most balls fall into distinct boxes, but some of them may share the same box. Therefore, the SLLN for the number of occupied boxes is mainly driven by the number of available balls.  Case (II) is further divided into three subcases (IIA), (IIB) and (IIC), in which $K_n^{(j)}$ exhibits different asymptotics. Case (II) is analyzed in Section~\ref{subsec:belowSaturation}.

Case (IV) $a_c < a < \bar{a}$ corresponds to the levels of the {\it high density} that we call {\it postsaturation levels}. The motivation for this term is that in these levels most balls fall into the same collection of boxes, so that the number of occupied boxes in these levels is small with respect to the number of balls. It is explained in Remark on p.~1595 of \cite{Bertoin:2008} that the a.s.\ asymptotic behavior of $K^{(j)}_n$ is mainly driven by the number of the $j$th level boxes whose hitting probabilities are of order $e^{-j/a}\asymp 1/n$. Case (IV) which is mostly a generalization of Theorem 1 in \cite{Bertoin:2008} is treated in Section~\ref{subsec:aboveSaturation}. Case (III) $a=a_c$ corresponds to the levels of the {\it moderate density} that we call {\it saturation levels}. The a.s.\ growth rate of the number of occupied boxes in these levels is of order $cn$ for some $c \in (0,1)$. Case (III) is treated in Section~\ref{subsec:atSaturation}.

Let $k\in\N$. Next, we discuss the a.s.\ asymptotic behavior of $K^{(j_n)}_n(k)$ the number of boxes in the late levels $j_n$ which contain at least $k$ balls. Note that in low levels $j_n$ most balls cluster together. As a result, $K^{(j_n)}_n(k)$ and $K^{(j_n)}_n(1)$ are of the same order of magnitude. We focus here on the levels $j_n = \log n/(-\lambda'(k))$, in which a phase transition occurs for the asymptotic behavior of $K^{(j_n)}_n(k)$. Below that level, the behavior of $K^{(j_n)}_n(k)$ is driven by the asymptotic behavior of the number of `large' boxes (and was studied in \cite{Bertoin:2008}), whereas above that level, it is driven by the number of available balls. In some sense, Theorem~\ref{thm:occupancyk} is a generalisation to $k \in \N$ of part (III) of Theorem~\ref{thm:occupancy} dealing with the $k=1$ case.
\begin{theorem}\label{thm:occupancyk}
Let $b \in \R$, $k \in \N$ be such that $k \in (\underline{\theta},\theta^\ast)$ and $(j_n)_{n\in\N}$ a sequence of positive integers satisfying
\begin{equation*}
\lim_{n\to\infty} \frac{\log n+\lambda^\prime(k)j_n}{(\log n)^{1/2}}=b.
\end{equation*}
Then, as $n\to\infty$,
\begin{equation*}
K^{(j_n)}_n(k)=\frac{1}{k!}W(k)\Phi\left(-b\left(\tfrac{-\lambda^\prime(k)}{\lambda''(k)}\right)^{1/2}\right) n^k e^{\lambda(k) j_n} e^{-(k-1)(\lambda'(k)j_n + \log n)} (1+o(1))\quad {\rm a.s.} 
\end{equation*}
\end{theorem}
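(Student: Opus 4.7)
The plan is to mirror the strategy used in the proof of Theorem~\ref{thm:occupancy}: pass to the Poissonized scheme, reduce to a conditional-mean calculation, and invoke Biggins' local limit theorem (Lemma~\ref{fct:biggStep}). In the Poissonized version, conditionally on $\mathcal{V}$, each box $u$ with $|u|=j_n$ receives an independent $\mathrm{Poi}(ne^{-V(u)})$ number of balls, so
\[
  \E\!\left[\mathcal{K}_n^{(j_n)}(k) \,\middle|\, \mathcal{V}\right] \;=\; \sum_{|u|=j_n} F_k\!\left(n e^{-V(u)}\right), \qquad F_k(\mu) := \P\!\left(\mathrm{Poi}(\mu) \geq k\right).
\]
Reverting to the deterministic scheme is a routine Poissonization argument based on $N_n/n \to 1$ a.s.\ and the monotonicity of $n \mapsto K_n^{(j)}(k)$.

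The main step is the asymptotic of the conditional mean, which I would obtain via the Gibbs-measure identity
\[
  \sum_{|u|=j} F_k\!\left(ne^{-V(u)}\right) \;=\; n^k\, e^{\lambda(k) j}\, W_j(k)\, \E_{G}\!\left[g_k\bigl(\log n - V(U)\bigr)\right],
\]
where $U$ is sampled from the $k$-tilted Gibbs measure $G$ on $\{|u|=j\}$ with weights $e^{-kV(u)-\lambda(k)j}/W_j(k)$, and $g_k(y) := F_k(e^y) e^{-ky}$. The function $g_k$ is bounded by $1/k!$ and satisfies $g_k(y) \to 1/k!$ as $y \to -\infty$ (since $F_k(\mu) = \mu^k/k! + O(\mu^{k+1})$ near $\mu = 0$) and $g_k(y) \to 0$ as $y \to +\infty$. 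Under the hypothesis, the variable $Y_n := \log n - V(U)$ under $G$ has mean $\log n + \lambda'(k) j_n \sim b\sqrt{\log n}$ and variance $\lambda''(k) j_n$, so $Y_n/\sqrt{\log n}$ converges weakly to $\mathcal{N}\bigl(b,\ \lambda''(k)/(-\lambda'(k))\bigr)$ by the $k$-tilted CLT. Since the transition scale of $g_k$ is $O(1)$ while $|Y_n|$ lives on scale $\sqrt{\log n}$, one obtains $\E_{G}[g_k(Y_n)] \to (1/k!)\,\Phi\bigl(-b\sqrt{-\lambda'(k)/\lambda''(k)}\bigr)$.

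To upgrade this to an a.s.\ statement, I would sandwich $g_k$ between step functions: for any $\epsilon>0$ and $A$ large enough,
\[
  (1/k! - \epsilon)\, \mathbf{1}_{(-\infty,-A]}(y) \;\leq\; g_k(y) \;\leq\; (1/k!)\, \mathbf{1}_{(-\infty,A]}(y) + \epsilon,
\]
and apply Biggins' local limit theorem (Lemma~\ref{fct:biggStep} together with Proposition~\ref{prop:cltBrw}) to the a.s.\ limits of $W_{j_n}(k)\,\P_G\bigl(V(U) \geq \log n \pm A\bigr)$. Since $k \in (\underline{\theta},\theta^\ast)$, the Biggins martingale $W_{j_n}(k)$ converges a.s.\ to $W(k)$, and the tilted probabilities converge a.s.\ to $\Phi(-b\sqrt{-\lambda'(k)/\lambda''(k)})$; letting $\epsilon \to 0$ yields the desired a.s.\ asymptotic for $\sum_{|u|=j_n} F_k(ne^{-V(u)})$. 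The stochastic fluctuation of $\mathcal{K}_n^{(j_n)}(k)$ around this conditional mean is controlled via $\Var\!\left(\mathcal{K}_n^{(j_n)}(k)\mid\mathcal{V}\right) \leq \E[\mathcal{K}_n^{(j_n)}(k)\mid\mathcal{V}]$, valid by conditional independence of box counts; combined with a Chebyshev bound along a geometric subsequence $n = \lceil \alpha^l \rceil$ together with Borel--Cantelli and monotonicity in $n$, this promotes convergence in probability to a.s.\ convergence.

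The prefactor $n^k e^{\lambda(k) j_n}\,e^{-(k-1)(\lambda'(k) j_n + \log n)}$ in the conclusion equals $n\, e^{(\lambda(k)-(k-1)\lambda'(k)) j_n}$ identically, a convenient rewriting that emphasises the linear-in-$n$ scaling of $K_n^{(j_n)}(k)$; matching it with the output $n^k e^{\lambda(k) j_n}$ of the Gibbs calculation is immediate from the hypothesis $\log n + \lambda'(k) j_n \sim b\sqrt{\log n}$. The main technical obstacle is the lack of uniformity in $n$ when applying Biggins' local limit theorem to the $n$-dependent function $g_k(\log n - \cdot)$; the sandwich with indicators at the \emph{$n$-dependent} threshold $\log n \pm A$, coupled with the a.s.\ LLT for indicator sums provided by Proposition~\ref{prop:cltBrw}, is the mechanism that sidesteps this difficulty.
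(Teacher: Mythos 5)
The overall architecture of your argument (Poissonize, compute the conditional mean through the $k$-tilted Gibbs measure, sandwich $g_k=h_k$ between step functions and invoke Proposition~\ref{prop:cltBrw} for the indicator sums, then Chebyshev and Borel--Cantelli followed by depoissonization through the $k\geq 2$ analogue of \eqref{eq:4444}) is sound, and it is genuinely a lighter route than the paper's proof of Lemma~\ref{lem:belowSaturationPoissonk}. The paper splits $\phi_k(e^x)=e^{kx}h_k(x)$ into two directly Riemann integrable pieces plus an indicator, applies Proposition~\ref{prop:bertoinExtended} to the dRi pieces to show that they contribute only an $O(j^{-1/2})$ correction, and applies Proposition~\ref{prop:cltBrw} to the surviving indicator. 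Your monotone sandwich $\bigl(\tfrac{1}{k!}-\epsilon\bigr)\1_{(-\infty,-A]}\leq h_k\leq \tfrac{1}{k!}\1_{(-\infty,A]}+\epsilon$, together with the convergence of $W_j(k)\to W(k)$, bypasses the dRi machinery entirely; since $h_k$ is nonincreasing with limits $1/k!$ and $0$ at $\mp\infty$, the sandwich is legitimate, and the two $n$-dependent thresholds $\log n\mp A$ collapse to the same limit $y\to b\sqrt{-\lambda'(k)}$ when fed into Proposition~\ref{prop:cltBrw} because $A/j_n^{1/2}\to 0$.

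The genuine gap is the final ``matching'' sentence. Your Gibbs identity $\sum_{|u|=j}F_k(ne^{-V(u)})=n^ke^{\lambda(k)j}W_j(k)\,\E_G[g_k(\log n-V(U))]$ is correct and yields the asymptotic prefactor $n^ke^{\lambda(k)j_n}$. The theorem's stated prefactor is $n^ke^{\lambda(k)j_n}e^{-(k-1)(\lambda'(k)j_n+\log n)}$, and the ratio of the two is $e^{(k-1)(\log n+\lambda'(k)j_n)}=e^{(k-1)b\sqrt{\log n}(1+o(1))}$, which is \emph{not} $1+o(1)$ unless $b=0$ or $k=1$; so the claim that they agree ``immediately from the hypothesis'' is false. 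In fact your Gibbs computation is the correct one: by $\phi_k(z)\leq z^k/k!$ one has the deterministic bound $\rmE\calK^{(j)}_{t}(k)\leq t^ke^{\lambda(k)j}W_j(k)/k!$, and for $k\geq 2$ and $y<0$ the prefactor $e^{\varphi(k)j+y}$ appearing in Lemma~\ref{lem:belowSaturationPoissonk} exceeds this bound (the algebra $\sum_{|u|=j}\phi_k(e^{-\lambda'(k)j+y-V(u)})=e^{\varphi(k)j+ky}\sum_{|u|=j}e^{-kV(u)-\lambda(k)j}h_k(\cdots)$ actually produces $e^{\varphi(k)j+ky}$, not $e^{\varphi(k)j+y}$). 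So you should not have asserted a match; the honest conclusion of your computation is that the prefactor in the statement of Theorem~\ref{thm:occupancyk} (and in Lemma~\ref{lem:belowSaturationPoissonk}) should read $n^ke^{\lambda(k)j_n}=e^{\varphi(k)j_n+ky_n}$ with $y_n=\log n+\lambda'(k)j_n$, i.e.\ without the factor $e^{-(k-1)(\lambda'(k)j_n+\log n)}$.
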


The proof of Theorem~\ref{thm:occupancyk} is given in Section~\ref{subsec:atSaturation}.

Assume that $\lambda(0) < \infty$, that is, the mean number of the first level boxes is finite. Then all boxes in low enough levels are occupied with high probability. We refer to this situation as {\it freezing}. For $j \in \N$, denote by $Z_j:=\#\{|u|=j:P(u)\neq 0\}$ the {\it number of available boxes} in the $j$th level. Then $L_n^{(j)}:=Z_j - K_n^{(j)}$ is the {\it number of empty boxes} in the $j$th level. Theorem~\ref{thm:occupancyFreezing} is a strong law of large numbers for $L_n^{(j)}$ for the levels $j$ in which freezing occurs. Recall that the quantity $\theta_\ast$ is defined in \eqref{eq:thetaast-}.
\begin{theorem}\label{thm:occupancyFreezing}
Assume that $\underline{\theta} < 0$. Let $-\lambda^\prime(0) < a < \bar{a}_- = \lim_{\theta \to \theta_\ast +} (-\lambda'(\theta))$, $b\in\R$ and pick $\theta \in (\theta_\ast, 0)$ satisfying  $a=-\lambda'(\theta)$. Also, let $(j_n)_{n\in\N}$ be a sequence of positive integers such that
\begin{equation*}
\lim_{n\to\infty} \frac{\log n - a j_n}{(\log n)^{1/2}}=b.
\end{equation*}
Then
\[
  \lim_{n \to \infty}L_n^{(j_n)}= \frac{\Gamma(-\theta)}{(2\pi\lambda^{\prime\prime}(\theta))^{1/2}}e^{-ab^2/(2\lambda''(\theta))}W(\theta)\frac{n^\theta e^{\lambda(\theta)j_n}}{j_n^{1/2}}(1+o(1))\quad {\rm a.s.~~as}~~n \to \infty.
\]
\end{theorem}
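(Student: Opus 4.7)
The approach mirrors the proof of Case (IV) of Theorem~\ref{thm:occupancy}: work in the Poissonized model, compute the conditional mean of $\mathcal{L}_t^{(j)} := Z_j - \mathcal{K}_t^{(j)}$ via the Biggins local limit estimate, show concentration using the conditional variance, then depoissonize. Conditional on $\mathcal{F}_j$, the emptiness indicators are independent Bernoullis with parameters $e^{-te^{-V(u)}}$, so with $\phi(x) := e^{-e^{-x}}$,
\[
  M_t^{(j)} := \E\bigl[\mathcal{L}_t^{(j)} \mid \mathcal{F}_j\bigr] = \sum_{|u|=j}\phi(V(u) - \log t), \qquad \Var\bigl(\mathcal{L}_t^{(j)} \bigm| \mathcal{F}_j\bigr) \leq M_t^{(j)}.
\]

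The central step is to apply Lemma~\ref{fct:biggStep} (and its consequence Proposition~\ref{prop:bertoinExtended}) to $M_n^{(j_n)}$ with the test function $\phi$. Because $\theta<0$, the substitution $u = e^{-y}$ yields
\[
  \int_\R \phi(y)\, e^{\theta y}\,\dd y \;=\; \int_0^\infty u^{-\theta-1} e^{-u}\,\dd u \;=\; \Gamma(-\theta),
\]
convergence of the integral at $+\infty$ being precisely where the freezing hypothesis $\underline\theta<0$ enters. The Gaussian-shift computation then proceeds exactly as in Case (IV): writing $\log n = a j_n + b\,(\log n)^{1/2}(1+o(1))$ and using $a = -\lambda'(\theta)$, the local limit approximation produces the factor $e^{-ab^2/(2\lambda''(\theta))}$ and gives, almost surely as $n\to\infty$,
\[
  M_n^{(j_n)} \;=\; \frac{\Gamma(-\theta)}{(2\pi\lambda''(\theta))^{1/2}}\,e^{-ab^2/(2\lambda''(\theta))}\,W(\theta)\,\frac{n^\theta e^{\lambda(\theta) j_n}}{j_n^{1/2}}\,(1+o(1)),
\]
which is the target expression; note $W(\theta)>0$ a.s.\ since $\theta \in (\theta_\ast, 0) \subset (\theta_\ast,\theta^\ast)$.

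To replace $M_n^{(j_n)}$ by $\mathcal{L}_n^{(j_n)}$, observe that $n^\theta e^{\lambda(\theta) j_n} = n^{-\lambda^\ast(a)/a + o(1)}$ with $-\lambda^\ast(a)/a > 0$ on $(-\lambda'(0), \bar a_-)$ by Property B, so $M_n^{(j_n)}$ grows as a positive power of $n$. Conditional Chebyshev gives $\P\bigl(|\mathcal{L}_n^{(j_n)} - M_n^{(j_n)}| > \epsilon M_n^{(j_n)} \bigm| \mathcal{F}_{j_n}\bigr) \leq (\epsilon^2 M_n^{(j_n)})^{-1}$, which is a.s.\ summable in $n$, so Borel--Cantelli yields $\mathcal{L}_n^{(j_n)}/M_n^{(j_n)} \to 1$ a.s. For depoissonization, use the monotonicity of $n\mapsto L_n^{(j_n)}$: setting $t_\pm := n(1 \pm n^{-1/4})$, one has $N_{t_-} \leq n \leq N_{t_+}$ a.s.\ eventually by the SLLN for $(N_t)$, hence $\mathcal{L}_{t_+}^{(j_n)} \leq L_n^{(j_n)} \leq \mathcal{L}_{t_-}^{(j_n)}$, and the two outer bounds share the limit because $\log t_\pm = \log n + O(n^{-1/4})$ perturbs $M_{t_\pm}^{(j_n)}$ only by a factor $1+o(1)$.

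The chief obstacle is ensuring that the local limit estimate of Lemma~\ref{fct:biggStep} applies to $\phi$ with errors that are uniformly negligible over the range of shifts $\log n - aj_n$ permitted by the hypothesis; this is the same technical issue handled for Case (IV) in Section~\ref{subsec:aboveSaturation}. Once this is in place, the proof of Theorem~\ref{thm:occupancyFreezing} is a direct parallel of the postsaturation argument, the only substantive change being the swap $\Gamma(1-\theta)/\theta \leadsto \Gamma(-\theta)$, produced by using $\phi$ instead of $1-\phi$ and enabled by the sign change $\theta<0$ furnished by the freezing hypothesis.
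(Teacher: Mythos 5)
Your proposal is correct and follows essentially the same route as the paper: Poissonize, express the conditional mean and variance of $\mathcal{L}_t^{(j)}$ in terms of the random measure $Z_\theta^{(j)}$, apply Proposition~\ref{prop:bertoinExtended} to the dRi function $e^{-e^x}e^{-\theta x}$ (whose integral is $\Gamma(-\theta)$ precisely because $\theta<0$), conclude concentration by conditional Chebyshev plus Borel--Cantelli, and depoissonize via monotonicity of $n\mapsto L_n^{(j)}$; this is exactly Lemma~\ref{lem:free} and the short argument that follows it. One small caveat: your depoissonization uses the shrinking window $t_\pm = n(1\pm n^{-1/4})$, for which the bare SLLN $N_t/t\to1$ a.s.\ does not by itself give $N_{t_-}\le n\le N_{t_+}$ eventually (you need a fluctuation bound such as the LIL for $N_t-t$); the paper sidesteps this by taking a fixed $\delta$ and then letting $\gamma=1\pm\delta\to1$ at the end, using that the target formula depends continuously on $\gamma$ through $\gamma^\theta$.
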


Theorem~\ref{thm:occupancyFreezing} tells us that $L_n^{(j_n)}$ diverges to $\infty$ sublinearly a.s., with the rate of divergence being smaller than $e^{\lambda(0)j_n}$, the growth rate of $Z_{j_n}$. The former claim follows by the same reasoning as given in Remark~\ref{rem:growth}. To justify the latter claim, write $n^\theta e^{\lambda(\theta)j_n}=e^{-\lambda^\ast(a)j_n(1+o(1))}$ and note that by Property B in Section~\ref{sect:standing}, $-\lambda^\ast(a)<\lambda(0)$ whenever $a\in (-\lambda^\prime(0), \bar{a}_-)$. As a consequence, the a.s.\ growth rates of $K_n^{(j_n)}$ and $Z_{j_n}$ are of the same order $e^{\lambda(0)j_n}$. This demonstrates that the theorem does indeed treat the levels $j_n$ in which freezing occurs. The proof of Theorem~\ref{thm:occupancyFreezing} is given in Section~~\ref{subsec:freezing}.

\section{Branching random walk estimates}\label{sec:asymp}

Throughout this section we work with the BRW $\mathcal{V}$ as given in Section~\ref{sect:standing}. In particular, we always assume that conditions \eqref{eq:1} and \eqref{eqn:nonLattice} hold. For all $\theta \in (\theta_*,\theta^*)$ and $j \in \N$, define the random measure
\[
  Z_\theta^{(j)} := \sum_{|u|=j} e^{ - \theta V(u) - \lambda(\theta)j} \delta_{V(u)}.
\]
Note that the total mass of $Z^{(j)}_\theta$ is $W_j(\theta)$, and that $\bar{Z}_\theta^{(j)} := Z_\theta^{(j)}/W_j(\theta)$ is a random probability measure on $\R$ which is called the \emph{Gibbs measure of the BRW with inverse temperature $\theta$}. Choosing a point according to the law $\bar{Z}_\theta^{(j)}$ can be thought of as sampling the position of an individual $u$ in the $j$th generation of the BRW with probability proportional to $e^{\theta V(u)}$. The asymptotic behavior of $\bar{Z}_\theta^{(j)}$ for $\theta \geq \theta^*$ has been explored in \cite{Madaule,Pain}. In the present work we need some uniform estimates on $Z_\theta^{(j)}$ for $\theta \in (\theta_*,\theta^*)$. These are presented in Propositions \ref{prop:bertoinExtended} and \ref{prop:cltBrw} and Corollary \ref{lem:aux22} below.

For $\theta>\theta_\ast$, put $$g_\theta(y):=\frac{1}{(2\pi \lambda^{\prime\prime}(\theta))^{1/2}}\exp\Big(-\frac{y^2}{2\lambda^{\prime\prime}(\theta)}\Big),\quad y\in\R,$$ so that $g_\theta$ is the density of centered normal distribution with variance $\lambda^{\prime\prime}(\theta)$. For $\theta \in (\theta_\ast,\theta^\ast)$, Biggins \cite{Biggins:1992} showed that the random measure $Z^{(j)}_\theta$, properly scaled, converges a.s.\ to the normal distribution with density $g_\theta$. Specifically, we recall the case $p=1$ of Theorem 4 in \cite{Biggins:1992}, in the form presented by Bertoin in Lemma~4 of~\cite{Bertoin:2008} (for a BRW satisfying \eqref{eq:1}). The result is a local limit theorem for the random measure $Z^{(j)}_\theta$ as $j\to\infty$. 
\begin{lemma}\label{fct:biggStep}
The limit relation
\begin{equation}\label{eqn:bigg1}
  \lim_{j\to\infty}\left|j^{1/2}Z^{(j)}_\theta((x-\lambda'(\theta)j-h, x-\lambda'(\theta)j+h ])-2hW(\theta)g_\theta(j^{-1/2}x)\right|=0\quad\text{{\rm a.s.}}
\end{equation}
holds uniformly in $x\in\R$, in $h$ in bounded sets, and in $\theta$ in compact subsets of $(\theta_\ast,\theta^\ast)$.
\end{lemma}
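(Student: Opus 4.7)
The result is a version of Biggins' uniform local limit theorem for the Gibbs measure of the branching random walk \cite{Biggins:1992}, and the plan is to prove it via a Fourier-analytic argument based on complex-valued additive martingales. Extend $\lambda$ analytically to the strip $\Sigma := \{z \in \C : \mathrm{Re}\, z \in (\theta_\ast, \theta^\ast)\}$ via $\lambda(z) = \log \E \sum_{|u|=1} e^{-zV(u)}$; this is well-defined since absolute convergence in the real strip transfers to the complex one. On this strip, set
\[
  M_j(z) := \sum_{|u|=j} e^{-zV(u) - \lambda(z) j},
\]
which is, for each fixed $z$, a complex-valued martingale with respect to $(\calF_j)$. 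The computation given in the paragraph preceding \eqref{eq:1} shows that, for $\theta \in (\theta_\ast, \theta^\ast)$, there exists $\gamma > 1$ for which $\E|M_1(\theta)|^\gamma < \infty$; this extends to a neighborhood of $\theta$ in $\Sigma$, so Biggins' uniform convergence theorem \cite{Biggins:1992} applies and $M_j(z) \to M_\infty(z)$ almost surely, uniformly on compact subsets of $\Sigma$, where $M_\infty$ is analytic on $\Sigma$ with $M_\infty(\theta) = W(\theta)$ for real $\theta \in (\theta_\ast, \theta^\ast)$.

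Next I would observe that the Fourier transform of the shifted measure $y \mapsto Z^{(j)}_\theta(\lambda'(\theta)j + \diff y)$ at frequency $\xi \in \R$ is exactly $e^{-i\xi \lambda'(\theta) j} M_j(\theta - i\xi)$. The Taylor expansion
\[
  j\bigl(\lambda(\theta - i\xi/\sqrt{j}) - \lambda(\theta)\bigr) + i\xi\sqrt{j}\,\lambda'(\theta) = -\tfrac{1}{2}\xi^2\lambda''(\theta) + O(j^{-1/2}),
\]
together with the uniform convergence of $M_j$ on compacts of $\Sigma$, then yields that the Fourier transform at frequency $\xi$ of the rescaled probability measure $\bar{Z}^{(j)}_\theta(\lambda'(\theta)j + \sqrt{j}\,\diff y)$ converges a.s.\ to $e^{-\xi^2 \lambda''(\theta)/2}$, uniformly for $(\theta, \xi)$ in compact subsets of $(\theta_\ast, \theta^\ast) \times \R$. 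This is the integrated CLT for the Gibbs measure. Fourier inversion against a smooth bump approximating $\1_{(-h,h]}$, followed by an $h$-refinement argument, promotes this convergence to the pointwise local statement \eqref{eqn:bigg1}, rescaled back to intervals of bounded length $2h$ around $x - \lambda'(\theta)j$.

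The main obstacle is passing from the integrated CLT to the local version, i.e.\ controlling the high-frequency tail of the Fourier integral uniformly in $x \in \R$, in $h$ in bounded sets, and in $\theta$ on compact subsets of $(\theta_\ast, \theta^\ast)$. For this the nonlattice assumption \eqref{eqn:nonLattice} is essential: it guarantees that for every compact $K \subset (\theta_\ast, \theta^\ast)$ and every $\epsilon > 0$ there exists $\rho < 1$ with
\[
  \sup_{\theta \in K,\, |\xi| \geq \epsilon}\, \bigl| \E M_1(\theta - i\xi) \bigr|^{1/1}\, e^{-\mathrm{Re}\,\lambda(\theta) + \mathrm{Re}\,\lambda(\theta - i\xi)} \leq \rho,
\]
which, via a many-to-one estimate on $\E |M_j(\theta - i\xi)|^2$, produces geometric decay of $|M_j(\theta - i\xi)|$ outside a neighborhood of $\xi=0$. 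Coupling this high-frequency bound with the Gaussian local expansion at low frequencies, and gaining joint uniformity in $\theta$ by the equicontinuity following from analyticity of $M_j$ in $\Sigma$, completes the proof. The uniformity in $x$ is automatic: once the Fourier transform of $\bar Z^{(j)}_\theta(\lambda'(\theta)j + \sqrt{j}\,\diff y)$ converges in $L^1(\diff \xi)$ to the Gaussian characteristic function, Fourier inversion at the translated point $x/\sqrt{j}$ yields the claim uniformly in $x \in \R$.
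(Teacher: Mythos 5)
The paper does not prove Lemma~\ref{fct:biggStep} at all: the surrounding text explicitly states that this is ``the case $p=1$ of Theorem 4 in \cite{Biggins:1992}, in the form presented by Bertoin in Lemma~4 of~\cite{Bertoin:2008}.'' In other words, the lemma is imported as a known result, and no proof is offered in the paper. You instead attempted to reconstruct Biggins' argument from scratch. Your sketch does follow the genuine strategy underlying his proof --- analytic continuation of $\lambda$ to a vertical strip, complex additive martingales $M_j(z)$ converging uniformly on compacts, a Taylor expansion of $\lambda$ giving the Gaussian characteristic function, and the nonlattice hypothesis to control high frequencies --- so the route is right, and there is a legitimate comparison to make between ``cite it'' and ``prove it,'' but the sketch as written has a few genuine problems.

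First, the Fourier transform formula is not what you claim. Writing $\widehat{Z^{(j)}_\theta}(\xi)=\sum_{|u|=j} e^{-(\theta+i\xi)V(u)-\lambda(\theta)j}$, you should get
\[
  \widehat{Z^{(j)}_\theta}(\xi)=e^{(\lambda(\theta+i\xi)-\lambda(\theta))j}\,M_j(\theta+i\xi),
\]
so the Fourier transform of the shifted and diffusively rescaled measure carries a factor $e^{j(\lambda(\theta+i\xi/\sqrt{j})-\lambda(\theta))}$ in addition to the phase $e^{i\xi\sqrt{j}\lambda'(\theta)}$ and to $M_j(\theta+i\xi/\sqrt{j})$; your displayed identity omits this exponential factor, which is precisely the quantity you then Taylor-expand two lines later. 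As written, the ``exactly'' is false. Second, in your high-frequency bound the quantity $\E M_1(\theta-i\xi)$ equals $1$ identically on the strip, since $M_j(z)$ is by construction a mean-one martingale for each $z$; the prefactor $\lvert \E M_1(\theta-i\xi)\rvert^{1/1}$ is therefore vacuous and the substantive bound must be phrased entirely in terms of $|L(\theta-i\xi)|/L(\theta)=e^{\operatorname{Re}\lambda(\theta-i\xi)-\lambda(\theta)}$, which the nonlattice hypothesis keeps below one for $\xi\neq 0$.

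The more serious gap is in passing from a second-moment bound on $M_j(\theta-i\xi)$ to almost sure, uniform-in-$\xi$ geometric decay of $|M_j(\theta-i\xi)|$. A bound of the form $\E|M_j(\theta-i\xi)|^2\leq C\varrho^{j}$ controls the expectation but does not by itself yield an a.s.\ bound uniformly over the (uncountable) range of $\xi$ needed to perform Fourier inversion and to get the a.s.\ convergence in $L^1(\dd\xi)$ that you invoke at the end. In Biggins' proof this is exactly where the real technical work lives: one has to integrate the $p$-th moment estimates in $\xi$, use the $L^p$ ($p\in(1,2]$) martingale maximal inequality, and then Borel--Cantelli, in a way that produces simultaneous a.s.\ control of the Fourier transform on the whole frequency axis and of the error in the Gaussian expansion near $\xi=0$. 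Your sketch waves at this with ``via a many-to-one estimate \dots produces geometric decay,'' but does not supply the mechanism by which an expectation estimate becomes an a.s.\ uniform one. Given that the paper simply cites the result, the cleanest resolution here is to do the same and not reproduce Biggins' proof; if you do want to reproduce it, you need to fill in the $L^p$ maximal-inequality step and correct the Fourier identity.
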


Recall that a function $f:\R \to \R^+$ is called {\it directly Riemann integrable} (dRi) on $\R$, if

\noindent (a) $\sum_{n\in\Z} \sup_{(n-1)h\leq y<nh} f(y)<\infty$ for each $h>0$ and

\noindent (b) $\lim_{h\to 0+}h \sum_{n\in\Z}\big( \sup_{(n-1)h\leq y<nh} f(y)-\inf_{(n-1)h\leq y<nh}f(y)\big)=0$.

In this article we need the following extension of Lemma~\ref{fct:biggStep}.
\begin{proposition}\label{prop:bertoinExtended}
Let $\theta\in (\theta_\ast, \theta^\ast)$ and $f : \R \to \R^+$ be a directly Riemann integrable function on $\R$. Then
\begin{multline}
  \lim_{j \to \infty} \sup_{y \in \R} \Big| \ j^{1/2} \sum_{|u|=j}e^{- \theta V(u) - \lambda(\theta) j} f(-\lambda^\prime(\theta)j+y-V(u))\\
   - W(\theta) g_\theta(yj^{-1/2})\int_\R f(x){\rm d}x \Big| = 0\quad \text{{\rm a.s.}}\label{eq:exten}
\end{multline}
\end{proposition}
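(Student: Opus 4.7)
The plan is to approximate the dRi function $f$ from above and below by step functions on a mesh $h\Z$, apply Lemma~\ref{fct:biggStep} to each step, and let $h\to 0$. Writing
\[
T_j g(y) := j^{1/2} \sum_{|u|=j} e^{-\theta V(u) - \lambda(\theta)j}\, g(-\lambda'(\theta)j + y - V(u))
\]
for brevity, the target is $\sup_{y\in\R}|T_j f(y) - W(\theta) g_\theta(yj^{-1/2})\int_\R f|\to 0$ a.s. For $h>0$, introduce $\bar c_n := \sup_{[(n-1)h,nh)} f$ and $\underline c_n := \inf_{[(n-1)h,nh)} f$, together with the associated step functions $\bar f_h, \underline f_h$. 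Since $\underline f_h \leq f \leq \bar f_h$ and $g \mapsto T_j g$ is monotone, while $\int_\R \bar f_h - \int_\R \underline f_h \to 0$ as $h\to 0$ by dRi condition (b) and $\|g_\theta\|_\infty < \infty$, it suffices to prove, for each fixed $h > 0$,
\[
  \sup_{y \in \R} \bigl| T_j \bar f_h(y) - W(\theta) g_\theta(yj^{-1/2}) {\textstyle\int_\R \bar f_h}\,\bigr| \xrightarrow[j\to\infty]{} 0 \quad\text{a.s.}
\]
together with the analogue for $\underline f_h$.

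I would expand $T_j \bar f_h(y) = \sum_{n\in \Z} \bar c_n\, j^{1/2} Z_\theta^{(j)}(A_n(y))$, with $A_n(y) := (y - \lambda'(\theta)j - nh, y - \lambda'(\theta)j - (n-1)h]$, and split the sum at a truncation level $|n| \leq N$. Each $A_n(y)$ is an interval of half-length $h/2$ centred at $x_n - \lambda'(\theta) j$ with $x_n := y - nh + h/2$, so Lemma~\ref{fct:biggStep}, used with its uniformity in the centre $x\in\R$, yields
\[
j^{1/2} Z_\theta^{(j)}(A_n(y)) = h W(\theta) g_\theta(x_n j^{-1/2}) + o(1) \quad \text{as } j \to \infty,
\]
with the $o(1)$ uniform in $y\in\R$ and $n\in\Z$. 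Summing over the finitely many $|n|\leq N$ and invoking the uniform continuity of $g_\theta$ together with $|x_n j^{-1/2} - y j^{-1/2}| \leq (N+\tfrac12) h\, j^{-1/2} \to 0$, the centre contribution equals $h W(\theta) g_\theta(y j^{-1/2}) \sum_{|n|\leq N} \bar c_n + o(1)$ uniformly in $y$.

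For the tail $|n| > N$, the same uniform-in-$x$ estimate of Lemma~\ref{fct:biggStep} delivers the crude bound $j^{1/2} Z_\theta^{(j)}(A_n(y)) \leq h\,(W(\theta) \|g_\theta\|_\infty + 1)$ valid for all $y, n$ once $j$ is large. Combined with dRi condition (a), which makes $h\sum_{|n|>N}\bar c_n$ arbitrarily small as $N\to\infty$ for fixed $h$, the tail of $T_j \bar f_h(y)$ is uniformly negligible, and the matching tail of the Gaussian approximation is controlled by the parallel bound $W(\theta) \|g_\theta\|_\infty \sum_{|n|>N} h \bar c_n$. Choosing first $h$ small (for the sandwich), then $N$ large (for the dRi tail), then $j$ large (for Lemma~\ref{fct:biggStep}) closes the argument. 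The main obstacle is the uniformity in $y \in \R$: what makes it tractable is that Lemma~\ref{fct:biggStep} already supplies uniformity in the centre $x\in\R$, so no Gaussian tail estimate is needed to cover large $|y|$, while the dRi decay of $f$ replaces any Gaussian decay needed to truncate the sum over $n$.
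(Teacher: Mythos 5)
Your proposal is correct and follows essentially the same route as the paper's proof: sandwich the dRi $f$ between the step functions $\underline f_h \le f \le \bar f_h$, expand the step functions as countable sums of indicators on the mesh $h\Z$, apply the uniform-in-$x$ version of Lemma~\ref{fct:biggStep} to each indicator piece, truncate the sum at $|n|\le N$ and control the tail via the dRi summability condition (a), then let $h\to 0$ using condition (b). The paper merely slices the argument slightly differently (treating a single indicator first, then a convergent sum of indicators, then the sandwich), but the decomposition, the key lemma, and the order of limits are the same as yours.
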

\begin{remark}
Proposition~\ref{prop:bertoinExtended} is an extension of Corollary~4 in \cite{Biggins:1992} and Corollary~1 in \cite{Bertoin:2008}. The former result only treats dRi functions of compact support. The latter result is concerned with a subclass of dRi functions having a prescribed rate of decay at $\pm \infty$ and investigates the sums of the form
\[
  \sum_{|u|=j}e^{-\theta V(u)-\lambda(\theta)j}f(-\lambda^\prime(\theta)j+c_j-V(u))
\]
for sequences $(c_j)$ converging to a finite limit. Last but not least, unlike in the cited results, the convergence dealt with here is uniform in $y$. 
\end{remark}

Proposition~\ref{prop:cltBrw} given next is an a.s.\ central limit theorem for the random measure $Z_\theta^{(j)}$ as $j\to\infty$. We recall that `a.e.' is a shorthand for `almost everywhere'.
\begin{proposition}\label{prop:cltBrw}
Let $\theta\in (\theta_\ast, \theta^\ast)$ and $f : \R \to \R$ be a bounded, a.e.\ continuous function. Then, for any $C>0$, a.s.,
\[
  \lim_{j \to \infty} \sup_{|y|\leq C} \Big| \sum_{|u|=j}e^{-\theta V(u)-\lambda(\theta) j} f\Big(\frac{- \lambda^\prime(\theta)j-V(u)}{j^{1/2}}+ y\Big) - W(\theta) \int_\R f(y-x) g_\theta(x) \dd x  \Big| = 0.
\]
If, in addition, $f$ has a compact support, then the asymptotic relation holds uniformly in $y\in\R$.
\end{proposition}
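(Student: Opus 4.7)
The plan is to recast the sum as an integral against the rescaled random measure
\[
\mu_j := \sum_{|u|=j} e^{-\theta V(u) - \lambda(\theta) j}\, \delta_{(-V(u) - \lambda'(\theta) j)/j^{1/2}},
\]
so that the quantity inside the absolute value equals $\int_\R f(x+y)\,\mu_j(\dd x) - W(\theta) \int_\R f(x+y) g_\theta(x)\,\dd x$ (using the symmetry of $g_\theta$ to match the stated target). The strategy is to show that, a.s., $\mu_j$ converges to $W(\theta) g_\theta(x)\,\dd x$ strongly enough to push $f$ through the limit, uniformly over the relevant range of $y$.

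The core step is to prove that, a.s., for every bounded interval $(\alpha,\beta]$,
\[
\mu_j((\alpha,\beta]) = Z_\theta^{(j)}\big([-\beta j^{1/2}-\lambda'(\theta)j,\, -\alpha j^{1/2}-\lambda'(\theta)j)\big) \xrightarrow[j\to\infty]{} W(\theta)\int_\alpha^\beta g_\theta(x)\,\dd x.
\]
The idea is to partition this $Z_\theta^{(j)}$-interval (of length $(\beta-\alpha)j^{1/2}$) into $N \sim (\beta-\alpha) j^{1/2}/(2h)$ subintervals of length $2h$ for some fixed small $h>0$, apply Lemma~\ref{fct:biggStep} to each, and exploit the uniformity in the centring point. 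Summing the resulting estimates produces a Riemann sum with mesh $2h/j^{1/2}$ for $W(\theta)\int_\alpha^\beta g_\theta$, while the cumulative error is $N\cdot o(j^{-1/2}) = o(1)$ by the uniform control of the lemma. The same partitioning argument, combined with $\mu_j(\R) = W_j(\theta) \to W(\theta)$ a.s., yields a.s.\ tightness of $(\mu_j)$, namely $\lim_{M\to\infty}\limsup_j \mu_j(\R\setminus[-M,M]) = 0$.

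Passing to general $f$ is by approximation. When $f$ is compactly supported, bounded and a.e.\ continuous, it is Riemann integrable, so given $\varepsilon>0$ one sandwiches $\phi_\varepsilon^- \leq f \leq \phi_\varepsilon^+$ by step functions supported in a common compact set with $\int(\phi_\varepsilon^+ - \phi_\varepsilon^-)\,\dd x < \varepsilon$. Each $\int \phi_\varepsilon^\pm(x+y)\,\mu_j(\dd x)$ is a finite linear combination of terms $\mu_j((a_k-y, b_k-y])$, so the interval convergence extends to uniform convergence in $y\in\R$ (tightness handles the regime of large $|y|$, where both pre-limit and limit vanish). For a general bounded $f$, decompose $f = f\indset{[-M,M]} + f\indset{\R\setminus[-M,M]}$: the first summand is compactly supported and handled by the previous step, while the second contributes at most $\|f\|_\infty\, \mu_j(\R\setminus[-M+C, M-C])$ whenever $|y|\leq C$, which is made arbitrarily small by choosing $M$ large, thanks to tightness. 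The main delicacy lies in the Riemann-sum bookkeeping: one must verify that the $O(j^{1/2})$ accumulated errors from Lemma~\ref{fct:biggStep} decay uniformly not only over the partition but also as the shifts $y$ vary over $[-C,C]$. This is precisely the content of the uniformity in $x$ built into Lemma~\ref{fct:biggStep}, which is why that lemma is exactly the right tool here.
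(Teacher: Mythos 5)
Your proposal takes essentially the same route as the paper: rewrite the sum as an integral against the (rescaled) measure, establish interval convergence by tiling a $Z^{(j)}_\theta$-interval of length of order $j^{1/2}$ into $\sim j^{1/2}$ pieces to which Lemma~\ref{fct:biggStep} applies uniformly (so that the accumulated error is $j^{1/2}\cdot o(j^{-1/2})=o(1)$ and the principal contribution is a Riemann sum converging to $W(\theta)\int g_\theta$), then pass to compactly supported Riemann-integrable $f$ by sandwiching between step functions, and finally to general bounded a.e.\ continuous $f$ by truncation to $[-M,M]$ with the tail controlled by tightness; this matches the paper's Steps 1--3 exactly, with only cosmetic differences (you keep $h$ as a fixed small number while the paper takes subintervals of length $\sim 1$, you handle large $|y|$ for compactly supported $f$ via tightness whereas the paper deduces uniformity over $y\in\R$ directly from the uniform continuity of $g_\theta$ in the Riemann-sum step).
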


A specialization of Proposition~\ref{prop:cltBrw} with $f(x)=\1_{(-\infty,\, 0]}(x)$ in combination with a simple additional argument yields the following corollary to be exploited in the proof of Lemma~\ref{lem:belowSaturationPoisson}(2).
\begin{corollary}\label{lem:aux22}
Let $\theta\in (\theta_\ast, \theta^\ast)$ and $\delta\in (0,-\lambda^\prime(\theta))$. Then, for any $C>0$, $$\lim_{j\to\infty} \sup_{|y|\leq Cj^{1/2}} \Big| \sum_{|u|=j}e^{-\theta V(u)-\lambda(\theta) j}\1_{\{V(u)\geq \delta j+y\}}-W(\theta)\Big|=0\quad\text{{\rm a.s.}}$$
\end{corollary}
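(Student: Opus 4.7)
The plan is to reduce the supremum to evaluation at a single point by monotonicity, and then to feed a fixed (non-diverging) parameter into Proposition~\ref{prop:cltBrw}. Set
\[
S_j(y):=\sum_{|u|=j}e^{-\theta V(u)-\lambda(\theta)j}\1_{\{V(u)\geq \delta j+y\}},\qquad T_j(y):=W_j(\theta)-S_j(y)\geq 0.
\]
Since $T_j(\cdot)$ is nondecreasing in $y$, the triangle inequality gives
\[
\sup_{|y|\leq Cj^{1/2}}|S_j(y)-W(\theta)|\leq T_j(Cj^{1/2})+|W_j(\theta)-W(\theta)|.
\]
The second term vanishes almost surely thanks to the $L^1$-martingale convergence $W_j(\theta)\to W(\theta)$ valid for $\theta\in(\theta_\ast,\theta^\ast)$ (this is the content of the discussion at the end of Section~\ref{sect:standing}). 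So the task reduces to showing $T_j(Cj^{1/2})\to 0$ a.s.

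Because $\delta<-\lambda'(\theta)$, the gap $\rho:=-\lambda'(\theta)-\delta$ is strictly positive. For each fixed $M>0$ and each $j$ large enough that $\rho j\geq (M+C)j^{1/2}$, we have $\delta j+Cj^{1/2}\leq -\lambda'(\theta)j-Mj^{1/2}$, whence
\[
T_j(Cj^{1/2})\leq \sum_{|u|=j}e^{-\theta V(u)-\lambda(\theta)j}\1_{\{V(u)<-\lambda'(\theta)j-Mj^{1/2}\}}.
\]
Rewriting the indicator as $\1_{\{(-\lambda'(\theta)j-V(u))/j^{1/2}>M\}}$, the right-hand side is exactly the expression obtained from Proposition~\ref{prop:cltBrw} by specializing to the bounded, a.e.\ continuous function $f=\1_{(M,\infty)}$ and evaluating at the fixed point $y=0$. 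The proposition then yields, almost surely,
\[
\lim_{j\to\infty}\sum_{|u|=j}e^{-\theta V(u)-\lambda(\theta)j}\1_{\{V(u)<-\lambda'(\theta)j-Mj^{1/2}\}}=W(\theta)\int_{-\infty}^{-M}g_\theta(x)\,\dd x.
\]
Combining the last two displays and letting $M\to\infty$ along a countable sequence collapses the Gaussian tail to $0$, so $\limsup_{j\to\infty}T_j(Cj^{1/2})=0$ a.s., which is what was needed.

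The one delicate point is that Proposition~\ref{prop:cltBrw} is stated with uniformity in $y$ only over bounded sets (or over all of $\R$ when $f$ has compact support), and so one cannot simply feed the diverging shift $y=Cj^{1/2}$ into it. The key idea — and the reason the corollary reads so cleanly — is that the \emph{linear} gap $\rho j$ swallows the $O(j^{1/2})$ shift, so that the problem can be traded for a pointwise application of Proposition~\ref{prop:cltBrw} at the fixed parameter $y=0$ applied to the non-compactly supported (but bounded and a.e.\ continuous) indicator $\1_{(M,\infty)}$. Once this reduction is in place, the remaining ingredient is the elementary fact that the Gaussian tail $\int_{-\infty}^{-M}g_\theta(x)\,\dd x$ tends to $0$ as $M\to\infty$.
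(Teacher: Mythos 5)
Your proof is correct and follows essentially the same route as the paper: split off the martingale $W_j(\theta)$, exploit that the linear gap $(-\lambda'(\theta)-\delta)j$ eventually dominates the $O(j^{1/2})$ shift, and then apply Proposition~\ref{prop:cltBrw} with a fixed cutoff parameter that is sent to infinity at the very end. The only cosmetic difference is that the paper bounds the sum from below by $W(\theta)\int_{y-A}^{\infty}g_\theta$ via the uniform-in-$|y|\le C$ version of Proposition~\ref{prop:cltBrw} applied with $f=\1_{(-\infty,0]}$, while you first observe that $T_j(y)$ is nondecreasing in $y$ to reduce the supremum to the single endpoint $y=Cj^{1/2}$ and then only need Proposition~\ref{prop:cltBrw} at $y=0$ with $f=\1_{(M,\infty)}$; this neatly sidesteps any appeal to uniformity in $y$, but the substance of the argument is the same.
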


The proofs of Propositions~\ref{prop:bertoinExtended} and~\ref{prop:cltBrw} and of Corollary~\ref{lem:aux22} are given in Appendix~\ref{app:proofProps}.

\section{Results for the Poissonized version. Proof of Theorem~\ref{thm:occupancy}}

All the statements of Theorem~\ref{thm:occupancy} will be proved along similar lines, with the help of a Poissonization method, that we now describe. We shall work with the random variables $\mathcal{K}_t^{(j)}(k)$, $t\geq 0$, $k,j\in\N$ defined in formula \eqref{eq:ktdef}. In other words, we shall investigate a nested occupancy scheme in random environment in which the number of balls has a Poisson distribution with mean $t$. By the thinning property of Poisson processes, conditionally on the BRW $\mathcal{V}$, the numbers of balls in different boxes of the given level are independent, and the number of balls in the box $u$ has a Poisson distribution of mean $te^{-V(u)}$.

We shall write $\rmP(\cdot):=\P(\cdot|\mathcal{V})$ for the conditional law given $\mathcal{V}$ and $\rmE$ and $\rmVar$ for the corresponding (conditional) mean and variance, respectively. It can be checked that
\begin{multline}\label{eq:mean}
\rmE \calK^{(j)}_t(k)= \sum_{|u|=j} \phi_k(te^{-V(u)}) \\ \text{ and } \rmVar \calK^{(j)}_t(k)= \sum_{|u|=j} \phi_k(te^{-V(u)}) (1 - \phi_k(te^{-V(u)})),
\end{multline}
where $\phi_k(z) = e^{-z}\sum_{i=k}^\infty \frac{z^i}{i!} = e^{-z}\left( e^z - \sum_{i=0}^{k-1} \frac{z^i}{i!} \right)$. Invoking the results of Section~\ref{sec:asymp} we shall provide the a.s. asymptotic behavior of these quantities as $t \to \infty$. Then, using the Bienaymé-Tchebychev inequality and the Borel-Cantelli lemma, we shall prove SLLNs for $\calK^{(j)}_t(k)$. At the last step called {\it depoissonization}, we shall get back to the original deterministic scheme and deduce the claimed SLLNs for $K^{(j)}_n$ from the already proved SLLNs for $\calK^{(j)}_t(k)$. The main technical tools for this final step is the SLLN for the Poisson process $N_t \sim t$ a.s. as $t \to \infty$ and the monotonicity properties of nested occupancy schemes.

\subsection{Levels of the very low density. Proof of Theorem~\ref{thm:occupancy}(I)} \label{subsec:lowDensity}

We consider in this section the number of occupied boxes in the levels of the very low density.
\begin{lemma}\label{lem:lowDensityPoisson}
Let $\epsilon > 0$ and $t_j:= e^{a_\star j - \epsilon j^{1/2}}$ for $j \in \N$. Then 
\[ \lim_{j \to \infty} \mathcal{K}^{(j)}_{t_j}(2) = 0 \quad \text{{\rm a.s.}}\]
\end{lemma}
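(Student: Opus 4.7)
The plan is to apply the (conditional) first Borel--Cantelli lemma to the events $\{\mathcal{K}^{(j)}_{t_j}(2)\geq 1\}$: since $\mathcal{K}^{(j)}_{t_j}(2)$ takes values in $\N_0$, showing $\sum_j \rmP(\mathcal{K}^{(j)}_{t_j}(2)\geq 1)<\infty$ almost surely (with $\rmP = \P(\cdot\mid\mathcal{V})$ as in the section introduction) implies that, almost surely, $\mathcal{K}^{(j)}_{t_j}(2)=0$ for all $j$ large enough, which is the desired conclusion.

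The key elementary input is the pointwise bound
\[
\phi_2(z)\leq C_\theta\, z^\theta\qquad(z\geq 0),
\]
valid for every $\theta\in(0,2]$; this is immediate from $\phi_2(z)\sim z^2/2$ as $z\to 0^+$ together with $\phi_2(z)\leq 1$ on $[0,\infty)$, which makes $z\mapsto z^{-\theta}\phi_2(z)$ bounded on $(0,\infty)$ for any $\theta\in(0,2]$. Inserting this into \eqref{eq:mean} and using Markov's inequality yields
\[
\rmP(\mathcal{K}^{(j)}_{t_j}(2)\geq 1)\leq \rmE \mathcal{K}^{(j)}_{t_j}(2)\leq C_\theta\, t_j^\theta \sum_{|u|=j}e^{-\theta V(u)} = C_\theta\, W_j(\theta)\, t_j^\theta e^{\lambda(\theta)j}.
\]

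I would then choose $\theta$ so as to make $\theta a_\star + \lambda(\theta) = 0$: take $\theta=2$ when $\theta^\ast>2$ (recalling $a_\star = -\lambda(2)/2$ in this case) and $\theta=\theta^\ast$ when $\theta^\ast\leq 2$ (recalling $a_\star = -\lambda(\theta^\ast)/\theta^\ast$). With either choice, the definition $t_j = e^{a_\star j - \epsilon j^{1/2}}$ gives
\[
t_j^\theta e^{\lambda(\theta)j} = \exp\bigl((\theta a_\star + \lambda(\theta))j - \theta\epsilon j^{1/2}\bigr) = e^{-\theta\epsilon j^{1/2}}.
\]
Moreover, the nonnegative martingale $(W_j(\theta))_{j\geq 1}$ converges almost surely --- to $W(2)$ in the first case and to $W(\theta^\ast) = 0$ in the second, but positivity of the limit is irrelevant here --- so $M := \sup_{j\geq 1} W_j(\theta)$ is almost surely finite. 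Combining,
\[
\sum_{j\geq 1} \rmP(\mathcal{K}^{(j)}_{t_j}(2)\geq 1) \leq C_\theta M \sum_{j\geq 1} e^{-\theta\epsilon j^{1/2}} < \infty\quad\text{almost surely},
\]
and the conditional Borel--Cantelli lemma concludes the proof.

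The one genuine subtlety is finding a bound on $\phi_2$ of the correct order uniformly over both regimes that actually contribute: boxes of large weight, for which $\phi_2(t_j e^{-V(u)})$ is of order $1$, and boxes of small weight, for which $\phi_2(t_j e^{-V(u)}) \asymp (t_j e^{-V(u)})^2$. The uniform estimate $\phi_2(z)\leq C_\theta z^\theta$ for $\theta\in(0,2]$ interpolates between these two extremes, after which the choice of $\theta$ so as to kill the linear-in-$j$ exponent is essentially forced by the definition of $a_\star$ in \eqref{eqn:defAstar}, and the remaining $e^{-\theta\epsilon j^{1/2}}$ factor provides the summability needed for Borel--Cantelli.
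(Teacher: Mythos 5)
Your proof is correct, and it takes a genuinely cleaner route than the paper's in the case $\theta^*\leq 2$. The paper uses the single uniform bound $\phi_2(x)\leq x^2$ throughout; this works directly when $\theta^*\geq 2$ (giving $\rmE\calK^{(j)}_{t_j}(2)\leq e^{-2\epsilon j^{1/2}}W_j(2)$), but when $\theta^*<2$ the exponent $2a_\star+\lambda(2)=2v+\lambda(2)$ is strictly positive, so the paper has to invoke an additional BRW ingredient: Lemma 3.1 of \cite{Shi:2012} gives $\min_{|u|=j}V(u)\geq vj$ a.s.\ for $j$ large, after which $e^{2(vj-V(u))}\leq e^{\theta^*(vj-V(u))}$ converts the bound into $e^{-2\epsilon j^{1/2}}W_j(\theta^*)$. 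Your interpolation bound $\phi_2(z)\leq C_\theta z^\theta$, valid for $\theta\in(0,2]$ because $z\mapsto z^{-\theta}\phi_2(z)$ is bounded (behaving like $z^{2-\theta}/2$ near $0$ and like $z^{-\theta}$ near $\infty$), produces the estimate $\rmE\calK^{(j)}_{t_j}(2)\leq C_\theta W_j(\theta)\,t_j^{\theta}e^{\lambda(\theta)j}$ in one shot; the choice $\theta=\theta^*$ when $\theta^*\leq 2$ kills the linear exponent via $\theta^* a_\star+\lambda(\theta^*)=0$, and the minimum lemma is never needed. Both arguments then conclude identically: $W_j(\theta)$ is a nonnegative martingale, hence a.s.\ convergent with a.s.\ finite supremum (positivity of $W(\theta^*)$ is irrelevant, as you note), and Markov plus the conditional Borel--Cantelli lemma finishes. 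The trade-off is that you pay with a $\theta$-dependent constant $C_\theta$ (harmless) and gain by avoiding the leftmost-particle estimate, which is arguably a more economical proof.
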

\begin{proof}
By \eqref{eq:mean},  
\[
  \rmE \calK^{(j)}_{t_j}(2)= \sum_{|u|=j} \phi_2(e^{a_\star j-\epsilon j^{1/2}- V(u)}),\quad j\in\N,
\]
where $\phi_2(x) = 1 - e^{-x} - xe^{-x}$ for $x\geq 0$. The inequality $1-e^{-x}\leq x$, $x\geq 0$ implies that  
$\phi_2(x) \leq x^2$, $x\geq 0$.

We first assume that $\theta^* \geq 2$. Then $2a_\star = -\lambda(2)$ and, as a consequence, 
\[
  \rmE \calK^{(j)}_{t_j}(2) \leq \sum_{|u|=j} e^{2 a_\star j-2\epsilon j^{1/2}-2 V(u)}=e^{- 2\epsilon j^{1/2}}W_j(2) ,\quad j\in\N.
\]
Recall that  $(W_j(2),\mathcal{F}_j)_{j \geq 0}$ is a nonnegative martingale. Hence, $W_j(2)$ converges a.s.\ as $j \to \infty$. This entails 
\[
  \sum_{j\geq 1} \rmE \calK^{(j)}_{t_j}(2) < \infty \quad \text{a.s.}
\]
and thereupon
\[
  \lim_{j \to \infty} \calK^{(j)}_{t_j}(2) = 0 \quad \text{a.s.}
\]
by the Markov inequality and the Borel-Cantelli lemma.

We now assume that $\theta^* < 2$, in which case $a_\star = v$ and
\[
  \rmE \calK^{(j)}_{t_j}(2) \leq e^{-2\epsilon j^{1/2}}\sum_{|u|=j} e^{2(v j - V(u))},\quad j\in\N.
\]
By Lemma 3.1 in \cite{Shi:2012} applied to the BRW $(V(u)-v|u|)_{u\in\mathbb{V}}$, $\min_{|u|=j} V(u) \geq v j$ a.s.\ for all $j$ large enough. Using it in combination with $\theta^\ast<2$ and $v=-\lambda(\theta^\ast)/\theta^\ast$ we conclude that
\[
  \rmE \calK^{(j)}_{t_j}(2) \leq e^{-2\epsilon j^{1/2}} W_j(\theta^*)\quad \text{ a.s. for all $j$ large enough.}
\]
As $W_j(\theta^*)$ converges a.s.\ as $j \to \infty$, we deduce once again that $\sum_{j\geq 1} \rmE \calK^{(j)}_{t_j}(2) < \infty$ a.s. Finally, another application of the Markov inequality and the Borel-Cantelli lemma yields
\[
  \lim_{j \to \infty} \calK^{(j)}_{t_j}(2) = 0 \quad \text{a.s.}  \qedhere
\]
\end{proof}

Using Lemma~\ref{lem:lowDensityPoisson} we now prove that a.s.\ no two balls fall into the same box in the low density regime.
\begin{proof}[Proof of Theorem~\ref{thm:occupancy}(I)]
Let $(j_n)_{n\in\N}$ be a sequence of positive integers satisfying $\lim_{n \to \infty}(\log n/j_n) = a$ with $a\in (0, a_\star)$. Also, let $(t_j)_{j\in\N}$ be the sequence defined in Lemma~\ref{lem:lowDensityPoisson}. Then $\lim_{n\to\infty}(t_{j_n}/n)=\infty$, so that $N_{t_{j_n}}\geq n$ a.s.\ for $n$ large enough, by the SLLN for Poisson processes.
Since, for each $j \in \N$, the sequence $(K^{(j)}_n(2))_{n\in\N}$ is a.s. nondecreasing, we obtain a.s.\ for $n$ large enough, $$K^{(j_n)}_n(2)\leq K^{(j_n)}_{N_{t_{j_n}}}(2)=\mathcal{K}^{(j_n)}_{t_{j_n}}(2).$$ Invoking now Lemma~\ref{lem:lowDensityPoisson} we infer $\lim_{n \to \infty} K^{(j_n)}_n(2)=0$ a.s. and thereupon $K^{(j_n)}_n(2)=0$ a.s. for large enough $n$ because, for each $n\in\N$, the random variable $K^{(j_n)}_n(2)$ takes nonnegative integer values. Since, for each $n\in\N$, the sequence $(K^{(j_n)}_n(k))_{k\in\N}$ is a.s. nonincreasing, we infer that, for all $k\geq 2$, $K^{(j_n)}_n(k)=0$ a.s. for $n$ large enough. This in combination with the equality $\sum_{k\geq 1} K^{(j_n)}_n(k)=n$ a.s. which holds for each $n$ leads to the desired conclusion $K^{(j_n)}_n(1) =n$ a.s. for $n$ large enough.

\end{proof}

\subsection{Presaturation levels. Proof of Theorem~\ref{thm:occupancy}(II)}
\label{subsec:belowSaturation}

We consider in this section the number of occupied boxes in the presaturation levels. Since in these levels most balls fall into distinct boxes, it is natural to investigate the asymptotic behavior of the difference between the number of balls and the number of occupied boxes.

\begin{lemma} \label{lem:belowSaturationPoisson}
Let $a\in (a_\star, -\lambda'(1))$ and $t_j(y) = e^{a j + y}$ for $j\in\N$ and $y\in\R$. Pick $\theta > 1$ satisfying $a = -\lambda'(\theta)$.

\noindent {\rm (1)} If $a< -\lambda^\prime(2)$ (equivalently $\theta>2$),
then, for any $C>0$,
\[
  \lim_{j \to \infty} \sup_{|y|\leq Cj^{1/2}} \Big| (t_{j}(y))^{-2} e^{-\lambda(2)j}\rmE\left( N_{t_j(y)} - \calK^{(j)}_{t_j(y)}\right) - \frac{1}{2} W(2)  \Big| = 0 \quad \text{{\rm a.s.}},
\]
\[
  \lim_{j \to \infty} \sup_{|y|\leq Cj^{1/2}} \Big|(t_{j}(y))^{-2} e^{-\lambda(2)j}\rmVar\left( N_{t_j(y)} - \calK^{(j)}_{t_j(y)}\right) - \frac{1}{2} W(2)  \Big| = 0 \quad \text{{\rm a.s.}}
\]
$$\text{and}\quad N_{t_j(y)} - \calK^{(j)}_{t_j(y)}=\frac{W(2)}{2}(t_{j}(y))^2 e^{\lambda (2)j}(1+o(1))\quad \text{{\rm a.s.}~~ as}~~ j \to \infty,$$ uniformly in $y$ satisfying $|y|\leq Cj^{1/2}$.

\noindent {\rm (2)} If $a=-\lambda^\prime(2)$ (equivalently $\theta=2$), then, for any $C>0$,
\begin{multline*}
\lim_{j \to \infty} \sup_{|y|\leq Cj^{1/2}} \Big|(t_{j}(y))^{-2} e^{-\lambda(2)j} \rmE\left( N_{t_j(y)} - \calK^{(j)}_{t_j(y)}\right)\\- \frac{1}{2}  W(2) \Phi\big(-y(\lambda''(2)j)^{-1/2}\big)
\Big| = 0 \quad \text{{\rm a.s.}},
\end{multline*}
\begin{multline*}
  \lim_{j \to \infty} \sup_{|y|\leq Cj^{1/2}} \Big|(t_{j}(y))^{-2} e^{-\lambda(2)j}\rmVar\left( N_{t_j(y)} - \calK^{(j)}_{t_j(y)}\right)\\-\frac{1}{2} W(2) \Phi\big(-y(\lambda''(2)j)^{-1/2}\big)   \Big| = 0 \quad \text{{\rm a.s.}}
\end{multline*}
$$\text{and} \quad N_{t_j(y)} - \calK^{(j)}_{t_j(y)}=\frac{W(2)}{2}\Phi\big(-y(\lambda''(2)j)^{-1/2}\big)(t_{j}(y))^2 e^{\lambda (2)j}(1+o(1))\quad \text{\rm a.s.}$$
as $j\to\infty$, uniformly in $y$ satisfying $|y|\leq Cj^{1/2}$.

\noindent {\rm (3)} If $a>-\lambda^\prime(2)$ (equivalently $\theta<2$), then
\begin{multline*}
  \lim_{j \to \infty} \sup_{y \in \R} \Big| j^{1/2} (t_{j}(y))^{-\theta} e^{-\lambda(\theta)j}\rmE\left( N_{t_j(y)} - \calK^{(j)}_{t_j(y)}\right)\\-\frac{\Gamma(2-\theta)}{\theta(\theta-1)} W(\theta)g_\theta(yj^{-1/2})\Big| = 0 \quad \text{{\rm a.s.}},
\end{multline*}
\begin{multline*}
\lim_{j \to \infty} \sup_{y \in \R} \Big| j^{1/2} (t_{j}(y))^{-\theta} e^{-\lambda(\theta)j}\rmVar\left( N_{t_j(y)} - \calK^{(j)}_{t_j(y)}\right) \\
-(2\theta-2^\theta+1) \frac{\Gamma(2-\theta)}{\theta(\theta-1)} W(\theta)g_\theta(yj^{-1/2})\Big| = 0 \quad \text{{\rm a.s.},}
\end{multline*}
\[
  \text{and} \quad N_{t_j(y)} - \calK^{(j)}_{t_j(y)}=\frac{\Gamma(2-\theta)}{\theta(\theta-1)} W(\theta)g_\theta(yj^{-1/2})j^{-1/2} (t_{j}(y))^\theta e^{\lambda (\theta)j}(1+o(1))\quad \text{\rm a.s.}
\]
as $j \to \infty$, uniformly in $y$ satisfying $|y|\leq Cj^{1/2}$, for any $C>0$. 
\end{lemma}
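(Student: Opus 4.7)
Starting from \eqref{eq:mean} and using the identity $t=t\sum_{|u|=j}e^{-V(u)}$, write
\begin{align*}
\rmE(N_{t_j(y)}-\calK^{(j)}_{t_j(y)}) &= \sum_{|u|=j}\psi\bigl(t_j(y)e^{-V(u)}\bigr),\\
\rmVar(N_{t_j(y)}-\calK^{(j)}_{t_j(y)}) &= \sum_{|u|=j}\psi_2\bigl(t_j(y)e^{-V(u)}\bigr),
\end{align*}
where $\psi(z):=z-1+e^{-z}$, and $\psi_2(z):=z-2ze^{-z}+e^{-z}-e^{-2z}$ is $\rmVar((B-1)_+)$ for $B\sim\mathrm{Pois}(z)$, the variance formula using conditional independence of the $(B_u-1)_+$ given $\mathcal V$. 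Both $\psi$ and $\psi_2$ behave like $z^2/2$ at $0$ and like $z$ at $+\infty$. Since $t\mapsto(B_u(t)-1)_+$ is non-decreasing, $y\mapsto N_{t_j(y)}-\calK^{(j)}_{t_j(y)}$ is non-decreasing, which will allow the final sandwiching step.

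\textbf{Case (3), $\theta\in(1,2)$.} The substitution $V(u)=aj+y-w$ and $\theta a+\lambda(\theta)=-\lambda^*(a)$ (from~\eqref{eqn:formCramer}) give
\[
\sum_{|u|=j}\psi\bigl(e^{aj+y-V(u)}\bigr) = e^{-\lambda^*(a)j+\theta y}\sum_{|u|=j}e^{-\theta V(u)-\lambda(\theta)j}f\bigl(-\lambda'(\theta)j+y-V(u)\bigr),
\]
with $f(z):=e^{-\theta z}\psi(e^z)\sim e^{(1-\theta)z}$ at $+\infty$ and $\sim e^{(2-\theta)z}/2$ at $-\infty$, hence dRi precisely because $1<\theta<2$. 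Two integrations by parts (boundary terms vanish in this range of $\theta$) yield $\int_\R f(z)\dd z=\Gamma(2-\theta)/(\theta(\theta-1))$, and the identical calculation applied to $f_2(z):=e^{-\theta z}\psi_2(e^z)$ (also dRi, by the matching asymptotic behaviour of $\psi_2$) gives $\int_\R f_2(z)\dd z=(2\theta-2^\theta+1)\Gamma(2-\theta)/(\theta(\theta-1))$. Proposition~\ref{prop:bertoinExtended} (uniformly in $y\in\R$) then delivers both asymptotic claims, using $t_j(y)^\theta e^{\lambda(\theta)j}=e^{-\lambda^*(a)j+\theta y}$.

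\textbf{Cases (1) and (2), $\theta\geq 2$.} Here the function $f$ above is not dRi at $-\infty$, so we do the Biggins rescaling at inverse temperature $2$ rather than $\theta$:
\[
\sum_{|u|=j}\psi\bigl(e^{aj+y-V(u)}\bigr) = t_j(y)^2 e^{\lambda(2)j}\sum_{|u|=j}e^{-2V(u)-\lambda(2)j}h\bigl(aj+y-V(u)\bigr),
\]
with $h(z):=e^{-2z}\psi(e^z)$ bounded and continuous, $h(-\infty)=1/2$, $h(+\infty)=0$. Decompose $h=\tfrac12\1_{(-\infty,0]}+\tilde g$; the Taylor expansion of $\psi$ at $0$ and its linear growth at $+\infty$ show that $\tilde g$ is bounded, a.e.\ continuous and dRi. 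Splitting $\tilde g=\tilde g^+-\tilde g^-$ and applying Proposition~\ref{prop:bertoinExtended} at inverse temperature $2$, the $\tilde g$-contribution to the inner sum is of order $j^{-1/2}g_2\bigl((y+(a+\lambda'(2))j)/\sqrt j\bigr)\,W(2)\int\tilde g^\pm$; in case~(1) the shift $a+\lambda'(2)<0$ sends this Gaussian argument to $-\infty$ at rate $\sqrt j$, making the term exponentially small in $j$, while in case~(2) the shift vanishes and the term is $O(j^{-1/2})$. For the indicator piece, in case~(1) apply Corollary~\ref{lem:aux22} with $\delta=a\in(0,-\lambda'(2))$ to obtain the limit $W(2)$ uniformly in $|y|\leq C\sqrt j$; in case~(2), apply Proposition~\ref{prop:cltBrw} to the bounded, a.e.\ continuous function $\1_{(-\infty,0]}$ at the argument $(-\lambda'(2)j-V(u))/\sqrt j+y/\sqrt j$, giving the a.s.\ limit $W(2)\int_{y/\sqrt j}^\infty g_2(x)\dd x=W(2)\Phi(-y/\sqrt{\lambda''(2)j})$ uniformly for $|y|\leq C\sqrt j$. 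Multiplying by $\tfrac12 t_j(y)^2 e^{\lambda(2)j}$ yields the claimed expressions for $\rmE$. The variance claims follow verbatim from the same decomposition applied to $e^{-2z}\psi_2(e^z)$, since its limits at $\pm\infty$ are also $1/2$ and $0$.

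\textbf{From conditional mean and variance to almost sure asymptotics.} In every case, the conditional Bienaymé--Chebyshev inequality gives $\rmP(|X-\rmE X|>\epsilon\rmE X\mid\mathcal V)\leq\rmVar X/(\epsilon^2(\rmE X)^2)$ for $X:=N_{t_j(y)}-\calK^{(j)}_{t_j(y)}$, and the explicit expressions above show that $\rmE X$ diverges exponentially in $j$ (since $2a+\lambda(2)>0$ throughout case~II, resp.~$-\lambda^*(a)>0$ in case~(3)) while $\rmVar X/\rmE X$ stays bounded, hence $\rmVar X/(\rmE X)^2\to 0$ exponentially fast a.s. Borel--Cantelli applied over $j\in\N$ and over a grid of $y$-values in $[-C\sqrt j,C\sqrt j]$ of polynomial spacing gives pointwise a.s.\ equivalence $X\sim\rmE X$ at each grid point; monotonicity of $y\mapsto X$ sandwiches arbitrary $y$ between adjacent grid points, and the uniform continuity on $[-C,C]$ of the limiting deterministic expressions viewed as functions of $y/\sqrt j$ closes the sandwich. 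The principal technical obstacle is not the Biggins-type inputs themselves but rather ensuring that every estimate is uniform over this $\sqrt j$-window in $y$—this is precisely the strength of Corollary~\ref{lem:aux22} and the uniform-in-$y$ content of Propositions~\ref{prop:bertoinExtended} and~\ref{prop:cltBrw}—so that sandwiching between neighbouring grid points actually yields the claimed asymptotic equivalence.
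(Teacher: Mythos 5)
Your proof is correct, and for cases~(1) and~(2) it takes a genuinely different—and arguably cleaner—route than the paper's.

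For case~(3), $\theta\in(1,2)$, you do essentially what the paper does: Biggins rescaling at inverse temperature $\theta$, integration by parts for the constants, and Proposition~\ref{prop:bertoinExtended} applied to $e^{-\theta z}\psi(e^z)$. For cases~(1) and~(2), $\theta\geq 2$, however, your decomposition diverges from the paper's. The paper Taylor-expands $m(x)=x-1+e^{-x}=\sum_{i\geq 2}(-x)^i/i!$ to order $\vartheta-1=\lfloor\theta\rfloor-1$, separates out the finitely many polynomial contributions $\frac{(-1)^i}{i!}e^{i(aj+y)+\lambda(i)j}W(i)$, shows the $i=2$ term dominates via the monotonicity of $s\mapsto as+\lambda(s)$ on $[2,\theta]$, and then performs an indicator-plus-dRi split on the Taylor remainder $\bar m_\vartheta$. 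You instead rescale \emph{directly} at inverse temperature $2$ and exploit the observation that $h(z)=e^{-2z}\psi(e^z)$ has limits $\tfrac12$ and $0$ at $\mp\infty$, so that $h=\tfrac12\1_{(-\infty,0]}+\tilde g$ with $\tilde g$ dRi (after the $\pm$-splitting required since Proposition~\ref{prop:bertoinExtended} wants nonnegative $f$; note that $\tilde g\leq 0$ on $(-\infty,0)$ and $\tilde g\geq 0$ on $[0,\infty)$, both pieces monotone and exponentially decaying). Your observation that in case~(1) the shifted Gaussian argument $yj^{-1/2}+(a+\lambda'(2))j^{1/2}$ runs off to $-\infty$—killing the $\tilde g$ contribution even more brutally than the mere $j^{-1/2}$ prefactor—is the right explanation and replaces the paper's dominance argument. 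This buys you a unified treatment of all $\theta\geq 2$ without the $\vartheta$-dependent bookkeeping of the paper, at the cost of a slightly less explicit picture of the subleading corrections $W(3),W(4),\ldots$ that the paper's expansion would display. Your final Borel--Cantelli step, using a polynomial grid in $y$ plus monotonicity of $y\mapsto N_{t_j(y)}-\calK^{(j)}_{t_j(y)}$, is also a bit more careful than the paper's, which only controls the deviation along one fixed sequence $(y_j)$ and hence does not quite deliver the stated uniformity over $|y|\leq Cj^{1/2}$; your grid-sandwich argument does.
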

\begin{proof}
Let $\eta_\lambda$ be a random variable with the Poisson distribution of mean $\lambda>0$. Then
\[
  \E(\eta_\lambda- \1_{\{\eta_\lambda\geq 1\}}) = m(\lambda) \quad \text{and} \quad \Var(\eta_\lambda-\1_{\{\eta_\lambda\geq 1\}}) = v(\lambda),
\]
where $m(x):=x -1+e^{-x}$ and $v(x):=x + e^{-x} - e^{-2x} - 2 xe^{-x}$ for $x\geq 0$. In view of these formulas, for $j \in \N$ and $y \in \R$,
\begin{equation}\label{eqn:diffBinsBalls}
\rmE\big( N_{t_j(y)} - \calK^{(j)}_{t_j(y)}\big) = \sum_{|u|=j} m\big(e^{aj+y-V(u)}\big)
\end{equation}
and
\begin{equation}\label{eqn:diffBinsBallsVar}
\rmVar\big( N_{t_j(y)} - \calK^{(j)}_{t_j(y)}\big)=\sum_{|u|=j} v\big(e^{aj+y-V(u)}\big).
\end{equation}

\noindent {\sc Case $\theta\geq 2$}. For typographical ease, we write $\vartheta$ for the integer part of $\theta$. We start with
\begin{align}\nonumber
\rmE\big( N_{t_j(y)} - \calK^{(j)}_{t_j(y)}\big) &= \sum_{i=2}^{\vartheta-1} \sum_{|u|=j}\frac{(-1)^i e^{i(aj+y-V(u))}}{i!}+ \sum_{|u|=j} \bar{m}_\vartheta(e^{aj+y-V(u)})\\
&=\sum_{i=2}^{\vartheta-1} \frac{(-1)^i}{i!} e^{i(aj+y)+ \lambda(i)j}W(i) + \sum_{|u|=j} \bar{m}_\vartheta(e^{aj+y-V(u)}), \label{eqn:decomp}
\end{align}
where $\bar{m}_\vartheta(x):=e^{-x} - \sum_{i=0}^{\vartheta-1} \frac{(-x)^i}{i!}$ for $x\geq 0$. By convention, the empty sum $\sum_{i=2}^1 \ldots$ is equal to $0$, if $\theta \in (2,3)$. Further,
\begin{multline*}
\sum_{|u|=j} \bar{m}_\vartheta(e^{aj+y-V(u)})
=\frac{(-1)^\vartheta}{\vartheta!}e^{\vartheta(aj+y)+\lambda(\vartheta)j} \sum_{|u|=j}e^{-\vartheta V(u)-\lambda(\vartheta)j}\1_{\{V(u) \geq aj+y\}}\\ + e^{\theta(aj+y) + \lambda(\theta)j}
\sum_{|u|=j} e^{-\theta V(u)-\lambda(\theta)j}\hat{m}_\vartheta(e^{aj + y - V(u)}),
\end{multline*}
where $\hat{m}_\vartheta(x):=x^{-\theta}\big(e^{-x} - \sum_{i=0}^{\vartheta-1} \frac{(-x)^i}{i!} - (-1)^\vartheta \frac{x^\vartheta}{\vartheta!}\1_{[0, 1]}(x)\big)$ for $x>0$.
The function $x\mapsto \hat{m}_\vartheta(e^x)$ is Lebesgue integrable on $\R$. For instance,
\[0\leq \int_{-\infty}^0 (-1)^{\vartheta+1}\hat{m}_\vartheta(e^x){\rm d}x=\int_0^1(-1)^{\vartheta+1} x^{-\theta-1}\Big(e^{-x}-\sum_{i=0}^\vartheta \frac{(-x)^i}{i!}\Big) {\rm d}x<\infty\]
because
\[
  (-1)^{\vartheta+1}x^{-\theta-1}\Big(e^{-x}-\sum_{i=0}^\vartheta \frac{(-x)^i}{i!}\Big)~\sim~ \frac{1}{(\vartheta+1)!x^{\{\theta\}}},\quad x\to 0+,
\]
where $\{\theta\}$ is the fractional part of $\theta$. Further, the function $x\mapsto (-1)^\vartheta e^{\theta x} \hat{m}_\vartheta(e^x)$ is nonnegative and increasing on $[0,\infty)$, and the function $x\mapsto (-1)^{\vartheta+1} e^{\theta x}\hat{m}_\vartheta(e^x)$ is nonnegative and increasing on $(-\infty, 0]$. Hence, we conclude with the help of Lemma 9.1 in \cite{Goldie:1991} that the functions $x\mapsto (-1)^\vartheta \hat{m}_\vartheta(e^x)$ and $x\mapsto (-1)^{\vartheta+1}\hat{m}_\vartheta(e^x)$ are dRi on $[0,\infty)$ and $(-\infty, 0]$, respectively. An application of Proposition~\ref{prop:bertoinExtended} with $f(x)=(-1)^\vartheta \hat{m}_\vartheta(e^x)\1_{[0,\infty)}(x)$ and then with $f(x)=(-1)^{\vartheta+1}\hat{m}_\vartheta(e^x)\1_{(-\infty, 0)}(x)$ yields
\begin{multline}\label{eq:aux3}
\lim_{j \to \infty}  \sup_{y \in \R} \Big|  j^{1/2} \sum_{|u|=j} e^{-\theta V(u)-\lambda(\theta)j}
\hat{m}_\vartheta(e^{aj+y-V(u)})\\ -W(\theta) g_\theta(y)\int_0^\infty \hat{m}_\vartheta(e^{-x}) \dd x\Big| = 0 \quad \text{a.s.}
\end{multline}
Assume that $\theta$ is integer, so that $\vartheta=\theta$. Then, by Proposition~\ref{prop:cltBrw} applied to $f(x)=\1_{(-\infty, 0]}(x)$, $x\in\R$, for any $C>0$,
\begin{equation}\label{eq:aux23}
\lim_{j \to \infty}  \sup_{|y|\leq Cj^{1/2}} \Big|  \sum_{|u|=j}e^{-\vartheta V(u)-\lambda(\vartheta)j}
\1_{\{V(u)\geq aj+y\}}-W(\vartheta) \Phi\big(-y (\lambda''(\vartheta)j)^{-1/2}\big) \Big| = 0 \quad \text{a.s.}
\end{equation}
Assume now that $\theta$ is not integer, so that $\vartheta<\theta$ and thereupon $-\lambda^\prime(\vartheta)>-\lambda^\prime(\theta)=a>0$. Here, the left-hand inequality is justified by the fact that $-\lambda^\prime$ is a decreasing function on $(\theta_\ast,\infty)$. Invoking Corollary~\ref{lem:aux22} with $\delta=a=-\lambda^\prime(\theta)\in (0,-\lambda^\prime(\vartheta))$ we obtain, for any $C>0$,
\begin{equation}\label{eq:aux24}
\lim_{j \to \infty} \sup_{|y|\leq Cj^{1/2}} \Big|  \sum_{|u|=j}e^{-\vartheta V(u)-\lambda(\vartheta)j}
\1_{\{V(u)\geq aj+y\}}-W(\vartheta)\Big| = 0 \quad \text{a.s.}
\end{equation}

Combining \eqref{eq:aux3} and \eqref{eq:aux23} we obtain in the case of integer $\theta$ ($\vartheta=\theta$)
\begin{multline}
\lim_{j \to \infty} \sup_{|y|\leq Cj^{1/2}} \Big|e^{-\vartheta(aj+y)-\lambda(\vartheta)j}\sum_{|u|=j} \bar{m}_\vartheta(e^{aj+y-V(u)})\\-\frac{(-1)^\vartheta}{\vartheta!} W(\vartheta)\Phi\big(-y(\lambda''(\vartheta)j)^{-1/2}\big)\Big| = 0 \quad \text{a.s.}\label{eq:later}
\end{multline}
If $\theta$ is not integer, we conclude with the help of \eqref{eq:aux3} and \eqref{eq:aux24} that
\begin{equation}
\lim_{j \to \infty} \sup_{|y|\leq Cj^{1/2}} \Big|e^{-\vartheta(aj+y)-\lambda(\vartheta)j}\sum_{|u|=j} \bar{m}_\vartheta(e^{aj+y-V(u)})-\frac{(-1)^\vartheta}{\vartheta!} W(\vartheta)\Big| = 0 \quad \text{a.s.}\label{eq:later21}
\end{equation}
having utilized the fact that the function $s\mapsto as+\lambda(s)$ is decreasing on $[2,\theta]$.

Getting back to \eqref{eqn:decomp} we first analyze the case $\theta=2$ in which the first summand on the right-hand side of \eqref{eqn:decomp} vanishes. Appealing to \eqref{eq:later} with $\vartheta=2$ we obtain
\begin{multline*}
\lim_{j \to \infty} \sup_{|y|\leq Cj^{1/2}}\Big|e^{-2(aj+y)- \lambda(2)j}\rmE\left( N_{t_j(y)} - \calK^{(j)}_{t_j(y)}\right)\\
-W(2)\Phi\big(-y (\lambda''(2)j)^{-1/2}\big)/2\Big| = 0 \quad \text{a.s.}
\end{multline*}
Assume now that $\theta\in (2,3)$ in which case the first summand on the right-hand side of \eqref{eqn:decomp} still vanishes. Invoking \eqref{eq:later21} with $\vartheta=2$ we arrive at
\begin{equation}
\lim_{j \to \infty} \sup_{|y|\leq Cj^{1/2}}\Big|e^{-2(aj+y)- \lambda(2)j}\rmE\left( N_{t_j(y)} - \calK^{(j)}_{t_j(y)}\right)-W(2)/2\Big| = 0 \quad \text{a.s.}\label{eq:later22}
\end{equation}
Finally, if $\theta\geq 3$, then the first summand on the right-hand side of \eqref{eqn:decomp} is nonzero. Furthermore, in view of the already mentioned monotonicity of $s\mapsto as+\lambda(s)$ on $[2,\theta]$, its first term which corresponds to $i=2$ dominates. As a consequence, relation \eqref{eq:later22} holds but the driving forces behind this convergence are different from those in the case $\theta\in (2,3)$.

Passing to the analysis of the variance we write $v(x)=m(x)+w(x)$, where $w(x):=1-e^{-2x}-2xe^{-x}$ for $x\geq 0$. Using now \eqref{eqn:diffBinsBalls} and \eqref{eqn:diffBinsBallsVar} we conclude that
$$\rmVar\big( N_{t_j(y)} - \calK^{(j)}_{t_j(y)}\big)=\rmE\big( N_{t_j(y)} - \calK^{(j)}_{t_j(y)}\big)+\sum_{|u|=j}w\big(e^{aj+y-V(u)}\big).$$ 
Hence, in order to show that the variance exhibits (uniformly) the same asymptotics as the mean it is enough to check that, for any $C>0$,
\begin{equation}\label{eq:aux4}
\lim_{j\to\infty} \sup_{|y|\leq Cj^{1/2}} e^{-2(aj+y)- \lambda(2)j}\sum_{|u|=j} w\big(e^{aj+y-V(u)}\big)= 0\quad \text{a.s.}
\end{equation}

Since, as $x\to 0+$, $m(x)\sim x^2/2$ and $w(x)\sim x^3/3$, we infer $$\lim_{x\to 0+}(w(x)/m(x))=0.$$ Thus, given $\varepsilon>0$ there exists $x_0\in\R$ such that $w(e^x)\leq \varepsilon m(e^x)$ whenever $x\leq x_0$. Using this $x_0$ we write
\begin{multline*}
\sum_{|u|=j} w\big(e^{aj+y-V(u)}\big)=\sum_{|u|=j} w\big(e^{aj+y-V(u)}\big)\1_{\{aj+y-V(u)\leq x_0\}}\\+\sum_{|u|=j} w\big(e^{aj+y-V(u)}\big)\1_{\{aj+y-V(u)>x_0\}}.
\end{multline*}
We shall investigate the summands separately:
\begin{multline}\label{eq:aux12}
\sum_{|u|=j} w\big(e^{aj+y-V(u)}\big)\1_{\{aj+y-V(u)\leq x_0\}}\leq \varepsilon \sum_{|u|=j} m\big(e^{aj+y-V(u)}\big)\\~\sim~ \varepsilon W(2)c_je^{2(aj+y)+\lambda(2)j},\quad j\to\infty,
\end{multline}
uniformly in $|y|\leq Cj^{1/2}$, for any $C>0$. Here, $c_j=1/2$ if $\theta=2$ and $c_j=\Phi(-y(\lambda''(2)j)^{-1/2})/2$ if $\theta=2$, and the last limit relation follows from the already proved part for the mean. To analyze the second summand, put $\hat{w}(x):= x^{-\theta}w(x)\1_{(e^{x_0}, \infty)}(x)$ for $x>0$. The function $x \mapsto \hat{w}(e^x)$ is nonnegative. Further, since $\hat{w}(e^x)\sim e^{-\theta x}$ as $x\to\infty$, it is Lebesgue integrable on $\R$. As $x \mapsto e^{\theta x} \hat{w} (e^x)$ is increasing on $\R$, using again Lemma 9.1 in \cite{Goldie:1991}, we conclude that $x \mapsto \hat{w}(e^x)$ is dRi on $\R$. Noting that $$\sum_{|u|=j} w\big(e^{aj+y-V(u)}\big)\1_{\{aj+y-V(u)>x_0\}}=e^{\theta(aj+y)+\lambda(\theta)j}\sum_{|u|=j}\hat{w}\big(e^{aj+y-V(u)}\big),$$ an application of Proposition~\ref{prop:bertoinExtended} with $f(x)=\hat{w}(e^x)$ yields
\begin{multline*}
\lim_{j \to \infty} \sup_{y \in \R} \Big| j^{1/2} e^{-\theta(aj+y)- \lambda(\theta)j}\sum_{|u|=j} w\big(e^{aj+y-V(u)}\big)\1_{\{aj+y-V(u)>x_0\}}\\ - W(\theta) g_\theta(yj^{1/2}) \int_\R \hat{w}(e^x) \dd x \Big|=0\quad \text{a.s.}
\end{multline*}
Recalling that the function $s\mapsto as+\lambda(s)$ is decreasing on $[2,\theta]$ we conclude that
\[
  \lim_{j \to \infty} \sup_{y \in \R} e^{-2(aj+y)- \lambda(2)j}\sum_{|u|=j} w\big(e^{aj+y-V(u)}\big)\1_{\{aj+y-V(u)>x_0\}}=0\quad \text{a.s.}
\]
This in combination with \eqref{eq:aux12} proves \eqref{eq:aux4}.

\noindent {\sc Case $\theta\in (1,2)$}.
We use a representation
\[
  \sum_{|u|=j} m\left( e^{aj+j-V(u)}\right) = e^{\theta (aj+y)+ \lambda(\theta)j} \sum_{|u|=j} e^{-\theta V(u)-\lambda(\theta)j} \frac{m\left( e^{aj+y-V(u)}\right)}{e^{\theta(aj+y-V(u))}}.
\]
Observe that the function $x\mapsto e^{-\theta x}m(e^x)$ is nonnegative and that $$\int_{\R}e^{-\theta x}m(e^x){\rm d}x=\int_0^\infty x^{-\theta-1}(x-1+e^{-x}){\rm d}x=\frac{\Gamma(2-\theta)}{\theta(\theta-1)}$$ which can be checked by repeated integration by parts. Since $x\mapsto m(e^x)$ is an increasing function on $\R$, we conclude that the function $x\mapsto e^{-\theta x}m(e^x)$ is dRi on $\R$ by Lemma 9.1 in \cite{Goldie:1991}. Invoking Proposition~\ref{prop:bertoinExtended} with $f(x)=e^{-\theta x}m(e^x)$ we obtain, a.s.
\[
  \lim_{j \to \infty} \sup_{y \in \R} \left| j^{1/2}\sum_{|u|=j} e^{-\theta V(u) - \lambda(\theta)j} \frac{m\left( e^{aj+y-V(u)}\right)}{e^{\theta(aj+y-V(u))}}-W(\theta)g_\theta(yj^{-1/2}) \int_\R \frac{m(e^x)}{e^{\theta x}} \dd x \right|=0 
\]
and thereupon, a.s.
\[
  \lim_{j \to \infty} \sup_{y \in \R} \left| j^{1/2} e^{-\theta(aj+y)- \lambda(\theta)j}\rmE\left( N_{t_j(y)} - \calK^{(j)}_{t_j(y)}\right)-\frac{\Gamma(2-\theta)}{\theta(\theta-1)}W(\theta) g_\theta(yj^{-1/2}) \right|=0.
\]
Recalling the representation $v(x)=m(x)+w(x)$ for $x\geq 0$ and noting that the function $x\mapsto e^{-\theta x}w(x)$ is dRi on $\R$ with
\begin{multline*}
\int_\R e^{-\theta x}w(e^x){\rm d}x=\int_0^\infty x^{-\theta-1}(1-e^{-2x}-2xe^{-x}){\rm d}x=2\int_0^\infty x^{-\theta}(1-e^{-x}){\rm d}x\\-\int_0^\infty x^{-\theta-1}(2x-1+e^{-2x}){\rm d}x=(2\theta -2^\theta)\frac{\Gamma(2-\theta)}{\theta(\theta-1)},
\end{multline*}
another appeal to Proposition~\ref{prop:bertoinExtended} yields
\begin{multline*}
\lim_{j \to \infty} \sup_{y \in \R} \Big| j^{1/2} e^{-\theta(aj+y)-\lambda(\theta)j}\rmVar\left( N_{t_j(y)} - \calK^{(j)}_{t_j(y)}(1) \right)\\ -W(\theta)
g_\theta(yj^{-1/2}) \int_\R e^{-\theta x}(m(e^x)+w(e^x))\dd x \Big|=0 \quad \text{a.s.}
\end{multline*}

It remains to prove the SLLNs. To this end, we treat the three cases simultaneously and write, for any $\rho > 0$ and $y$ satisfying $|y|\leq Cj^{1/2}$, for any fixed $C>0$ when $\theta\geq 2$ and $y=y_j\in\R$ when $\theta\in (1,2)$,
\begin{multline}
\rmP\left\{ \left| N_{t_j(y)} - \calK^{(j)}_{t_j(y)}-\rmE\left( N_{t_j(y)} - \calK^{(j)}_{t_j(y)}\right) \right|>\rho \rmE\left( N_{t_j(y)} - \calK^{(j)}_{t_j(y)}\right)\right\}\\ \leq \frac{{\rm Var}\,(N_{t_j(y)} - \calK^{(j)}_{t_j(y)})}{\rho^2 (\E(N_{t_j(y)} - \calK^{(j)}_{t_j(y)}))^2}~\sim~\frac{C_\theta}{\rho^2(\E(N_{t_j(y)} - \calK^{(j)}_{t_j(y)}))},\quad j\to\infty.\label{eq:aux5}
\end{multline}
We have used the Bienaymé-Chebyshev inequality and then the already proved asymptotics of the (conditional) mean and variance. Here, $C_\theta=1$ when $\theta\geq 2$ and $C_\theta=2\theta-2^\theta+1$ when $\theta\in (1,2)$. Invoking once again the already proved part of the lemma we conclude that in either of the three cases the right-hand side in \eqref{eq:aux5} is the general term of a convergent series in $j$. Hence, by the Borel-Cantelli lemma, the strong laws of large numbers hold conditionally on the BRW $\mathcal{V}$, hence also unconditionally.
\end{proof}

The SLLNs for the Poissonized version given in Lemma~\ref{lem:belowSaturationPoisson} will now be used to prove the SLLNs for the number of occupied boxes in the presaturation levels of the nested occupancy scheme with $n$ balls.
\begin{proof}[Proof of Theorem~\ref{thm:occupancy}(II)]
Let $a\in (a_\star, -\lambda'(1))$, $b\in\R$. Fix $\delta > 0$ and note that, for each $j \in \N$, the sequence $(n - K^{(j)}_n)_{n\in\N}$ is a.s.\ nondecreasing. Moreover, by the SLLN for Poisson processes, $N_{(1-\delta)n}\leq n\leq N_{(1+\delta)n}$ a.s.\ for $n$ large enough. As a consequence, a.s.\ for $n$ large enough,
\begin{equation}\label{eq:4}
N_{(1-\delta)n} - \calK^{(j_n)}_{(1-\delta)n} \leq n - K^{(j_n)}_n \leq N_{(1+\delta)n} - \calK^{(j_n)}_{(1+\delta)n}.
\end{equation}
From now on, we investigate the three cases separately. Fix any $\gamma>0$.

\noindent {\sc Case $a<-\lambda^\prime(2)$}. We shall apply the SLLN from part (1) of Lemma~\ref{lem:belowSaturationPoisson} with $j=j_n=a^{-1}\log n+O((\log n)^{1/2})$ and $y=\log \gamma-(j_n - a^{-1}\log n)$.
The so defined $y$ satisfies $|y|\leq Cj_n^{1/2}$ for some $C>0$ and large $n$ and $t_{j_n}(y)=\gamma n$. Thus,
$$\lim_{n\to\infty} \frac{N_{\gamma n} - \calK^{(j_n)}_{\gamma n}}{n^2 e^{\lambda(2)j_n}}=\frac{\gamma^2}{2}W(2)\quad \text{a.s.}$$
Using this in combination with \eqref{eq:4} we arrive at \eqref{eq:impo}.

\noindent {\sc Case $a=-\lambda^\prime(2)$}. We shall apply the SLLN from part (2) of Lemma~\ref{lem:belowSaturationPoisson} with
\begin{align*}
  &j=j_n=a^{-1}\log n+a^{-1}b(\log n)^{1/2}+o((\log n)^{1/2})\\
   \text{ and }\quad &y=\log \gamma+b(\log n)^{1/2}-(j_n -a^{-1}\log n-a^{-1}b(\log n)^{1/2} ).
\end{align*}
The so defined $y=y_n$ satisfies $|y|\leq Cj_n^{1/2}$ for some $C>b$ and large $n$ and $t_{j_n}(y)=\gamma n$. Furthermore, $\lim_{n\to\infty} (y_n j_n^{-1/2})=a^{1/2}b$. Hence, $$\lim_{n\to\infty} \frac{N_{\gamma n} - \calK^{(j_n)}_{\gamma n}}{n^2 e^{\lambda(2)j_n}}=\frac{\gamma^2}{2}W(2)\Phi(-b (a/\lambda''(2))^{1/2})\quad \text{a.s.}$$ This together with \eqref{eq:4} proves \eqref{eq:impo1}.

\noindent {\sc Case $a> -\lambda^\prime(2)$}. Using \eqref{eq:4} in combination with the SLLN from part (3) of Lemma~\ref{lem:belowSaturationPoisson} with the same $j$ and $y$ as in the previous case yields \eqref{eq:impo2}.
\end{proof}

\subsection{Saturation levels. Proof of Theorem~\ref{thm:occupancy}(III)}\label{subsec:atSaturation}

We investigate in this section the a.s.\ asymptotic behavior of the number of occupied boxes in the saturation levels. More precisely, the number $n$ of balls thrown and the levels $j=j_n$ satisfy $\log n \sim - \lambda'(1) j_n$ as $n\to\infty$. In this case, a positive fraction of balls share the same box, and the variables $K_n^{(j_n)}$ are no longer proportional to $n$. Levels around the saturation levels of order $\log n/(-\lambda'(1))$ form a transition for the nested occupancy scheme, between the sparse phases (I) and (II) and the dense phase (IV).

Let $k\in\N$, $k\geq 2$ and assume that $\lambda^\prime(k)$ is finite. More generally, a similar transition occurs in the levels of order $\log n/(-\lambda'(k))$ for the number of boxes containing at least $k$ balls. The precise statement is given in Theorem~\ref{thm:occupancyk}.

For $\theta\in (\underline{\theta},\theta^\ast)$, put $\varphi(\theta):=\lambda(\theta)-\theta \lambda^\prime(\theta)$. Recall that the covariance of square integrable random variables $X$ and $Y$ is given by ${\rm Cov}\,(X,Y)=\E XY-\E X \E Y$. We write $\rmCov$ for the conditional covariance given the BRW $\mathcal{V}$. Formula \eqref{eq:covar} which describes the a.s. asymptotic behavior of the conditional covariance is not used in the present paper. It is given for completeness.
\begin{lemma}\label{lem:belowSaturationPoissonk}
Let $k\in \N$ such that $k\in (\underline{\theta}, \theta^\ast)$ and put $t^{(k)}_j(y)=-\lambda^\prime(k)j+y$. Then, for any $C>0$,
\[
  \lim_{j \to \infty} \sup_{|y|\leq Cj^{1/2}} \Big|e^{-(\varphi(k)j+y)} \rmE \calK^{(j)}_{t^{(k)}_j(y)}(k)
- (k!)^{-1} \Phi\big(-y(j \lambda''(k))^{-1/2}\big)W(k) \Big| = 0 \quad \text{{\rm a.s.}},
\]
\[
  \lim_{j \to \infty} \sup_{|y|\leq Cj^{1/2}} \Big| e^{-(\varphi(k)j+y)}\rmVar \calK^{(j)}_{t^{(k)}_j(y)}(k)- (k!)^{-1} \Phi\big(-y(j \lambda''(k))^{-1/2}\big)W(k) \Big| = 0 \quad \text{{\rm a.s.}}
\]
and
\begin{equation}\label{eq:slln}
\calK^{(j)}_{t^{(k)}_j(y)}(k)=(k!)^{-1}W(k)\Phi\big(-y(j \lambda''(k))^{-1/2}\big)e^{\varphi(k)j+y}(1+o(1))\quad \text{{\rm a.s.}~~ as}~~ j \to \infty
\end{equation}
uniformly in $y$ satisfying $|y|\leq Cj^{1/2}$.

Let positive integers $\ell<k$ be such that $\ell, k\in (\theta_\ast, \theta^\ast)$. Then, for any $C>0$,
\begin{multline}\label{eq:covar}
 \lim_{j \to \infty} \sup_{|y|\leq Cj^{1/2}} \Big|e^{-(\varphi(k)j+y)}\rmCov\, (\calK^{(j)}_{t^{(k)}_j(y)}(\ell), \calK^{(j)}_{t^{(k)}_j(y)}(k))\\
- (k!)^{-1} \Phi\big(-y(j \lambda''(k))^{-1/2}\big)W(k) \Big| = 0 \quad \text{{\rm a.s.}}
\end{multline}
\end{lemma}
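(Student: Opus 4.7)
The plan is to follow the blueprint of Lemma~\ref{lem:belowSaturationPoisson}: first establish the almost sure asymptotics of the conditional mean, variance and covariance of $\calK^{(j)}_t(k)$ with $t:=t^{(k)}_j(y)$ by invoking the branching random walk estimates of Section~\ref{sec:asymp}, then upgrade these to an a.s.\ convergence through the Bienaymé--Chebyshev inequality and Borel--Cantelli. Starting from the identity $\rmE\calK^{(j)}_t(k)=\sum_{|u|=j}\phi_k(te^{-V(u)})$ in \eqref{eq:mean}, the key step is the factorisation $\phi_k(e^x)=(k!)^{-1}e^{kx}f_k(x)$, where $f_k(x):=k!\,e^{-kx}\phi_k(e^x)$. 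The small-$z$ expansion of $\phi_k$ at $z=0$ gives $1-f_k(x)=O(e^x)$ as $x\to -\infty$, while the bound $\phi_k\leq 1$ yields $f_k(x)=O(e^{-kx})$ as $x\to +\infty$. Writing $f_k=\1_{(-\infty,0]}+h_k$, the correction $h_k$ is bounded, a.e.\ continuous, and decays exponentially at $\pm\infty$, so the monotonicity of $f_k$ on each half-line combined with Lemma~9.1 in \cite{Goldie:1991} shows that $h_k$ is directly Riemann integrable on $\R$.

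After factoring out the appropriate exponential prefactor, the analysis of $\rmE\calK^{(j)}_t(k)$ reduces to two weighted sums against the measure $Z^{(j)}_k$. The indicator piece $\sum_{|u|=j}e^{-kV(u)-\lambda(k)j}\1_{\{V(u)\geq -\lambda'(k)j+y\}}$ is handled by Proposition~\ref{prop:cltBrw} applied with $\theta=k$ and $f=\1_{(-\infty,0]}$ (bounded and a.e.\ continuous), which yields the a.s.\ limit $W(k)\Phi(-y(j\lambda''(k))^{-1/2})$ uniformly in $|y|\leq Cj^{1/2}$. The remainder, built from the dRi function $h_k$, is handled by Proposition~\ref{prop:bertoinExtended} with $\theta=k$ and $f=h_k$, which produces an $O(j^{-1/2})$ contribution uniformly in $y\in\R$ and is therefore negligible. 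The conditional variance $\sum_{|u|=j}\phi_k(te^{-V(u)})(1-\phi_k(te^{-V(u)}))$ is treated identically: the $\phi_k^2$ piece scales like $z^{2k}$ near $z=0$, so its exponential order in $j$ is strictly smaller than that of $\phi_k$ in the dominant regime $te^{-V(u)}\to 0$, and the variance therefore inherits the leading asymptotic of the mean. For the conditional covariance in \eqref{eq:covar}, the conditional independence of ball counts across distinct boxes gives $\rmCov(\calK^{(j)}_t(\ell),\calK^{(j)}_t(k))=\sum_{|u|=j}\phi_k(te^{-V(u)})(1-\phi_\ell(te^{-V(u)}))$; since $\phi_\ell(z)\to 0$ as $z\to 0^+$, the same analysis applies.

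With the conditional mean and variance of the same exponential order in $j$, the Bienaymé--Chebyshev inequality yields $\rmP(|\calK^{(j)}_t(k)-\rmE\calK^{(j)}_t(k)|>\rho\,\rmE\calK^{(j)}_t(k)\mid\mathcal{V})\leq C/(\rho^2\,\rmE\calK^{(j)}_t(k))$, which is the general term of a convergent series in $j$; Borel--Cantelli then delivers the SLLN \eqref{eq:slln} for any fixed $y$, and uniformity over $|y|\leq Cj^{1/2}$ is recovered via a standard covering argument exploiting the monotonicity of $y\mapsto\calK^{(j)}_{t^{(k)}_j(y)}(k)$ together with continuity of the limit in $y$. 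The main technical obstacle will be the verification of direct Riemann integrability for $h_k$ along with the bookkeeping needed to ensure that the variance asymptotic and the error terms from the Propositions remain of the correct orders uniformly in $y$ of size up to $\sqrt{j}$; the uniform estimates required for this are precisely what Propositions~\ref{prop:bertoinExtended} and~\ref{prop:cltBrw} were formulated to deliver.
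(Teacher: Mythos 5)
Your proposal matches the paper's approach essentially line by line: write $\phi_k(e^x)=(k!)^{-1}e^{kx}\bigl(\1_{(-\infty,0]}(x)+\text{dRi correction}\bigr)$, handle the step part with Proposition~\ref{prop:cltBrw} (giving the $\Phi$-factor) and the dRi correction, split into nonnegative pieces on each half-line, with Proposition~\ref{prop:bertoinExtended}; treat the variance and covariance by observing $\phi_k(1-\phi_k)$ and $\phi_k(1-\phi_\ell)$ have the same leading-order $(k!)^{-1}z^k$ behaviour at $0$; and close the SLLN via Bienaym\'e--Chebyshev and Borel--Cantelli using that the conditional mean and variance are of the same order. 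One minor imprecision: the discarded $\phi_k^2$ piece of the variance is not of strictly smaller \emph{exponential} order in $j$ — after the $z^k$-normalization it has no step at $-\infty$, so Proposition~\ref{prop:bertoinExtended} makes it $O(j^{-1/2})$ times the common exponential $e^{\varphi(k)j+y}$; this polynomial (not exponential) gap is still negligible, so your conclusion stands.
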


\begin{proof}
Recall that, for $k\in\N$, $$\phi_k(x) = 1 - e^{-x} \sum_{i=0}^{k-1} \frac{x^i}{i!},\quad x\geq 0$$ and put
$h_k(x):=e^{-kx} \phi_k(e^x)$ for $x\in\R$.

While dealing with the mean we always assume that integer $k\in (\underline{\theta},\theta^\ast)$. Invoking \eqref{eq:mean} we obtain
\begin{multline*}
\rmE \calK^{(j)}_{t^{(k)}_j(y)}(k)=\sum_{|u|=j}\phi_k(e^{-\lambda^\prime(k)+y-V(u)})=e^{\varphi(k)j+y}\\ \times\Big(\sum_{|u|=j}e^{-kV(u)-\lambda(k)j}h_k(-\lambda^\prime(k)+y-V(u))\1_{\{V(u)\leq -\lambda^\prime(k)+y \}}\\- \sum_{|u|=j}e^{-kV(u)-\lambda(k)j}\Big(\frac{1}{k!}-h_k(-\lambda^\prime(k)+y-V(u))\Big)\1_{\{V(u)>-\lambda^\prime(k)+y\}}\\+\frac{1}{k!}\sum_{|u|=j} e^{-kV(u)-\lambda(k)j}\1_{\{V(u)> -\lambda^\prime(k)+y\}}\Big).
\end{multline*}
The function $h_k$ is nonincreasing on $\R$ because $h_k(\log x)=(k-1)!\int_0^1 e^{-xy}y^{k-1}{\rm d}y$ for $x>0$. Since $h_k(x)\sim e^{-kx}$ as $x\to\infty$, it is Lebesgue integrable on $\R^+$, hence dRi on $\R^+$. An application of Proposition~\ref{prop:bertoinExtended} with $f(x)=h_k(x)\1_{[0,\infty)}(x)$ yields
\begin{multline*}
\lim_{j \to \infty}  \sup_{y \in \R} \Big|  j^{1/2} \sum_{|u|=j}e^{-kV(u)-\lambda(k)j}h_k(-\lambda^\prime(k)j+y-V(u))\1_{\{V(u)\leq -\lambda^\prime(k)j+y\}}\\-W(k)g_k(yj^{-1/2}) \int_0^\infty h_k(x) \dd x\Big| = 0 \quad \text{a.s.}
\end{multline*}
Further, the function $(k!)^{-1}-h_k$ is nonnegative and nondecreasing on $(-\infty, 0)$. In view of $(k!)^{-1}-h_k(x)\sim ((k-1)!(k+1))^{-1}e^x$ as $x\to-\infty$, it is Lebesgue integrable on $(-\infty, 0)$, hence dRi on $(-\infty, 0)$. By another appeal to Proposition~\ref{prop:bertoinExtended}, this time with $f(x)=((k!)^{-1}-h_k(x))\1_{(-\infty, 0)}(x)$,
\begin{multline*}
\lim_{j \to \infty}  \sup_{y \in \R} \Big|j^{1/2} \sum_{|u|=j}e^{-kV(u)-\lambda(k)j}((k!)^{-1}-h_k(-\lambda^\prime(k)j+y-V(u))\1_{\{V(u)>-\lambda^\prime(k)j+y\}}\\-W(k)g_k(yj^{-1/2})\int_{-\infty}^0 ((k!)^{-1}-h_k(x)){\rm d}x\Big| = 0 \quad \text{a.s.}
\end{multline*}
Finally, by Proposition~\ref{prop:cltBrw} with $f(x)=\1_{[0,\infty)}(x)$, for any $C>0$, a.s.
$$\lim_{j\to\infty}\sup_{|y|\leq Cj^{1/2}}\Big|\sum_{|u|=j}e^{-kV(u)-\lambda(k)j}\1_{\{V(u)>-\lambda^\prime(k)j+y\}}-W(k)\Phi\big(-y(j \lambda''(k))^{-1/2}\big)\Big|=0.$$
Combining fragments together we arrive at the claimed asymptotic relation for $\rmE \calK^{(j)}_{t^{(k)}_j(y)}(k)$.

For positive integer $\ell\leq k$, put $$\psi_{\ell,\, k}(x):=\phi_k(x)(1-\phi_\ell(x)),\quad x\geq 0\quad \text{and}\quad m_{\ell,\,k}(x):=e^{-kx} \psi_{\ell,\,k}(e^x) ,\quad x\in\R.$$ Passing to the analysis of the covariance (and the variance) we assume that there exist positive integer $\ell\leq k$ such that $\ell,k\in (\theta_\ast,\theta^\ast)$. We start with
\begin{multline*}
\rmCov\,(\calK^{(j)}_{t^{(k)}_j(y)}(\ell), \calK^{(j)}_{t^{(k)}_j(y)}(k))=\sum_{|u|=j}\psi_{\ell,\,k}(e^{-\lambda^\prime(k)j+y-V(u)})\\=e^{\varphi(k)j+y} \times\Big(\sum_{|u|=j}e^{-kV(u)-\lambda(k)j}m_{\ell,\,k}(-\lambda^\prime j+y-V(u))\1_{\{V(u)\leq -\lambda^\prime(k)j+y\}}\\-
\sum_{|u|=j}e^{-kV(u)-\lambda(k)j}\Big(\frac{1}{k!}-m_{\ell,\,k}(-\lambda^\prime(k)j+y-V(u))\Big)\1_{\{V(u)>-\lambda^\prime(k)j+y \}} \\+\frac{1}{k!}\sum_{|u|=j} e^{-kV(u)-\lambda(k)j}\1_{\{V(u)>-\lambda^\prime(k)j+y\}}\Big).
\end{multline*}
Note that $m_{\ell,\,k}(x)=h_k(x)(1-\phi_\ell(e^x))$ for $x\in\R$. We already know that $h_k$ is nonnegative and nonincreasing on $\R$. Since $\phi_\ell$ is the distribution function of a gamma distribution with parameters $\ell$ and $1$ (a.k.a.\ Erlang's distribution) we conclude that $x\mapsto 1-\phi_\ell(e^x)$ is nonnegative and nondecreasing on $\R$, too. Hence, $m_{\ell,\,k}$ is a nonincreasing function on $\R$. Since $$m_{\ell,\,k}(x)\sim  ((k-1)!)^{-1} \exp(-((k-\ell+1)x+e^x)),\quad x\to\infty,$$ it is Lebesgue integrable on $\R^+$, hence dRi on $\R^+$. By Proposition~\ref{prop:bertoinExtended} with $f(x)=m_{\ell,\,k}(x)\1_{[0,\infty)}(x)$,
\begin{multline*}
\lim_{j \to \infty}  \sup_{y \in \R} \Big|  j^{1/2} \sum_{|u|=j}e^{-kV(u)-\lambda(k)j}m_{\ell,\,k}(-\lambda^\prime(k)j+y-V(u))\1_{\{V(u)\leq -\lambda^\prime(k)j+y\}}\\-W(k)g_k(yj^{-1/2}) \int_0^\infty m_{\ell,\,k}(x) \dd x\Big| = 0 \quad \text{a.s.}
\end{multline*}
The function $(k!)^{-1}-m_{\ell,\,k}$ is nonnegative and nondecreasing on $(-\infty, 0)$. In view of $(k!)^{-1}-m_{\ell,\,k}(x)\sim c_{\ell,\,k} e^x$ as $x\to-\infty$, where $c_{1,\,k}=(2k+1)((k+1)!)^{-1}$ and $c_{\ell,\,k}=((k-1)!(k+1))^{-1}$ for $\ell\geq 2$, it is Lebesgue integrable on $(-\infty, 0)$, hence dRi on $(-\infty, 0)$. Invoking Proposition~\ref{prop:bertoinExtended} once again (with $f(x)=((k!)^{-1}-m_{\ell,\,k}(x))\1_{(-\infty, 0)}(x)$) we infer
\begin{multline*}
\lim_{j \to \infty}  \sup_{y \in \R} \Big|  j^{1/2} \sum_{|u|=j}e^{-kV(u)-\lambda(k)j}((k!)^{-1}-m_{\ell,\,k}(-\lambda^\prime(k)j+y-V(u)))\1_{\{V(u)\leq -\lambda^\prime(k)j+y\}}\\-W(k)g_k(yj^{-1/2}) \int_{-\infty}^0 ((k!)^{-1}-m_{\ell,\,k}(x))\dd x\Big| = 0 \quad \text{a.s.}
\end{multline*}
Finally, by Proposition~\ref{prop:cltBrw} with $f(x)=\1_{[0,\infty)}(x)$, for any $C>0$, a.s.
$$\lim_{j\to\infty}\sup_{|y|\leq Cj^{1/2}}\Big|\sum_{|u|=j}\1_{\{V(u)>-\lambda^\prime(k)j+y\}}e^{-kV(u)-\lambda(k)j}-W(k)\Phi\big(-y(j \lambda''(k))^{-1/2}\big)\Big|=0.$$
Combining fragments together we obtain the claimed formula for the covariance when $\ell<k$ and that for the variance when $\ell=k$.

Formula \eqref{eq:slln} is an immediate consequence of the already proved asymptotics of $\rmE \calK^{(j)}_{t^{(k)}_j(y)}(k)$ and $\rmVar \calK^{(j)}_{t^{(k)}_j(y)}(k)$, the Bienaymé-Chebyshev inequality and the Borel-Cantelli lemma in conjunction with the a.s.\ convergence of the series $\sum_{j\geq 1}(\rmE \calK^{(j)}_{t^{(k)}_j(y)}(k))^{-1}$.
\end{proof}

With Lemma~\ref{lem:belowSaturationPoissonk} at hand, we are ready to prove Theorem~\ref{thm:occupancyk}.
\begin{proof}[Proof of Theorem~\ref{thm:occupancyk}]
Let $b\in\R$ and $(j_n)_{n\in\N}$ be a sequence of positive integers satisfying, as $n\to\infty$, $j_n=(-\lambda^\prime(k))^{-1}\log n-b(\log n)^{1/2}+O((\log n)^{1/2})$.
For each $j \in \N$ and each $\ell\in\N$, the sequence $(K^{(j)}_n(\ell))_{n\in\N}$ is nondecreasing. Fix $\delta > 0$. Invoking the SLLN for Poisson processes and the aforementioned monotonicity we conclude that a.s.\ for $n$ large enough,
\begin{equation}\label{eq:4444}
\calK^{(j_n)}_{(1-\delta)n}(k) \leq K^{(j_n)}_n (k) \leq \calK^{(j_n)}_{(1+\delta)n}(k).
\end{equation}

Fix $\gamma>0$. We intend to apply formula \eqref{eq:slln} with $j=j_n=(-\lambda^\prime(k))^{-1}\log n+(-\lambda^\prime(k))^{-1}b(\log n)^{1/2}+o((\log n)^{1/2})$ and $y$ such that $t^{(k)}_{j_n}(y)=\gamma n$. The so defined $y=y_n$ satisfies $|y|\leq Cj_n^{1/2}$ for some $C>b$ and large $n$. Also, $\lim_{n\to\infty} (y_n j_n^{-1/2})=(-\lambda^\prime(k))^{1/2}b$. Hence, by \eqref{eq:slln}, as $n\to\infty$,
$$\calK^{(j)}_{\gamma n}(k)=\gamma (k!)^{-1}W(k) \Phi\big(-b((-\lambda^\prime(k))/\lambda''(k))^{1/2}\big)ne^{(\lambda(k)-(k-1)\lambda^\prime(k))j_n} (1+o(1))\quad {\rm a.s.}$$ This in combination with \eqref{eq:4444} proves Theorem~\ref{thm:occupancyk}.
\end{proof}

Part (III) of Theorem~\ref{thm:occupancy} is a particular case of Theorem~\ref{thm:occupancyk} with $k=1$. Note that $\lambda(1)=0$ and $W(1)=1$ a.s.

\subsection{Postsaturation levels. Proof of Theorem~\ref{thm:occupancy}(IV)} \label{subsec:aboveSaturation}

We now turn to the investigation of the number of occupied boxes $K_n^{(j)}$ in the postsaturation levels $j$. In this setting, the a.s.\ asymptotics of $K_n^{(j)}$ is driven, for the most part, by the number of `large' enough boxes. More precisely, assuming that $n$ balls are being thrown, there are $j$th level boxes of three types: (a) boxes with sizes (hitting probabilities) of order smaller than $1/n$ contain $0$ or $1$ ball with high probability; (b) boxes with sizes of order $1/n$ contain a Poisson number of balls with high probability; and (c) boxes with sizes of order larger than $1/n$. In the levels of low density, most boxes are of size $o(1/n)$, whence $K_n^{(j_n)}\approx n$. In the postsaturation levels, the sum of sizes of the aforementioned boxes is asymptotically very small, and most boxes are of large sizes. As a result, most balls fall into the same collection of boxes, and the number of occupied boxes is small with respect to $n$.

The results obtained in this section are an extension of Theorem 1 in \cite{Bertoin:2008}, in which the case $a j_n = \log n + b + o(1)$ is analyzed. We shall treat a more general case $a j_n = \log n + b (\log n)^{1/2}(1 + o(1))$, with $-\lambda'(1) < a < \bar{a}$.
\begin{lemma}\label{lem:poissonAboveSaturation}
Let $a\in (a_c, \bar a)$ and $t_j(y)= e^{aj+y}$ for $j\in\N$ and $y\in\R$. Pick $\theta \in (0,1)$ satisfying $a = -\lambda'(\theta)$.
The following asymptotic relations hold a.s.
\begin{equation*}
\lim_{j \to \infty} \sup_{y \in \R} \Big| j^{1/2} (t_j (y))^{-\theta} e^{-\lambda(\theta)j} \rmE \calK^{(j)}_{t_j(y)}-\frac{\Gamma(1-\theta)}{\theta} W(\theta) 
g_\theta(yj^{-1/2}) \Big| = 0 
\end{equation*}
\begin{equation*}
\lim_{j \to \infty} \sup_{y \in \R} \Big| j^{1/2} (t_j(y))^{-\theta} e^{-\lambda(\theta)j} \rmVar \calK^{(j)}_{t_j(y)}-\frac{(2^\theta-1)\Gamma(1-\theta)}{\theta} W(\theta)g_\theta(yj^{-1/2}) \Big| = 0 
\end{equation*}
\begin{equation}\label{eq:abovesat3}
\text{and} \quad \calK^{(j)}_{t_j(y)}= \frac{\Gamma(1-\theta)}{\theta}W(\theta)g_\theta(yj^{-1/2})j^{-1/2} (t_j (y))^\theta e^{\lambda(\theta)j}(1+o(1))
\end{equation}
as $j \to \infty$, uniformly in $y$ satisfying $|y|\leq Cj^{1/2}$, for any $C>0$. 
\end{lemma}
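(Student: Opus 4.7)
The plan is to follow the same template used in Lemmas~\ref{lem:belowSaturationPoisson} and~\ref{lem:belowSaturationPoissonk}: derive the a.s.\ asymptotics of $\rmE\calK^{(j)}_{t_j(y)}$ and $\rmVar\calK^{(j)}_{t_j(y)}$ through Proposition~\ref{prop:bertoinExtended}, then pass to the SLLN~\eqref{eq:abovesat3} by a conditional Bienaymé--Chebyshev bound combined with Borel--Cantelli. The hypothesis $a\in(a_c,\bar{a})$ forces $\theta\in(\max(\theta_\ast,0),1)\subset(\theta_\ast,\theta^\ast)$, so all branching random walk estimates of Section~\ref{sec:asymp} apply at this value of $\theta$.

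Starting from~\eqref{eq:mean} with $\phi_1(x)=1-e^{-x}$, I would rewrite
\[
  \rmE\calK^{(j)}_{t_j(y)}=e^{\theta(aj+y)+\lambda(\theta)j}\sum_{|u|=j}e^{-\theta V(u)-\lambda(\theta)j}\,f\bigl(-\lambda'(\theta)j+y-V(u)\bigr),
\]
with $f(x):=e^{-\theta x}(1-e^{-e^x})$. Since $f\geq 0$, $f(x)\sim e^{(1-\theta)x}$ as $x\to-\infty$ and $f(x)\sim e^{-\theta x}$ as $x\to+\infty$, the function $f$ is Lebesgue integrable on $\R$ because $0<\theta<1$. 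Moreover $e^{\theta x}f(x)=1-e^{-e^x}$ is nondecreasing, so Lemma~9.1 in~\cite{Goldie:1991} yields that $f$ is dRi on $\R$. The substitution $u=e^x$ followed by integration by parts gives
\[
  \int_\R f(x)\,\dd x=\int_0^\infty u^{-\theta-1}(1-e^{-u})\,\dd u=\frac{\Gamma(1-\theta)}{\theta},
\]
and Proposition~\ref{prop:bertoinExtended} then delivers the first claim of the lemma uniformly in $y\in\R$.

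For the variance the analogous rewriting reads
\[
  \rmVar\calK^{(j)}_{t_j(y)}=e^{\theta(aj+y)+\lambda(\theta)j}\sum_{|u|=j}e^{-\theta V(u)-\lambda(\theta)j}\,g\bigl(-\lambda'(\theta)j+y-V(u)\bigr),
\]
with $g(x):=e^{-\theta x}(e^{-e^x}-e^{-2e^x})=e^{-\theta x}\phi_1(e^x)(1-\phi_1(e^x))$. The main technical obstacle lives here: the product $\phi_1(e^x)(1-\phi_1(e^x))$ is hump-shaped in $x$, so $e^{\theta x}g(x)$ is not monotone on either half-line and Lemma~9.1 in~\cite{Goldie:1991} does not apply to $g$ directly. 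I would bypass this via the algebraic identity $g(x)=2^\theta f(x+\log 2)-f(x)$. Both $f$ and the shifted $f(\cdot+\log 2)$ are dRi on $\R$ with integral $\Gamma(1-\theta)/\theta$, so applying Proposition~\ref{prop:bertoinExtended} to each and taking a linear combination gives
\[
  j^{1/2}\sum_{|u|=j}e^{-\theta V(u)-\lambda(\theta)j}g\bigl(-\lambda'(\theta)j+y-V(u)\bigr)= W(\theta)\bigl[2^\theta g_\theta\bigl((y+\log 2)j^{-1/2}\bigr)-g_\theta(yj^{-1/2})\bigr]\frac{\Gamma(1-\theta)}{\theta}+o(1)
\]
uniformly in $y\in\R$. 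The global Lipschitz continuity of $g_\theta$ absorbs the shift at cost $O(j^{-1/2})$, so the bracket may be replaced by $(2^\theta-1)g_\theta(yj^{-1/2})$, producing the stated variance asymptotic with total integral $(2^\theta-1)\Gamma(1-\theta)/\theta$.

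Finally, for~\eqref{eq:abovesat3} I would apply Bienaymé--Chebyshev conditionally on $\mathcal{V}$:
\[
  \rmP\bigl\{|\calK^{(j)}_{t_j(y)}-\rmE\calK^{(j)}_{t_j(y)}|>\rho\,\rmE\calK^{(j)}_{t_j(y)}\bigr\}\leq \frac{\rmVar\calK^{(j)}_{t_j(y)}}{\rho^2\bigl(\rmE\calK^{(j)}_{t_j(y)}\bigr)^2}\sim\frac{2^\theta-1}{\rho^2\,\rmE\calK^{(j)}_{t_j(y)}}.
\]
For $|y|\leq Cj^{1/2}$ the conditional mean is of order $j^{-1/2}e^{-\lambda^\ast(a)j+\theta y}$ with $|\theta y|=o(j)$, and since $\lambda^\ast(a)=\theta\lambda'(\theta)-\lambda(\theta)<0$ on $(0,\theta^\ast)$ (the derivative $\theta\lambda''(\theta)$ is positive on that interval and the expression vanishes at $\theta^\ast$), the above upper bound is the general term of an a.s.\ summable series in $j$. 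Borel--Cantelli then produces~\eqref{eq:abovesat3} conditionally on $\mathcal{V}$, and hence unconditionally; the uniformity in $|y|\leq Cj^{1/2}$ is handled, as in the earlier lemmas, by combining the uniform mean/variance estimates with the monotonicity of $y\mapsto\calK^{(j)}_{t_j(y)}$ on a sufficiently fine grid.
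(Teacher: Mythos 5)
Your proof is correct and follows essentially the same path as the paper: rewrite the conditional mean and variance in the form required by Proposition~\ref{prop:bertoinExtended}, verify direct Riemann integrability via Lemma~9.1 in \cite{Goldie:1991}, evaluate the two Gamma integrals, and then pass to the SLLN through the conditional Bienaymé--Chebyshev bound and Borel--Cantelli. The one superficial difference is in the variance step: the paper writes the integrand $v(x)=e^{-e^x}(1-e^{-e^x})e^{-\theta x}$ as $v_1-v_2$ with $v_1(x)=(1-e^{-2e^x})e^{-\theta x}$ and $v_2(x)=(1-e^{-e^x})e^{-\theta x}$ and applies the proposition to each, whereas you recognise $v_1=2^{\theta}f(\cdot+\log 2)$ and $v_2=f$ and then absorb the $\log 2$ shift using the Lipschitz continuity of $g_\theta$. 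These are literally the same decomposition; moreover, since $f(\cdot+\log 2)$ is itself dRi with the same integral, you could have applied Proposition~\ref{prop:bertoinExtended} to it directly and avoided the Lipschitz step entirely, which would make your route coincide verbatim with the paper's.
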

\begin{proof}
We start by noting that, for $j\in\N$ and $y\in\R$, $$\rmE \calK^{(j)}_{t_j(y)}= \sum_{|u|=j} \left(1 - e^{-t_j(y) e^{-V(u)}}\right).$$
We intend to apply Proposition~\ref{prop:bertoinExtended}. To this end, we use an alternative representation
\[
  \rmE \calK^{(j)}_{t_j(y)}(1)= e^{\lambda(\theta)j} e^{\theta (a j + y)}\sum_{|u|=j}e^{-\theta V(u) - \lambda(\theta)j}m(aj+y-V(u)),
\]
where $m(x):=(1 - e^{-e^x}) e^{-\theta x}$ for $x\in\R$. Integration by parts yields
\[
  \int_\R m(x) \dd x = \int_0^\infty x^{-\theta - 1}(1 - e^{-x}) \dd x =\frac{\Gamma(1-\theta)}{\theta},
\]
where the assumption $\theta \in (0,1)$ has to be recalled. Since the function $x\mapsto e^{\theta x}m(x)$ is increasing on $\R$, we conclude with the help of Lemma 9.1 in \cite{Goldie:1991} that $m$ is dRi on $\R$. Hence, by Proposition~\ref{prop:bertoinExtended} with $f=m$, the asymptotic formula for $\rmE \calK^{(j)}_{t_j(y)}$ holds.

The argument for the variance is similar. We start with
\begin{multline*}
\rmVar \calK^{(j)}_{t_j(y)}=\sum_{|u|=j} \left(1 - e^{-t_j(y) e^{-V(u)}}\right) e^{-t_j(y) e^{-V(u)}}\\=e^{\lambda(\theta)j} e^{\theta (a j + y)}\sum_{|u|=j}e^{-\theta V(u) -  \lambda(\theta)j}v(aj+y-V(u)),
\end{multline*}
where $v(x):= e^{-e^x}(1 - e^{-e^x}) e^{-\theta x}$ for $x\in\R$. Write $v=v_1-v_2$, where $v_1(x):= (1 - e^{-2e^x}) e^{-\theta x}$ and $v_2(x):=
(1 - e^{-e^x}) e^{-\theta x}$ for $x\in\R$. According to the previous part of the proof the functions $v_1$ and $v_2$ are dRi on $\R$. Furthermore,
\[\int_\R v(x) \dd x = \int_\R (v_1(x)-v_2(x))\dd x =\frac{(2^\theta-1)\Gamma(1-\theta)}{\theta}.
\]
Another application of Proposition~\ref{prop:bertoinExtended} to $f=v_1$ and then $f=v_2$ enables us to conclude that the asymptotic formula for $\rmVar \calK^{(j)}_{t_j(y)}$ holds.

SLLN \eqref{eq:abovesat3} follows by the same reasoning as given in the proof of Lemma~\ref{lem:belowSaturationPoissonk}. We only state explicitly that $\sum_{j\geq 1}( \rmE \calK^{(j)}_{t_j(y)})^{-1}<\infty$ a.s.\ which is the most important ingredient of the proof.
\end{proof}
We are ready to prove part (IV) of Theorem~\ref{thm:occupancy}.
\begin{proof}[Proof of Theorem~\ref{thm:occupancy}(IV)]
Let $a\in (a_c, \bar a)$ and $b\in\R$. Fix $\gamma>0$. We shall use relation \eqref{eq:abovesat3} with $j=j_n=a^{-1}\log n+a^{-1}b(\log n)^{1/2}+o((\log n)^{1/2})$ and $y=\log \gamma+b(\log n)^{1/2}-o((\log n)^{1/2})$. Then $t_{j_n}(y)=\gamma n$ and $\lim_{n\to\infty} (y_n j_n^{-1/2})=a^{1/2}b$. Hence, by \eqref{eq:abovesat3}, as $n\to\infty$, $$\calK^{(j)}_{\gamma n}(k)=\gamma^\theta \frac{\Gamma(1-\theta)}{\theta} W(\theta)  g_\theta(a^{1/2}b)j_n^{-1/2} n^\theta e^{\lambda(\theta)j_n} (1+o(1))\quad {\rm a.s.}$$ This in combination with the $k=1$ version of \eqref{eq:4444} proves part (IV) of Theorem~\ref{thm:occupancyk}.
\end{proof}

\subsection{Number of empty boxes at freezing} \label{subsec:freezing}

We assume in this section that $\lambda(0) < \infty$, that is, the mean number of boxes in the first level is finite. As a consequence, $\E Z_j=e^{-\lambda(0)j}$ for $j\in\N$. Furthermore,  
\[
W_j(0)= e^{-\lambda(0)j}Z_j~\to~W(0),\quad j\to\infty \quad \text{a.s.}
\]
If the number of balls thrown is $n = e^{a j + y}$ with $a > -\lambda(0)$, then most available boxes in the $j$th level will be occupied. Hence, to obtain a tight estimate on the number of occupied boxes in the $j$th level, one has to investigate the asymptotic behavior of $L_n^{(j)}$ the number of empty boxes in the $j$th level. Put $\mathcal{L}^{(j)}_t:=L^{(j)}_{N_t}$, where $(N_t)_{t\geq 0}$ is the same Poisson process as before.
\begin{lemma}\label{lem:free}
Let $a\in (-\lambda^\prime(0), \bar{a}_-)$ and $t_j(y) = e^{a j + y}$ for $j\in\N$ and $y\in\R$. Pick $\theta\in (\theta_\ast, 0)$ satisfying $a = -\lambda'(\theta)$.
Then
\[
  \lim_{j \to \infty} \sup_{y \in \R} \Big| j^{1/2} (t_j(y))^{-\theta} e^{-\lambda(\theta)j} \rmE \calL^{(j)}_{t_j(y)}- \Gamma(-\theta)W(\theta)g_\theta(yj^{-1/2})\Big|=0 \quad \text{{\rm a.s.}},
\]
\[
  \lim_{j \to \infty} \sup_{y \in \R} \Big|j^{1/2} (t_j(y))^{-\theta}e^{-\lambda(\theta)j} \rmVar \calL^{(j)}_{t_j(y)}- (1 - 2^\theta)\Gamma(-\theta) W(\theta)g_\theta(yj^{-1/2})\Big|=0  \quad \text{{\rm a.s.}}
\]
$$\text{and}\quad \calL^{(j)}_{t_j(y)}=\Gamma(-\theta)W(\theta)g_\theta(yj^{-1/2})j^{-1/2} (t_j(y))^{\theta}e^{\lambda(\theta)j}(1+o(1))\quad \text{{\rm a.s.}}$$ as $j \to \infty,$ uniformly in $y$ satisfying $|y|\leq Cj^{1/2}$, for any $C>0$. 
\end{lemma}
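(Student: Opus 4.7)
My plan is to follow the Poissonization strategy already used in Lemmas~\ref{lem:belowSaturationPoisson} and~\ref{lem:poissonAboveSaturation}, reducing everything to an application of Proposition~\ref{prop:bertoinExtended} to explicit directly Riemann integrable kernels. By the thinning property of the Poisson process, conditionally on $\mathcal{V}$ box $u$ is empty at time $t$ with probability $e^{-te^{-V(u)}}$, and the emptiness events across distinct $u$'s at level $j$ are independent. Hence
\[
\rmE \calL_t^{(j)} = \sum_{|u|=j} e^{-t e^{-V(u)}} \quad \text{and} \quad \rmVar \calL_t^{(j)} = \sum_{|u|=j} e^{-t e^{-V(u)}}\bigl(1 - e^{-t e^{-V(u)}}\bigr).
\]

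For the mean with $t = t_j(y) = e^{aj+y}$, I factor out $(t_j(y))^\theta e^{\lambda(\theta)j} = e^{\theta(aj+y)+\lambda(\theta)j}$ and write
\[
\rmE \calL_{t_j(y)}^{(j)} = (t_j(y))^\theta e^{\lambda(\theta)j} \sum_{|u|=j} e^{-\theta V(u)-\lambda(\theta)j}\, m(aj+y-V(u)),
\]
where $m(x) := e^{-\theta x - e^x}$. Since $\theta < 0$, the function $x \mapsto e^{\theta x} m(x) = e^{-e^x}$ is nonnegative and decreasing on $\R$, so Lemma~9.1 in \cite{Goldie:1991} (invoked exactly as in the proof of Lemma~\ref{lem:poissonAboveSaturation}) gives that $m$ is directly Riemann integrable on $\R$, and the substitution $u=e^x$ yields $\int_\R m(x)\dd x = \int_0^\infty u^{-\theta-1}e^{-u}\dd u = \Gamma(-\theta)$, using $-\theta>0$. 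Proposition~\ref{prop:bertoinExtended} with $f=m$ then produces the claimed asymptotic for $\rmE \calL_{t_j(y)}^{(j)}$, uniformly in $y\in\R$.

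For the variance, I write the sum as $(t_j(y))^\theta e^{\lambda(\theta)j}\sum_{|u|=j}e^{-\theta V(u)-\lambda(\theta)j}\,v(aj+y-V(u))$ with $v(x) = m(x) - m_2(x)$ and $m_2(x) := e^{-\theta x - 2e^x}$. The same monotonicity argument applied to $e^{\theta x}m_2(x)=e^{-2e^x}$ shows that $m_2$ is dRi on $\R$, and the substitution $u=2e^x$ yields $\int_\R m_2(x)\dd x = 2^\theta\Gamma(-\theta)$, so $\int_\R v(x)\dd x = (1-2^\theta)\Gamma(-\theta)$. Two applications of Proposition~\ref{prop:bertoinExtended} (one for $m$, one for $m_2$) deliver the variance asymptotic.

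The SLLN is then extracted by Bienaymé-Chebyshev and Borel-Cantelli, exactly as in the concluding step of Lemma~\ref{lem:belowSaturationPoisson}. Here the crucial input is that, by Property~B in Section~\ref{sect:standing}, $a\in(-\lambda^\prime(0),\bar{a}_-)$ forces $\lambda^\ast(a)<0$, hence $\theta a+\lambda(\theta) = -\lambda^\ast(a)>0$, so that $\rmE\calL_{t_j(y)}^{(j)}$ grows exponentially in $j$ (uniformly for $|y|\leq Cj^{1/2}$). This makes $\sum_j \bigl(\rmE\calL_{t_j(y)}^{(j)}\bigr)^{-1}$ summable a.s., and the concentration step is standard. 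Uniformity in $y$ over $[-Cj^{1/2},Cj^{1/2}]$ is recovered using the fact that $y\mapsto \calL_{t_j(y)}^{(j)}$ is nonincreasing (once a box is hit it stays occupied): discretize the window on a polynomially fine mesh, apply the pointwise SLLN at the mesh points, and sandwich intermediate values between consecutive mesh points, using the already-established uniform mean and variance estimates to control the sandwich error. The main obstacles I foresee are the direct Riemann integrability verifications for $m$ and $m_2$ and the handover from pointwise to uniform convergence for the SLLN, both of which are already present in the paper's earlier lemmas and should transfer verbatim.
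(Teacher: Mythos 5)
Your proposal is correct and takes essentially the same route as the paper: the same Poissonized mean and variance formulas, the same kernels $m(x)=e^{-\theta x-e^x}$ and $v(x)=m(x)-e^{-\theta x-2e^x}$ with the identical monotonicity/Goldie argument for direct Riemann integrability, the same application of Proposition~\ref{prop:bertoinExtended}, and the same Bienaym\'e--Chebyshev plus Borel--Cantelli concentration step with the observation that Property~B forces $\theta a+\lambda(\theta)=-\lambda^*(a)>0$ so the conditional mean grows exponentially. Your extra remarks about a mesh discretization to promote the pointwise SLLN to uniform convergence spell out something the paper leaves implicit (it merely refers back to Lemma~\ref{lem:belowSaturationPoisson}), but the underlying argument is the same.
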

\begin{proof}
Observe that, for $j \in \N$ and $y \in \R$,
\begin{multline*}
\rmE \calL^{(j)}_{t_j(y)}= \sum_{|u|=j} \exp\left( - e^{a j+y - V(u)} \right) \quad \text{and} \\
  \rmVar \calL^{(j)}_{t_j(y)}= \sum_{|u|=j} \exp\left( - e^{a j +y - V(u)} \right)\left( 1 - \exp\left(- e^{aj+y-V(u)}\right) \right).
\end{multline*}
We represent the mean in an equivalent form
\[
  \rmE \calL^{(j)}_{t_j(y)}= e^{\theta(aj+y)+ \lambda(\theta)j}\sum_{|u|=j} e^{-\theta V(u) - \lambda(\theta)j} m(aj+y-V(u)),
\]
where $m(x) = e^{-e^x} e^{-\theta x}$ for $x\in\R$. The function $x\mapsto e^{\theta x}m(x)$ is decreasing on $\R$ and \[\int_\R m(x) \dd x = \int_\R e^{-e^x} e^{-\theta x} \dd x = \int_0^\infty x^{-\theta -1} e^{-x} \dd x = \Gamma(-\theta)<\infty.\]
Recalling that $\theta<0$ we conclude that $m$ is dRi on $\R$ by Lemma 9.1 in \cite{Goldie:1991}. Using Proposition~\ref{prop:bertoinExtended} with $f=m$ yields
\[
  \lim_{j \to \infty} \sup_{y \in \R} \Big|j^{1/2}(t_j(y))^{-\theta}e^{-\lambda(\theta)j} \rmE \calL^{(j)}_{t_j(y)}- W(\theta) g_\theta(yj^{-1/2}) \int_\R m(x) \dd x \Big| = 0 \quad \text{a.s.},
\]
thereby proving the claim concerning the (conditional) mean. As far as the (quenched) variance is concerned, we write
\[
  \rmVar \calL^{(j)}_{t_j(y)}= e^{\theta(aj+y)+ \lambda(\theta)j}\sum_{|u|=j} e^{-\theta V(u) - \lambda(\theta)j}v(aj+y-V(u)),
\]
where $v(x) = e^{-e^x}(1-e^{-e^x}) e^{-\theta x}$ for $x\in\R$. The function $v$ is dRi on $\R$ as the difference of two dRi functions and
$$\int_\R v(x){\rm d}x=(1-2^\theta)\Gamma(-\theta)<\infty.$$ Hence, by Proposition~\ref{prop:bertoinExtended} with $f=v$,
\[
  \lim_{j \to \infty} \sup_{y \in \R} \Big|j^{1/2} (t_j(y))^{-\theta} e^{-\lambda(\theta)j} \rmVar \calL^{(j)}_{t_j(y)}- W(\theta) g_\theta(yj^{-1/2}) \int_\R v(x)\dd x \Big| =0 \quad \text{a.s.}
\]

The SLLN for $\calL^{(j)}_{t_j(y)}$ is secured by the already proved uniform asymptotic estimates for the (conditional) mean and variance of $\calL^{(j)}_{t_j(y)}$. See the proof of Lemma~\ref{lem:belowSaturationPoisson} for details.
\end{proof}

Theorem~\ref{thm:occupancyFreezing} will now be proved with the help of Lemma~\ref{lem:free}.
\begin{proof}[Proof of Theorem~\ref{thm:occupancyFreezing}]
Let $a\in (-\lambda^\prime(0), \bar{a}_-)$ and $b\in\R$. Fix $\delta > 0$ and note that, for each $j \in \N$, the sequence $(L^{(j)}_n)_{n\in\N}$ is a.s.\ nonincreasing. By the SLLN for Poisson processes, $N_{(1-\delta)n}\leq n\leq N_{(1+\delta)n}$ a.s.\ for $n$ large enough. Hence, a.s.\ for $n$ large enough,
\begin{equation}\label{eq:128}
\calL^{(j_n)}_{(1+\delta)n} \leq L^{(j_n)}_n \leq \calL^{(j_n)}_{(1-\delta)n}.
\end{equation}
Fix any $\gamma>0$. We shall apply the SLLN for $\calL^{(j)}_{t_j(y)}$ from Lemma~\ref{lem:free} with $j=j_n=a^{-1}\log n+a^{-1}b(\log n)^{1/2}+o((\log n)^{1/2})$ and $y=\log \gamma+b(\log n)^{1/2}-o((\log n)^{1/2})$. The so defined $y=y_n$ satisfies $|y|\leq Cj_n^{1/2}$ for some $C>b$ and large $n$ and $t_{j_n}(y)=\gamma n$. Also, $\lim_{n\to\infty} (y_n j_n^{-1/2})=a^{1/2}b$. Hence, as $n\to\infty$, $$\calL^{(j_n)}_{\gamma n}=\gamma^\theta \Gamma(-\theta)W(\theta)g_\theta(a^{1/2}b) j_n^{-1/2}n^\theta e^{\lambda(\theta)j_n}(1+o(1))\quad \text{a.s.}$$ This in combination with \eqref{eq:128} completes the proof of the theorem.
\end{proof}

\appendix

\section{Appendix: proofs of Propositions~\ref{prop:bertoinExtended} and~\ref{prop:cltBrw} and Corollary~\ref{lem:aux22}}
\label{app:proofProps}

We prove in this section a uniform central limit theorem and a uniform local limit theorem for the sequence of random measures $(Z_\theta^{(j)})_{j\in\N}$ defined, for $\theta\in (\theta_\ast, \theta^\ast)$, by 
\[Z_\theta^{(j)}=\sum_{|u|=j} e^{-\theta V(u) - \lambda(\theta)j}\delta_{V(u)},\quad j\in\N.\]

\subsection{Proof of Proposition~\ref{prop:bertoinExtended}}

For $j \in \N$, $\theta\in (\theta_\ast,\theta^\ast)$ and $y \in \R$, we write $\beta_j(\theta, y):=-\lambda^\prime(\theta)j+y$, so that
\[
  j^{1/2}\sum_{|u|=j}e^{-\theta V(u)-\lambda(\theta) j} f(-\lambda^\prime(\theta)j+y-V(u)) = j^{1/2}\int_{\R^+}f(\beta_j(\theta, y)-x)Z^{(j)}_\theta({\rm d}x).
\]
Our argument is similar to the proof of the key renewal theorem as given on pp.~241--242 in \cite{Resnick:2002}. We proceed via three steps, complicating successively the structure of $f$.

\noindent {\sc Step 1}. Suppose first that, for fixed integer $n$ and fixed $h>0$,
$$f(t)=\1_{[(n-1)h,\,nh)}(t),\quad t\in\R.$$ Then $f(\beta_j(\theta, y)-x)=1$ if, and only if, $x\in
(\beta_j(\theta, y)-nh,\,\beta_j(\theta, y)-(n-1)h]$ which entails, for large $j$, $$j^{1/2}\int_{\R^+}f(\beta_j(\theta, b)-x)Z^{(j)}_\theta({\rm d}x)=
j^{1/2}Z^{(j)}_\theta ((\beta_j(\theta, y)-nh,\, \beta_j(\theta, y)-(n-1)h]).$$ Replacing in \eqref{eqn:bigg1} $h$ with $h/2$, then putting $x=y-nh+h/2$ and exploiting the uniform continuity of $g_\theta$ we conclude that, according to Lemma~\ref{fct:biggStep},
$$\lim_{j \to \infty} \sup_{y \in \R} \Big|j^{1/2}\int_{\R^+}f(\beta_j(\theta, y)-x)Z^{(j)}_\theta({\rm d}x) - W(\theta)g_\theta(j^{-1/2}y)h \Big| = 0 \quad \text{a.s.}$$ Since $\int_\R f(x) \dd x= h$, we have proved that $$\lim_{j \to \infty} \sup_{y \in \R} \left|j^{1/2}\int_{\R^+}f(\beta_j(\theta, y)-x)Z^{(j)}_\theta({\rm d}x) - W(\theta)g_\theta(j^{-1/2}y) \int_\R f(x) \dd x \right| = 0 \quad \text{a.s.}$$

\noindent {\sc Step 2}. Suppose now that $$f(t)=\sum_{n\in\Z} c_n\1_{[(n-1)h,\,nh)}(t),\quad t\in\R,$$ where
$(c_n)_{n\in\Z}$ is a sequence of nonnegative numbers satisfying $\sum_{n\in\Z}c_n<\infty$. Then
\begin{multline*}
j^{1/2}\int_{\R^+}f(\beta_j(\theta,y)-x)Z_\theta^{(j)}({\rm d}x)\\
=j^{1/2}\sum_{n\leq \beta_j(\theta, y)/h} c_n Z^{(j)}_\theta ((\beta_j(\theta, y)-nh,\, \beta_j(\theta, y)-(n-1)h]).
\end{multline*}
Combining the conclusion of Step 1 and the triangular inequality we obtain, for each $m \in \N$,
\begin{multline*}
\lim_{j \to \infty} \sup_{y \in \R} \Big|j^{1/2}\sum_{n=-m}^m c_n Z^{(j)}_\theta ((\beta_j(\theta, y)-nh,\, \beta_j(\theta, y)-(n-1)h])\\
- W(\theta) g_\theta(j^{-1/2}y)\int_{-(m+1)h}^{mh}f(x) \dd x\Big| = 0 \quad \text{a.s.}
\end{multline*}
Given $\varepsilon>0$ choose $m\in \N$ so large that $\sum_{|n|>m} c_n \leq \epsilon$. Using this together with the previous limit relation yields
\begin{multline}
\limsup_{j \to \infty} \sup_{y \in \R} \Big| j^{1/2}\sum_{n=-m}^m c_n Z^{(j)}_\theta ((\beta_j(\theta, y)-nh,\, \beta_j(\theta, y)-(n-1)h])\\
- W(\theta) g_\theta(j^{-1/2}y)\int_\R f(x) \dd x \Big|\leq \epsilon W(\theta) g_\theta(0) \quad \text{a.s.}\label{eq:2}
\end{multline}
Invoking Lemma~\ref{fct:biggStep} with the same $h$ and $x$ as at Step 1 we conclude that a.s.\ there exists $j_0(\epsilon)\in\N$ such that, for all $j \geq j_0(\epsilon)$ and $n \in \N$,
$$j^{1/2} Z^{(j)}_\theta ((\beta_j(\theta, y)-nh,\, \beta_j(\theta, y)-(n-1)h]) \leq h W(\theta) g_\theta(0) + \epsilon.$$ As a consequence,
\begin{multline}
\limsup_{j \to \infty} \sup_{y \in \R}\sum_{|n|>m}c_n Z^{(j)}_\theta ((\beta_j(\theta, y)-nh,\, \beta_j(\theta, y)-(n-1)h])\\\leq \sum_{|n|>m}c_n \left(h W(\theta) g_\theta(0) + \epsilon\right)
\leq \epsilon (h W(\theta) g_\theta(0) + \epsilon)\quad\text{a.s.}\label{eq:3}
\end{multline}
Letting in \eqref{eq:2} and \eqref{eq:3} $\epsilon \to 0+$ we arrive at \eqref{eq:exten}, with $f$ considered at this step.

\noindent {\sc Step 3}.
Let now $f$ be an arbitrary nonnegative dRi function on $\R$. For each $h>0$, put
$$\overline{f}_h(t):=\sum_{n\in\Z}\underset{(n-1)h\leq y<nh}{\sup}\,f(y)\1_{[(n-1)h,\,nh)}(t),\quad t\in\R$$
and
$$\underline{f}_h(t):=\sum_{n\in\Z}\underset{(n-1)h\leq y<nh} {\inf}\,f(y)\1_{[(n-1)h,\,nh)}(t),\quad t\in\R.$$
By the definition of direct Riemann integrability,
$$\sum_{n\in\Z} \underset{(n-1)h\leq y<nh}{\sup}\,f(y)<\infty \quad \text{and}\quad \sum_{n\in\Z}\underset{(n-1)h\leq y<nh}{\inf}\,f(y)<\infty$$
for each $h>0$. Thus, the functions $\overline{f}_h$ and $\underline{f}_h$ have the same structure as the functions discussed in Step 2. According to the result of Step 2, for $h>0$,
\[
  \lim_{j \to \infty} \sup_{y \in \R} \left|\int_{\R^+} \overline{f}_h(\beta_j(\theta,y)-x)Z_\theta^{(j)}({\rm d}x) -  W(\theta) g_\theta(yj^{-1/2}) \int_\R \overline{f}_h(x) \dd x \right| = 0\quad \text{a.s.}
\]
and
\[
  \lim_{j \to \infty} \sup_{y \in \R} \left|\int_{\R^+} \underline{f}_h(\beta_j(\theta,y)-x)Z_\theta^{(j)}({\rm d}x) -  W(\theta) g_\theta(yj^{-1/2}) \int_\R \underline{f}_h(x) \dd x \right| = 0\quad\text{a.s.}
\]
Now \eqref{eq:exten} follows from these limit relations and the inequality
\begin{multline*}
   \int_{\R^+} \underline{f}(\beta_j(\theta,y)-x) Z_\theta^{(j)}(\dd x) \\
   \leq \int_{\R^+} f(\beta_j(\theta,y)-x) Z_\theta^{(j)}(\dd x) \leq \int_{\R^+} \overline{f}(\beta_j(\theta,y)-x)Z_\theta^{(j)}(\dd x),
\end{multline*}
after noting that $$\int_\R  \overline{f}_h(x){\rm d}x=h\sum_{n\in\Z}\underset{(n-1)h\leq y<nh}{\sup}\,f(y)~\to~\int_\R f(x) \dd x,\quad h\to 0+$$ and
$$\int_\R \underline{f}_h(x){\rm d}x=h\sum_{n\in\Z}\underset{(n-1)h\leq y<nh}{\inf}\,f(y)~\to~\int_\R f(x) \dd x,\quad h\to 0+.$$

\subsection{Proof of Proposition~\ref{prop:cltBrw}}

The proof will be divided into three steps. First, we treat indicator functions, second, Riemann integrable functions of compact supports and third, general bounded, a.e.\ continuous functions.

\noindent {\sc Step 1}. Assume that $f(t)=\1_{[a,b)}(t)$, $t\in\R$ for fixed $a,b\in \R$, $a<b$. We intend to show that
\begin{multline}\label{eq:inter1}
\lim_{j\to\infty} \sum_{|u|=j}e^{-\theta V(u)-\lambda(\theta) j}f\Big(\frac{-\lambda^\prime(\theta)j-V(u)}{j^{1/2}}+ y\Big)=W(\theta) \int_0^{b-a}g_\theta(y-a-x){\rm d}x\\= W(\theta) \int_\R f(y-x) g_\theta(x) \dd x\quad \text{a.s.}
\end{multline}
uniformly in $y\in\R$.

Note that $$\sum_{|u|=j}e^{-\theta V(u)-\lambda(\theta) j} f\Big(\frac{-\lambda^\prime(\theta)j-V(u)}{j^{1/2}}+ y\Big)= \int_\R f\Big(\frac{-\lambda^\prime(\theta)j-x}{j^{1/2}}+y\Big) Z_\theta^{(j)}(\dd x).$$ We shall work with the integral on the right-hand side of this equality and start with a representation
\begin{multline*}
\int_\R f\Big(\frac{-\lambda^\prime (\theta)j-x}{j^{1/2}}+y \Big) Z_\theta^{(j)}(\dd x) \\
= \sum_{n=1}^\floor{(b-a)j^{1/2}} Z_\theta^{(j)}\Big((-\lambda'(\theta)j + y j^{1/2}- t_n^{(j)}, - \lambda^\prime(\theta)j + yj^{1/2}-t_{n-1}^{(j)}] \Big),
\end{multline*}
where $t_n^{(j)}:= aj^{1/2} + n\frac{(b-a)j^{1/2}}{\floor{(b-a)j^{1/2}}}$ for $n\in\N_0$. Using Lemma \ref{fct:biggStep} with $x=yj^{1/2}-(t_n^{(j)}+t_{n-1}^{(j)})/2$ and $h=(t_n^{(j)}-t_{n-1}^{(j)})/2$ in combination with uniform continuity of $g_\theta$ and noting that $$2h=t_n^{(j)}-t_{n-1}^{(j)}=\frac{(b-a)j^{1/2}}{\floor{(b-a)j^{1/2}}}~\to~1,\quad j\to\infty$$ we infer 
\begin{multline*}
\lim_{j \to \infty} \sup_{y \in \R} \Big| j^{1/2}Z_\theta^{(j)}\Big( (- \lambda^\prime(\theta)j + y j^{1/2}-t_n^{(j)}, - \lambda^\prime(\theta)j + yj^{1/2}-t_{n-1}^{(j)}] \Big)\\
   - W(\theta) g_\theta(y-t^{(j)}_n/j^{1/2})\Big| = 0 \quad \text{a.s.}
\end{multline*}
Therefore, given $\epsilon > 0$ there exists an a.s.\ finite $j_0$ such that, for all $j\geq j_0$ and all $y \in \R$,
\begin{multline}\label{eq:inter2}
\int_\R f\Big( \frac{-\lambda'(\theta)j-x}{j^{1/2}}+y\Big) Z_\theta^{(j)}(\dd x) \leq \epsilon (b-a)\\ + W(\theta)j^{-1/2}\sum_{n=1}^{\lfloor (b-a)j^{1/2}\rfloor} g_\theta\Big(y-a-\frac{n(b-a)}{\lfloor (b-a)j^{1/2}\rfloor}\Big).
\end{multline}
Plainly, by the definition of the Riemann integral, $$\lim_{j\to\infty} j^{-1/2}\sum_{n=1}^{\lfloor (b-a)j^{1/2}\rfloor} g_\theta\Big(y-a-\frac{n(b-a)}{\lfloor (b-a)j^{1/2}\rfloor}\Big)=
\int_0^{b-a}g_\theta(y-a-x){\rm d}x.$$ However, we still have to prove the uniformity. To this end, we exploit uniform continuity of $g_\theta$. Putting, for $\delta>0$,
$\omega(\delta):=\sup_{x,y\in\R, |x-y|\leq \delta}\,|g_\theta(x)-g_\theta(y)|$, we write
\begin{multline*}
\sup_{y\in\R}\,\Big|\int_0^{b-a}g_\theta(y-a-x){\rm d}x-j^{-1/2}\sum_{n=1}^{\lfloor (b-a)j^{1/2}\rfloor} g_\theta\Big(y-a-\frac{n(b-a)}{\lfloor (b-a)j^{1/2}\rfloor}\Big)\Big|\\=
\sup_{y\in\R}\,\Big|\sum_{n=1}^{\lfloor (b-a)j^{1/2}\rfloor}\Big(\int_{\frac{(n-1)(b-a)}{\lfloor (b-a)j^{1/2}\rfloor}}^{\frac{n(b-a)}{\lfloor (b-a)j^{1/2}\rfloor}}g_\theta(y-a-x){\rm d}x-j^{-1/2} g_\theta\Big(y-a-\frac{n(b-a)}{\lfloor (b-a)j^{1/2}\rfloor}\Big)\Big)\Big|\\=\sup_{y\in\R}\,\Big|\sum_{n=1}^{\lfloor (b-a)j^{1/2}\rfloor}\Big(\int_{\frac{(n-1)(b-a)}{\lfloor (b-a)j^{1/2}\rfloor}}^{\frac{n(b-a)}{\lfloor (b-a)j^{1/2}\rfloor}}\Big(g_\theta(y-a-x)-g_\theta\Big(y-a-\frac{n(b-a)}{\lfloor (b-a)j^{1/2}\rfloor}\Big)\Big){\rm d}x\\+\Big(\frac{b-a}{\lfloor (b-a)j^{1/2}\rfloor}-\frac{1}{j^{1/2}}\Big)g_\theta\Big(y-a-\frac{n(b-a)}{\lfloor (b-a)j^{1/2}\rfloor} \Big)\Big)\Big|\\ \leq (b-a)\omega\Big(\frac{b-a}{\lfloor (b-a)j^{1/2}\rfloor}\Big)+\Big(\frac{b-a}{\lfloor (b-a)j^{1/2}\rfloor}-\frac{1}{j^{1/2}}\Big)\sup_{x\in\R}\,g_\theta(x)~\to~0,\quad j\to\infty.
\end{multline*}
This in combination with \eqref{eq:inter2} yields
$${\lim\sup}_{j\to\infty} \int_\R f\Big( \frac{-\lambda'(\theta)j-x}{j^{1/2}}+y\Big) Z_\theta^{(j)}(\dd x)\leq W(\theta)\int_0^{b-a}g_\theta(y-a-x){\rm d}x\quad\text{a.s.}$$ uniformly in $y\in\R$. An analogous argument proves the converse inequality for the limit inferior,
whence \eqref{eq:inter1}.

\noindent {\sc Step 2}. Assume that $f$ is a Riemann integrable function on $[a,\,b]$ which is equal to $0$ outside $[a,\,b]$. We claim that relation \eqref{eq:inter1} also holds for such $f$.

To prove this, we note that given $\varepsilon>0$ there exist $n\in\N$ and a partition $a=x_0<x_1<\ldots<x_n=b$ such that
\begin{equation}\label{eq:inter3}
\int_a^b (\overline{f}_n(x)-f(x)){\rm d}x\leq \varepsilon\quad \text{and}\quad \int_a^b (f(x)-\underline{f}_n(x)){\rm d}x\leq \varepsilon,
\end{equation}
where, for $x\in [a,\,b)$, $$\overline{f}_n(x):=\sum_{i=1}^n \max_{x_{i-1}\leq z\leq x_i}\,f(z)\1_{[x_{i-1},\,x_i)}(x) \text{ and } \underline{f}_n(x):=\sum_{i=1}^n \min_{x_{i-1}\leq z\leq x_i}\,f(z)\1_{[x_{i-1},\,x_i)}(x).$$ According to the result of Step 1, uniformly in $y\in\R$,
\begin{equation*}
\lim_{j \to \infty} \Big|\sum_{|u|=j}e^{-\theta V(u) - \lambda(\theta)j}  \underline{f}_n(\tfrac{-\lambda^\prime(\theta)j-V(u)}{j^{1/2}}+ y)- W(\theta) \int_\R \underline{f}_n(x) g_\theta(y-x) \dd x \Big| = 0\quad\text{a.s.}
\end{equation*}
and
\begin{equation*}
\lim_{j \to \infty} \Big| \sum_{|u|=j}e^{-\theta V(u) - \lambda(\theta)j}\overline{f}_n(\tfrac{-j \lambda^\prime(\theta)-V(u)}{j^{1/2}} - y) - W(\theta) \int_\R \overline{f}_n(x) g_\theta(y-x) \dd x \Big| = 0 \quad \text{a.s.}
\end{equation*}
Using
\begin{multline*}
\sum_{|u|=j}e^{-\theta V(u) - \lambda(\theta)j}\underline{f}_n(\tfrac{-\lambda'(\theta)j-V(u)}{j^{1/2}}+ y) \leq \sum_{|u|=j}e^{-\theta V(u) -\lambda(\theta)j} f(\tfrac{-\lambda'(\theta)j-V(u)}{j^{1/2}}+y)\\
\leq \sum_{|u|=j}e^{-\theta V(u) -  \lambda(\theta)j}\overline{f}_n(\tfrac{-\lambda'(\theta)j-V(u)}{j^{1/2}}+ y),
\end{multline*}
we infer
\begin{multline}\label{eq:inter4}
\Big|\sum_{|u|=j}e^{-\theta V(u) - \lambda(\theta)j}f(\tfrac{-\lambda^\prime(\theta)j-V(u)}{j^{1/2}}+ y) - W(\theta) \int_\R f(x) g_\theta(y-x) \dd x\Big|\\\leq
\Big| \sum_{|u|=j}e^{-\theta V(u) - \lambda(\theta)j}\overline{f}_n(\tfrac{-j \lambda^\prime(\theta)-V(u)}{j^{1/2}} - y) - W(\theta) \int_\R \overline{f}_n(x) g_\theta(y-x) \dd x \Big|\\+
\Big| \sum_{|u|=j}e^{-\theta V(u) - \lambda(\theta)j}\underline{f}_n(\tfrac{-j \lambda^\prime(\theta)-V(u)}{j^{1/2}} - y) - W(\theta) \int_\R \underline{f}_n(x) g_\theta(y-x) \dd x \Big|\\+
\int_\R (\overline{f}_n(x)-f(x)) g_\theta(y-x) \dd x+ \int_\R (f(x)-\overline{f}_n(x)) g_\theta(y-x) \dd x.
\end{multline}
Observe that \eqref{eq:inter3} entails $$\int_\R (\overline{f}_n(x)-f(x)) g_\theta(y-x) \dd x\leq g_\theta(0)\varepsilon\quad \text{and}\quad
\int_\R (f(x)-\overline{f}_n(x)) g_\theta(y-x) \dd x\leq g_\theta(0)\varepsilon.$$ With this at hand, letting in \eqref{eq:inter4} $j\to\infty$ and then $\varepsilon\to 0+$ we arrive at \eqref{eq:inter1} with the present $f$.

\noindent {\sc Step 3}. To treat arbitrary bounded, a.e.\ continuous functions $f$, we fix $A>0$, $C\in (0,A)$ and put, for $x \in \R$,
$$f_A(x):=f(x)\1_{[-A,\,A]}(x)\quad \text{and}\quad \bar f_A(x):=f(x)-f_A(x)=f(x)\1_{\R\backslash [-A,\,A]}(x).$$ According to the result of Step 2,
\[
\lim_{j \to \infty}\sup_{y\in\R}\,
\Big| \sum_{|u|=j}e^{-\theta V(u) - \lambda(\theta)j} f_A(\tfrac{-\lambda^\prime(\theta)j-V(u)}{j^{1/2}} - y)- W(\theta) \int_\R f_A(y-x) g_\theta(x) \dd x \Big|=0 \quad \text{a.s.}
\]
Since $$\int_\R f_A(y-x) g_\theta(x) \dd x=\int_{y-A}^{y+A} f(y-x) g_\theta(x) \dd x~\to~\int_\R f(y-x) g_\theta(x) \dd x,\quad A\to\infty,$$ and the last integral is convergent, it suffices to show that
\begin{equation}\label{eq:aux}
\lim_{A\to\infty}{\lim\sup}_{j\to\infty}\sup_{|y|\leq C} \Big| \sum_{|u|=j}e^{-\theta V(u) - \lambda(\theta)j}\bar f_A(\tfrac{-\lambda^\prime(\theta)j-V(u)}{j^{1/2}} - y)\Big|=0\quad\text{a.s.}
\end{equation}
Observe that, for $y\in [-C,\,C]$,
\begin{multline}\label{eq:aux1}
|\bar f_A(\tfrac{-\lambda^\prime(\theta)j-V(u)}{j^{1/2}} - y)|\leq \sup_{x\in\R}|f(x)|\1_{\{|-\lambda^\prime(\theta)j-V(u)-yj^{1/2}|>Aj^{1/2}\}}\\\leq
\sup_{x\in\R}|f(x)|\1_{\{|-\lambda^\prime(\theta)j-V(u)|>(A-C)j^{1/2}\}}.
\end{multline}
Further, since $(W_j(\theta), \mathcal{F}_j)_{j\in\N_0}$ is a nonnegative martingale,
\[
\sum_{|u|=j} e^{-\theta V(u) - j \lambda(\theta)}=W_j(\theta)~\to~ W(\theta),\quad j\to\infty~~\text{a.s.}
\]
Also, according to the result of Step 1,
\[\lim_{j \to \infty} \sum_{|u|=j}\1_{\{|-\lambda^\prime(\theta)j-V(u)|\leq (A-C) j^{1/2}\}}= W(\theta) \P\{|N_\theta|\leq A-C\}\quad \text{a.s.},
\]
where $N_\theta$ is a random variable with density $g_\theta$. Hence, \[\lim_{j \to \infty} \sum_{|u|=j}e^{-\theta V(u) - \lambda(\theta)j}\1_{\{|-\lambda'(\theta)j-V(u)|>(A-C) j^{1/2}\}}= W(\theta) \P\{|N_\theta|>A-C\}\quad \text{a.s.}\] This in combination with \eqref{eq:aux1} proves \eqref{eq:aux}.

\subsection{Proof of Corollary~\ref{lem:aux22}}
On the one hand,
\begin{multline*}
\sum_{|u|=j}e^{-\vartheta V(u)-\lambda(\vartheta) j} \1_{\{V(u)\geq \delta j+yj^{1/2}\}} \leq \sum_{|u|=j}e^{-\vartheta V(u)-\lambda(\vartheta) j}\\= W_j(\vartheta)~\to~W(\vartheta),\quad j\to\infty\quad \text{a.s.}
\end{multline*}
uniformly in $y\in\R$. On the other hand, given $A>0$, for $j\geq (A/(-\lambda^\prime(\vartheta)-\delta))^2$, we infer
\begin{multline*}
\sum_{|u|=j}e^{-\vartheta V(u)-\lambda(\vartheta) j}\1_{\{V(u)\geq \delta j+yj^{1/2}\}}\geq \sum_{|u|=j}e^{-\vartheta V(u)-\lambda(\vartheta) j}\1_{\{V(u)\geq -\lambda^\prime(\vartheta)j+(y-A)j^{1/2}\}}\\~\to~ W(\vartheta)\int_{y-A}^\infty g_\vartheta(x){\rm d}x,\quad j\to\infty\quad \text{a.s.}
\end{multline*}
uniformly in $|y|\leq C$ for any $C>0$. The last limit relation is ensured by Proposition~\ref{prop:cltBrw} with $f(x)=\1_{(-\infty,\, 0]}(x)$, $x\in\R$. It remains to send $A$ to $\infty$.

\paragraph*{Acknowledgements.} B.M. is partially supported by the ANR grant MALIN (ANR-16-CE93-0003) and by LABEX MME-DII.

\end{document}